\newcommand*\diff{\mathop{}\!\mathrm{d}}
\newcommand*\Lip{\mathop{} \mathrm{Lip}}
\def\indep{\perp\!\!\!\perp}
\def\L2{\mathop{\rm L^2(\mu)}\nolimits}
\def\D{\mathrm {d}}
\newcommand{\comments}[1]{} 
\newtheorem{proposition}{Proposition}[section]
\newtheorem{lemma}[proposition]{Lemma}
\newtheorem{theorem}[proposition]{Theorem}
\newtheorem{corollary}[proposition]{Corollary}
\theoremstyle{remark}
\newtheorem{remark}[proposition]{Remark}
\theoremstyle{definition}
\numberwithin{equation}{section}
\begin{document}
\title[GIET from smooth stable foliation of a Derived from pseudo-Anosov map]{Smooth Generalized Interval Exchange Transformations with Wandering Intervals,\\ from explicit Derived from pseudo-Anosov maps}
\author{J{\'e}r{\^o}me Carrand
}
\address{Laboratoire de Probabilit\'es, Statistiques et Mod\'elisation (LPSM),  
CNRS, Sorbonne Universit\'e, Universit\'e de Paris,
4, Place Jussieu, 75005 Paris, France}
\email{jcarrand@lpsm.paris}

\date{\today}
\begin{abstract}
Starting from any pseudo-Anosov map $\varphi$ on a surface of genus $g \geqslant 2$, we construct explicitly a family of Derived from pseudo-Anosov maps $f$ by adapting the construction of Smale's Derived from Anosov maps on the two-torus. This is done by perturbing $\varphi$ at some fixed points. We first consider perturbations at every conical fixed point and then at regular fixed points. We establish the existence of a measure $\mu$, supported by the non-trivial unique minimal component of the stable foliation of $f$, with respect to which $f$ is mixing. In the process, we construct a uniquely ergodic Generalized Interval Exchange Transformation with a wandering interval that is semi-conjugated to a self-similar Interval Exchange Transformation. This Generalized Interval Exchange Transformation is obtained as the Poincar\'e map of a flow renormalized by $f$ which parametrizes stable foliation. When $f$ is $\mathcal{C}^2$, the flow and the Generalized Interval Exchange Transformation are~$\mathcal{C}^1$.
\end{abstract}
\thanks{This work was carried out during a one-year internship under the supervision of Corinna Ulcigrai.
Research supported by Institut f{\"u}r Mathematik,
Universit{\"a}t Z{\"u}rich, Switzerland and by the European Research Council (ERC)
under the European Union's Horizon 2020 research and innovation programme (grant agreement No 787304).}
\maketitle

\section{Introduction}

Since the work of Denjoy \cite{herman1979conjugaison, athanassopoulos2015denjoy}, it is known that every $\mathcal{C}^1$ diffeomorphism of the circle such that the logarithm of its derivative is a function of bounded variation has no wandering interval. There is no analogous result concerning interval exchange transformations. An interval exchange transformation -- IET for short -- is a piece-wise translation bijection, with finitely many branches, of a given base interval, while a generalized interval exchange transformation -- GIET for short -- is a bijection of the interval which is a piece-wise increasing homeomorphism with finitely many branches. These transformations can be seen as generalizations of, respectively, rigid translations and diffeomorphisms of the circle. See for instance the surveys \cite{forni2013introduction,yoccoz2010survey,zorich:hal-00453397}. On the other hand, IET and GIET can also be seen as the first return map of a flow on a surface to an interval. This is the point of view we will adopt.

In fact, there are several counter-examples, including very smooth ones. In \cite{levitt1987feuilletages} Levitt found an example of non-uniquely ergodic affine interval exchange transformation -- AIET for short -- with wandering intervals. Latter, using Rauzy--Veech induction, Camelier and Gutierrez \cite{camelier1997aiet} exhibited a uniquely ergodic AIET with wandering intervals, semi-conjugated to a self-similar IET -- \textit{i.e.} an IET induced by the foliation of a pseudo-Anosov diffeomorphism. Then Bressaud, Hubert and Maass \cite{hubert2010persistence} found a \emph{Galois type} criterion on eigenvalues of a matrix associated to a self-similar IET in order to admit a semi-conjugated AIET with wandering intervals. Finally, Marmi, Moussa and Yoccoz \cite{MMY2010aiet} proved that almost every IET admits a semi-conjugated AIET with a wandering interval.

In this paper, we prove the following result using an explicit construction.
\begin{theorem}\label{thm:main_1}
For all self-similar IET $T_0$, there exists a $\mathcal{C}^1$ GIET $T$ semi-conjugated to $T_0$ such that
\begin{enumerate}[label=(\roman*) ]
\item $T$ has a unique minimal set $\Omega$. This set is a Cantor set and is an attractor for $T$ and $T^{-1}$,
\item $T$ is uniquely ergodic, of unique invariant measure $\nu$ supported by $\Omega$,
\item $T$ has wandering intervals.
\end{enumerate}
Furthermore $T$ can be chosen to be the Poincar\'e map of a $\mathcal{C}^1$ flow of a surface and to have any number of wandering intervals.
\end{theorem}

The above theorem will follow from Proposition~\ref{prop:semiconjugacy}, Theorems~\ref{thm:T_is_minimal} and \ref{thm:omega_attractor}, and Proposition~\ref{prop:Omega_K_cap_gamma}.

The proof relies on a geometric construction initiated by Smale \cite[Section I.9]{smale1967differentiable}. More precisely, we built a transformation of $S_g$, the surface of genus $g$, by perturbing a pseudo-Anosov homeomorphism. We call it derived from pseudo-Anosov, as in \cite{Rodriguez06expansive, Barge11classification}. A pseudo-Anosov map on a surface $S_g$ of genus $g$ can be defined as an element of the homotopy class of a map preserving a flat metric on $S_g$ and locally given (in the natural coordinates associated to the half-translation structure) by the action of a diagonal and hyperbolic matrix of determinant $1$. Furthermore, up to sign, the matrix is constant on $S_g \smallsetminus \Sigma$, where $\Sigma$ denotes the set of conical points. Iterates of a pseudo-Anosov map are also pseudo-Anosov maps -- see \cite{lanneau2017tell} for equivalent definitions. 

The method used is similar to the one to pass from an Anosov map to a derived from Anosov diffeomorphism \cite{smale1967differentiable, Williams70DA, katok1997introduction, coudene2013book} and has already appeared in the literature \cite{Bonatti1998, Rodriguez06expansive, Barge11classification}. That is, we convert a fixed point of a pseudo-Anosov map, either regular or conical, into an attracting fixed point by a perturbation. In fact, in order to prove that the GIET in Theorem~\ref{thm:main_1} is piecewise $C^1$, we give an explicit construction of such a map by generalizing Coud\`ene's one for derived from Anosov maps.

For a large class of parameters, the family of maps obtained are derived from pseudo-Anosov and admits Axiom A attractors. It turns out that the stable manifolds of the constructed derived from pseudo-Anosov $f$ map can be parametrized by a $C^1$ flow $h_t$. This flow can be renormalized by $f$, in a similar fashion Giulietti--Liverani horocyclic flows are renormalized by an Anosov map \cite{GL_2019} -- see also \cite{Butterley2020parabolic} where a parabolic flow is renormalized by a partially hyperbolic map.

\begin{theorem}\label{thm:main_2}
For every Derived from pseudo-Anosov map $f$ constructed as in Section~\ref{sect:first_def_perturb}, there exists a hyperbolic attractor $K$ and a flow $h_t$ on $S_g \smallsetminus \Sigma$ such that
\begin{enumerate}[label=(\roman*) ]
\item $h_t$ is complete on $K$ and $\left. \frac{\mathrm{d}}{\mathrm{d}t}\right|_{t=0} h_t\vert_K$ spans the stable foliation of $f$,
\item $f \circ h_{\lambda t} = h_t \circ f$, where $\lambda >1$ is the expansion factor of the pseudo-Anosov homotopic to $f$,
\item $h_t$ is uniquely ergodic, with unique invariant measure $\mu$ supported by $K$,
\item $K$ is an attractor for future and past for $h_t$, on which $h_t$ is minimal.
\end{enumerate}
The flow $h_t$ and the map $T$ from Theorem~\ref{thm:main_1} are related as follow:
\begin{enumerate}[label=(\roman*) ]
\setcounter{enumi}{4}
\item $h_t$ is the suspension flow of the GIET $T$,
\item The unique invariant measures of $h_t$ and $T$ are related by $\mu = \nu \otimes \lambda$, where $\lambda$ is the Lebesgue measure. Also, $\mu$ is the SRB measure of $f$, for which $f$ is mixing -- results for Axiom A attractors from \cite{Ruelle1976measure} apply.
\end{enumerate}
Furthermore, we can construct $f$ such that $h_t$ is $C^1$.
\end{theorem}

The above theorem will follow from Proposition~\ref{prop:commutation_relation_and_stability_of_K}, Theorem~\ref{thm:unique_ergodicité_(h_t)}, Corollary~\ref{corol:ht_minimal}, Lemma~\ref{lemma:factorization_measure} and Theorem~\ref{thm:SRB_measure}.

\subsection{Organisation of the Paper} The paper is organised as follow: Section~\ref{sect:construction_and_attractor} is devoted to the construction of derived from pseudo-Anosov maps as in Theorem~\ref{thm:main_2}. First we recall the basic ideas of the construction, already present in the works \cite{Bonatti1998, Rodriguez06expansive, Barge11classification}. Then we give an explicit formulation for the map, generalizing Coud\`ene's construction of derived from Anosov map \cite{coudene2013book}. We prove that for appropriate parameters, the constructed map $f$ is indeed derived from pseudo-Anosov. Moreover, we prove the existence of an invariant compact set $K$, and show in Theorem~\ref{thm:K_is_hyperbolic} that this set is hyperbolic by computing explicitly a vector field $v^s$ spanning the stable foliation -- the unstable foliation coincide with the one of the initial pseudo-Anosov map $\varphi$. By construction, $v^s$ satisfies, for all $x$ in $K$, the relation 
\begin{align}\label{eq:commut_diff}
\D_x f \, v^s(x) = \lambda^{-1} v^s(f(x))
\end{align}
where $\lambda > 1$ is the expansion factor of $\varphi$.

Most of the work is carried in Section~\ref{sect:technicalities}. We prove in Theorems~\ref{thm:vs_bounded_continuous} and \ref{thm:vs_lipschitz} that $v^s$ can be extended over $S_g \smallsetminus \Sigma$ into a Lipschitz continuous vector field satisfying (\ref{eq:commut_diff}). Under a stronger assumption occurring in the construction of $f$, we prove that the extension of $v^s$ is $C^1$ (Theorem~\ref{thm:vs_C1}). We also prove that the homotopy between $\varphi$ and $f$ induces a homotopy between $v^s$ and the constant vector field spanning the stable foliation of $\varphi$ (Theorem~\ref{thm:regularity_beta_vs}). By integration of $v^s$, we get a flow $h_t$ that satisfies 
\begin{align}\label{eq:commut_flow}
f \circ h_t(x) = h_{\lambda^{-1}t} \circ f(x)
\end{align} because of (\ref{eq:commut_diff}), for all $x$ in $S_g \smallsetminus \Sigma$ and $t$ in $\mathbbm{R}$ whenever both sides are well defined. Using this commutation relation we deduce that $K$ is connected (Theorem~\ref{thm:K_connected}) and that $f:K \to K$ is topologically transitive (Theorem~\ref{thm:f_transitive_on_K}). A similar commutation relation is used by Butterley--Simonelli \cite{Butterley2020parabolic} where a parabolic flow is renormalized by a partially hyperbolic map on some 3-dimensional manifold.

In Section~\ref{sect:f_is_mixing} we consider the GIET $T$ obtained as the Poincar\'e map of $h_t$ to some transversal interval. Using the homotopy between vector fields, we prove that $T$ follows the same full path as a self-similar IET $T_0$ during the Rauzy--Veech algorithm, hence $T$ is semi-conjugated to $T_0$ by a result of Yoccoz \cite[Proposition 7]{yoccoz2005echanges} -- this proves Theorem~\ref{thm:main_1}. Unique ergodicity of $h_t$ then follows by writing $h_t$ as the suspension flow of $T$. Because of the commutation relation (\ref{eq:commut_flow}) and the usual functional characterization of mixing, $f$ is mixing with respect to the unique invariant measure of $h_t$.

In Section~\ref{sect:perturbation_regular_point}, we state the analogous results of Sections~\ref{sect:technicalities} and \ref{sect:f_is_mixing} in the case where the perturbation of the pseudo-Anosov map done in Section~\ref{sect:construction_and_attractor} is performed at a regular point instead of a conical point.

Finally, in the last section, using extensively Ruelle's results \cite{Ruelle1976measure} on the SRB measure of Axiom A attractors, we prove that $\mu$ is the unique SRB measure of $f^{-1}$ for a $\mathcal{C}^2$ perturbation, and that the correlations decrease exponentially fast for $\mathcal{C}^1$ observables. We also ask whether the result on Ruelle spectrum of linear pseudo-Anosov maps \cite{faure2019ruelle} extends to the present case, and if the asymptotic expansion of the ergodic integrals \cite[Corollary 1.5]{Forni2020equidistribution} applies for $h_t$.

\section{Derived from pseudo-Anosov map with smooth explicit foliations}\label{sect:construction_and_attractor}

The construction of derived from Anosov maps was initiated by Smale \cite{smale1967differentiable} by blowing up the stable manifold of a fixed point of an Anosov map. More precisely, an Anosov map is perturbed in such a way that some hyperbolic fixed point is turn into a sink (or a source). Derived from Anosov transformations are an example of Smale's diffeomorphism. The adaptation of this procedure to the setting of pseudo-Anosov maps was already known since the earliest works on pseudo-Anosov transformations \cite{fathi1979travaux,FLP_english}. General Smale diffeomorphisms of surfaces have been extensively studied by Bonatti and Langevin \cite{Bonatti1998}. These maps can in fact be \emph{blown down} into pseudo-Anosov ones as in \cite[Theorem 8.3.1]{Bonatti1998} in the sense that in some neighbourhood of the non-wandering set, the map is semi-conjugated to a pseudo-Anosov transformation.

The non-wandering set of a derived from pseudo-Anosov transformation gives an example of a non trivial -- different from a single periodic orbit -- attractor on a surface. In fact, Barge and Martensen proved \cite[Theorem 1]{Barge11classification} -- completing the work of \cite{Rodriguez06expansive} -- that any expansive and transitive attractor, different from a single periodic orbit, comes from a derived from pseudo-Anosov transformation.

Here, since we focus on the smoothness of the stable foliation in order to derive a smooth GIET as in Theorem~\ref{thm:main_1}, we give an explicit construction of derived from pseudo-Anosov maps, adapting Coud\`ene's one \cite[Chapter 9]{coudene2013book} to the setting of surface of genus larger than two.

In this section, we describe the explicit construction of a family of derived from pseudo-Anosov maps by perturbing a pseudo-Anosov transformation at each conical fixed points -- a similar procedure for regular fixed point is performed in Section~\ref{sect:perturbation_regular_point}. We prove that these maps are well defined, are homeomorphisms on $S_g$ and $C^1$ away from conical points, and that for a good choice of parameters, conical points are the only attractive fixed points. By connectedness of $S_g$, the complement $K$ of the union of basins of attraction is not empty. We prove in Theorem~\ref{thm:K_is_hyperbolic} that $K$ is hyperbolic by computing vector fields spanning the stable and the unstable foliations.

\subsection{Perturbation of a pseudo-Anosov}\label{sect:first_def_perturb}

Let $\varphi$ be a pseudo-Anosov transformation on the Riemann surface $S_g$ of genus $g$. Therefore the invariant foliations of $\varphi$ can be derived from a holomorphic quadratic differential $q$ invariant by $\varphi$. Up to consider a cover of order two in most cases, it is not too restrictive to assume that the quadratic differential is Abelian, in other words $q=\omega^2$ so that the transition maps, of the half-translation structure induced by natural coordinates of $\omega$, are translations. Up to multiplying  $\omega$ by a modulus one complex number, the horizontal and vertical foliations $\{\Re ( \omega) = 0 \}$ and $\{ \Im ( \omega ) = 0 \}$ are the invariant foliations of $\varphi$. Let $\lambda >1$ denote the stretch factor of $\varphi$. This stretching is assumed to correspond to the horizontal measured foliation. The vertical measured foliation is stretch by a factor $\lambda^{-1}$. Let $\Sigma$ be the set of points where $\omega$ vanishes, and we call these points \emph{conical points}. We now consider the flat structure induced by $\omega$ on $S_g \smallsetminus \Sigma$, that is charts $z$ so that $\omega =\mathrm{d} z$. In the neighbourhood of every conical point $\sigma \in \Sigma$, there exist an integer $n_{\sigma} >1$, an open set and a chart $z$ on this set such that $\omega = z^{n_{\sigma} -1} \D z$. The angle around $\sigma$ is then $2\pi n_{\sigma}$.

Outside of these neighbourhoods of points of $\Sigma$, we set $f$ to be equal to $\varphi$. We now construct $f$ to be a perturbation of $\varphi$ around each $\sigma$ in $\Sigma$. 

Let $\sigma$ be a conical point, $V_{\sigma}$ a neighbourhood of $\sigma$ and a chart $z$ on $V_{\sigma}$ so that $\omega = z^{n_{\sigma}-1} \D z$. Let $\xi$ be the branched cover at $\sigma$ associated to the chart $z$, $\xi : z \in z^{-1}V_{\sigma} \mapsto z^{n_{\sigma}} \in \xi(z^{-1}(V_{\sigma})) \subset \mathbbm{C}$. Let $(W_i)_{1 \leqslant i \leqslant 2n_{\sigma}}$ be a family of open sets of $\mathbbm{C} \smallsetminus \mathbbm{R}_+$ such that all $\xi|_{W_i}$ are homeomorphisms. Up to replacing $\varphi$ by one of its power, we assume that every conical point is fixed by $\varphi$ and that $\varphi$ respects the leaves of the branched covers: $\varphi(W_i) \cap V_{\sigma} \subset W_i$ for all $i$.

We can define $f$ on the base of the branched cover in the exact same manner as Smale \cite[Section I.9]{smale1967differentiable} does. In order to perform further analysis on the map, we give the following explicit formula that generalizes the one used in \cite[Chapter 9]{coudene2013book} and \cite{coudene2006pictures} in the case of the cat map on the two-torus.

For\footnote{Often in the rest of the paper, $x$ and $y$ might designate points in $S_g$. The notation $z=x+iy$ will only be used when clearly indicated.} $z=x+iy \in \mathbbm{C} \smallsetminus \mathbbm{R}_+$ in the image of $\xi$, we define $f$ as :
\begin{align*}
f(\xi|^{-1}_{W_i}(z)) \coloneqq \xi|^{-1}_{W_i} \left( (\lambda + \beta_{\sigma} k_{\sigma} (|z|/\alpha_{\sigma}))x + i \lambda^{-1}y \right),
\end{align*}
for some $\alpha_{\sigma} >0$, $\beta_{\sigma} \leqslant 0$ (we will be interested in the case $\beta < 1 - \lambda$) and with $|z|\leqslant \alpha_{\sigma}$ and where $k_{\sigma} : \mathbbm{R} \to \mathbbm{R}$ is an even unimodal map of class $\mathcal{C}^1$, compactly supported in $[-1,1]$ 
and such that $k'_{\sigma}$ is Lipschitz continuous, for example $k_{\sigma}(r) = (1 - r^2)^2 \mathbbm{1}_{[-1,1]}$. We do this perturbation at every conical point. We will see that such $f$ is well defined for small enough $\alpha_{\sigma}$.

When such a map $f$ is well defined, we will see in next section that interpolating $(\beta_{\sigma})_{\sigma \in \Sigma}$ with $0$ gives a homotopy between $f$ and $\varphi$. Therefore, $f$ is an example of derived from pseudo-Anosov transformation. 

We give in Figure~\ref{fig:saddles} a heuristic representation, when $n_{\sigma} = 1$ -- which corresponds to the case treated by Smale in \cite{smale1967differentiable}.

\begin{figure}
\begin{center}
\raisebox{-0.5\height}{\includegraphics[height=.2\textwidth , width=0.4\textwidth]{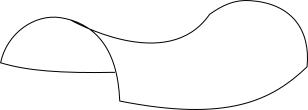}}\hspace{10mm}
\raisebox{-0.5\height}{\includegraphics[height=.2\textwidth , width=0.4\textwidth]{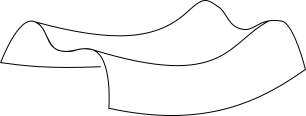}}
\end{center}
\caption{\label{fig:saddles} Heuristic representations of a saddle and of a saddle perturbed into a sink.}
\end{figure}

\begin{remark}
Because this construction generalizes Coud\`ene's one \cite[Chapter 9]{coudene2013book} on the two-torus, all results obtained in following sections have their counterparts in the two-torus case.
\end{remark}

\subsection{Smoothness and range of parameters}\label{sect:first_prop_f_K}

In order to ensure that the explicit construction introduced above makes sense, we need to ensure that the open sets $V_{\sigma}$ near each conical point do not overlap with one another, nor with themselves. This can be easily done by taking the parameter $\alpha_{\sigma}$ small enough. We give a simple  bound on their size by geometric considerations.

Let $Syst_{s.c}(S_g) = \inf \{ \D(\sigma_1,\sigma_2) \mid \sigma_1, \sigma_2 \in \Sigma \}$, where $\D$ is the distance for the flat metric on $S_g$ associated to the invariant measured-foliation of $\varphi$. 
Let $Syst(S_g) = \inf \{ l(\gamma) \mid \gamma \neq 0 \text{ in } \pi_1(S_g) \}$ be the smallest possible length of any non-trivial loop. Then define $\delta_{\Sigma} = \min( Syst_{s.c}(S_g), Syst(S_g))$. 

Actually, it is a general fact that $Syst_{s.c}(S_g) \leqslant Syst(S_g)$, and therefore $\delta_\Sigma = Syst_{s.c}(S_g)$. This can be seen from the fact that if $\gamma \neq 0$ in $\pi_1(S_g)$, we can assume that $\gamma$ is a straight line. Translating $\gamma$, we get that $\gamma$ is contained in a cylinder. Thus, considering the maximal cylinder containing $\gamma$, it must have a conical point in its boundary. In other words, there is a translation of $\gamma$ that is a saddle connection.

\needspace{2cm}
\begin{proposition}\label{prop:f_diffeo}
For all $\beta_{\sigma} \in ]-\lambda,0]$ and all $\alpha_{\sigma} < \delta_{\Sigma}/2$, $f$ is a homeomorphism on $S_g$ and is a $\mathcal{C}^1$ diffeomorphism on $S_g \smallsetminus \Sigma$.
\end{proposition}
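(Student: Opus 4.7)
The plan is to verify three things in sequence: the geometric soundness of the perturbation domains, the bijectivity of the local model, and its $\mathcal{C}^1$ regularity.

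First, I would use the hypothesis $\alpha_\sigma < \delta_\Sigma/2$ to argue that the flat disks of radius $\alpha_\sigma$ around each conical point are embedded and pairwise disjoint: embedded because $2\alpha_\sigma < Syst(S_g)$ forbids any non-trivial loop of that length, and pairwise disjoint because $2\alpha_\sigma < Syst_{s.c}(S_g)$. Hence the perturbations near different conical points do not interact. The compact support of $k_\sigma$ in $[-1,1]$ then ensures that the local formula for $f$ in the branched chart coincides exactly with $\varphi$ on the annulus $|z|/\alpha_\sigma \geqslant 1$, so the two pieces of the definition glue continuously into a well-defined map $f : S_g \to S_g$.

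Next, I would analyze bijectivity on the branched-cover side via the local model
\begin{equation*}
F(x+iy) = g(x,y) + i \lambda^{-1} y, \qquad g(x,y) = \bigl(\lambda + \beta_\sigma k_\sigma(r/\alpha_\sigma)\bigr) x, \quad r = \sqrt{x^2+y^2}.
\end{equation*}
Since $y \mapsto \lambda^{-1} y$ is a bijection, it suffices to show that $x \mapsto g(x,y)$ is strictly increasing on each horizontal slice. A direct computation gives
\begin{equation*}
\partial_x g(x,y) = \lambda + \beta_\sigma k_\sigma(r/\alpha_\sigma) + \frac{\beta_\sigma}{\alpha_\sigma} k'_\sigma(r/\alpha_\sigma) \cdot \frac{x^2}{r}.
\end{equation*}
The first two terms are bounded below by $\lambda + \beta_\sigma > 0$ because $\beta_\sigma > -\lambda$ and $0 \leqslant k_\sigma \leqslant 1$. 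The third term is non-negative because $k_\sigma$ is even and unimodal, so $k'_\sigma \leqslant 0$ on $[0,1]$, while $\beta_\sigma \leqslant 0$. Hence $\partial_x g > 0$ throughout, so $F$ is injective. Composing with the local branches $\xi|_{W_i}^{-1}$ yields local homeomorphisms on $V_\sigma$ that match $\varphi$ across the gluing circle; sending $0 \mapsto 0$ gives continuity at $\sigma$, so $f$ is a global homeomorphism of $S_g$.

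Finally, for the $\mathcal{C}^1$ statement on $S_g \smallsetminus \Sigma$, I would note that $F$ is $\mathcal{C}^1$ because $k_\sigma$ is, with Jacobian determinant $\lambda^{-1} \partial_x g > 0$; since $\xi$ is a local biholomorphism away from the branch point, $f$ is a local $\mathcal{C}^1$ diffeomorphism on $V_\sigma \smallsetminus \{\sigma\}$. The vanishing $k_\sigma(1) = k'_\sigma(1) = 0$ forced by compact support makes the matching with $\varphi$ across $|z| = \alpha_\sigma$ of class $\mathcal{C}^1$, while on $S_g \smallsetminus \bigcup_\sigma V_\sigma$ we have $f=\varphi$. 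The main obstacle is the sign analysis of $\partial_x g$: it exploits very tightly the combined constraints $\beta_\sigma \in\, ]-\lambda,0]$ and the unimodal, non-increasing shape of $k_\sigma$ on $[0,1]$, and without any one of these ingredients the radial cross term could flip the sign of the derivative and destroy injectivity.
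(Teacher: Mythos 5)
Your local analysis is correct and in fact more explicit than the paper's: the monotonicity bound $\partial_x g \geqslant \lambda + \beta_\sigma > 0$ (using $\beta_\sigma \leqslant 0$, $0 \leqslant k_\sigma \leqslant 1$ and $k'_\sigma \leqslant 0$ on $[0,1]$) is exactly why the differential is invertible, which the paper merely asserts. But there is a genuine gap at the last step: you pass from ``the local model $F$ is injective in each branched chart and $f$ is a local homeomorphism everywhere'' directly to ``$f$ is a global homeomorphism of $S_g$''. Local injectivity does not yield global bijectivity: a local homeomorphism of a compact surface onto itself can perfectly well be a $d$-to-$1$ covering. The danger is concrete here: near the boundary of each perturbed disk the horizontal factor $\lambda + \beta_\sigma k_\sigma$ is close to $\lambda > 1$, so $f$ pushes the disk horizontally beyond itself and its image overlaps the region where $f = \varphi$; nothing in your argument rules out a collision $f(p) = f(q)$ with $p$ inside a perturbed disk and $q$ outside it, or with $p$, $q$ lying in different sectors $W_i$, $W_j$ around the same conical point. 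Surjectivity is likewise never addressed.

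This missing global step is precisely the content of the paper's proof: $f(S_g)$ is open (local homeomorphism) and closed (compactness), hence equals $S_g$ by connectedness; a surjective local homeomorphism of the compact surface is a covering map; and since $f^{-1}(\sigma) = \{\sigma\}$ for a conical point, the cover has a single sheet, so $f$ is injective. You could also close the gap inside your own setup: for instance, note that $|(\lambda + \beta_\sigma k_\sigma)x| \leqslant \lambda |x|$ gives $f(D_\sigma) \subset \varphi(D_\sigma)$ for each perturbed disk $D_\sigma$, which combined with the injectivity of $\varphi$ and the relation $\xi \circ f = F \circ \xi$ reduces all possible collisions to the injectivity of $F$ that you did prove; or factor $f = \varphi \circ \psi$, where in each chart $\psi(x,y) = \bigl( (1 + \lambda^{-1}\beta_\sigma k_\sigma(r/\alpha_\sigma))x, y \bigr)$ is a homeomorphism of the disk equal to the identity near its boundary. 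As written, however, the bijectivity of $f$ --- the actual assertion of the proposition --- is asserted rather than proved.
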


\begin{proof}
Clearly, $f$ is continuous on $S_g$ and differentiable everywhere except on $\Sigma$. 
The differential on $S_g \smallsetminus \Sigma$ of $f$ is invertible, hence $f$ is a local homeomorphism on $S_g \smallsetminus \Sigma$ and hence $f(S_g \smallsetminus \Sigma)$ is open. In charts around points of $\Sigma$, one can see that $f$ is a local homeomorphism in a neighbourhood of $\Sigma$. 
Hence $f(S_g)$ is open. Since $S_g$ is compact, $f(S_g)$ is closed. Hence $f(S_g)=S_g$, because $S_g$ is connected. Therefore, $f$ is a surjective local homeomorphism, hence $f$ is a covering map. Since the pre-image of a point of $\Sigma$ by $f$ is itself, $f$ is injective.
\end{proof}

By refining the range where the $\beta_{\sigma}$ live, we can turn conical points into attractive fixed points.

\begin{proposition}
For $\beta_{\sigma} \in ]-\lambda,1-\lambda[$ and $\alpha_{\sigma} < \delta_{\Sigma}/2$, $\sigma \in \Sigma$ is an attractive fixed point for $f$. Let $U_{\sigma}$ be its basin of attraction. Then $U_{\sigma}$ is an open set.
\end{proposition}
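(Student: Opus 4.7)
The plan is to exploit the explicit formula for $f$ in the coordinate $z$ downstairs of the branched cover $\xi$ near $\sigma$. In that chart $\sigma$ corresponds to $z=0$, and the formula gives $f(0)=0$ immediately, so $\sigma$ is a fixed point. Since $|z|$ does not depend on the choice of branch $W_i$, the modulus of the image is unambiguously
$$|f(z)|^2 \;=\; a(z)^2 x^2 + \lambda^{-2} y^2, \qquad a(z) := \lambda + \beta_\sigma \, k_\sigma(|z|/\alpha_\sigma),$$
and this consistently defines $f$ on the whole neighborhood $V_\sigma$: on $\{|z|\geq \alpha_\sigma\}$ one has $k_\sigma=0$, so $f$ reduces to $\varphi$ there.

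To establish attractiveness, I would exploit the range of parameters. With the natural normalization $k_\sigma(0)=1$ (as in the given example), continuity of $k_\sigma$ together with the hypothesis $\beta_\sigma < 1-\lambda$ produces some $r_0 \in (0,\alpha_\sigma]$ and some $\rho<1$ such that $a(z)\leq \rho$ on $\{|z|\leq r_0\}$; the lower bound $\beta_\sigma > -\lambda$ combined with $0\leq k_\sigma \leq 1$ gives $a(z)>0$ there. Setting $\tilde\rho:=\max(\rho,\lambda^{-1})<1$, one obtains $|f(z)|\leq \tilde\rho|z|$ on the closed disk $B:=\{|z|\leq r_0\}$. Hence $B$ is positively invariant, and iterating yields $|f^n(z)|\leq \tilde\rho^{\,n}|z|\to 0$, so every point of $B$ belongs to $U_\sigma$ and $\sigma$ is attractive.

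Openness of $U_\sigma$ is then a standard consequence. For $p\in U_\sigma$, choose $N$ with $f^N(p)\in\operatorname{int}(B)$; by continuity of $f^N$ (a homeomorphism by the previous proposition) there is an open neighborhood $V$ of $p$ with $f^N(V)\subset \operatorname{int}(B)$, and forward invariance of $B$ yields $V\subset U_\sigma$. The only real care one must take throughout is to confirm that this local analysis describes the dynamics on $S_g$ and not merely in a single branch $W_i$ of $\xi$; but this is automatic, since the downstairs expression for $|f(z)|$ is branch-independent, so the contraction estimate directly governs distances to $\sigma$ in the flat metric induced by $\omega$.
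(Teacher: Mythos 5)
Your proof is correct, but it takes a more elementary route than the paper. The paper's proof is a one-liner: viewing $f$ through the branched cover, the fixed point $0$ has differential $\mathrm{diag}(\lambda+\beta_\sigma,\lambda^{-1})$ with both eigenvalues in $(0,1)$, so Grobman--Hartman gives local attractiveness, and then $U_\sigma=\bigcup_{n\geqslant 0}f^{-n}(B(\sigma,\varepsilon))$ is open. You instead avoid the linearization theorem entirely by a direct Lyapunov-type estimate: since $k_\sigma(0)=1$ and $\lambda+\beta_\sigma<1$, continuity of $k_\sigma$ yields a small disk on which $0<a(z)\leqslant\rho<1$, hence $|f(z)|\leqslant\max(\rho,\lambda^{-1})\,|z|$, giving a positively invariant disk whose points converge geometrically to $\sigma$; openness then follows exactly as in the paper. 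What your approach buys is that it needs only continuity of $k_\sigma$ (plus the implicit normalization $k_\sigma(0)=1$, which you rightly make explicit, since otherwise the stated range of $\beta_\sigma$ would not guarantee attractiveness), and it is essentially the same radial contraction computation the paper performs later in Proposition 2.4 to show $B(\sigma,|p^\sigma|)\subset U_\sigma$; what the paper's approach buys is brevity, since Grobman--Hartman immediately handles the hyperbolic fixed point downstairs. Your closing remark on branch-independence of $|f(z)|$, and hence on the estimate controlling the flat distance to $\sigma$ (which is proportional to $|\xi(\cdot)|$), correctly disposes of the only genuine subtlety in transferring the downstairs estimate to the surface.
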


\begin{proof}
Fix some $\sigma \in \Sigma$ and a $W_i$. Then, in charts centred at $\sigma$, the Jacobian matrix of $f$ is
\begin{align}\label{eq:jacobian_f}
(\mathrm{Jac} \, f) (x,y) \coloneqq \mathrm{Jac} \, \xi \circ f \circ \xi\vert_{W_i}^{-1} (x,y) = \begin{pmatrix}
\lambda + \beta_\sigma k_\sigma \left( \frac{|z|}{\alpha_\sigma} \right) + \frac{\beta_\sigma}{\alpha_\sigma} \frac{x^2}{|z|} k_{\sigma}' \left( \frac{|z|}{\alpha_\sigma} \right) & \frac{\beta_\sigma}{\alpha_\sigma} \frac{xy}{|z|} k_{\sigma}' \left( \frac{|z|}{\alpha_\sigma} \right) \\
0 & \lambda^{-1}
\end{pmatrix},
\end{align}
where $z = x + iy$. Evaluating at $(0,0)$, we get that $\mathrm{Jac} \, f (0,0) = \begin{pmatrix}
\lambda + \beta_\sigma & 0 \\ 0 & \lambda^{-1} \end{pmatrix}$. It is then a consequence of the Grobman--Hartman theorem that $U_\sigma$ contains a ball centred at $\sigma$. Finally, we have $U_{\sigma} = \bigcup\limits_{n \geqslant 0} f^{-n}(B(\sigma,\varepsilon))$, for some small enough $\varepsilon > 0$.
\end{proof}

Since basins of attraction $U_{\sigma}$ are disjoint open sets and $S_g$ is connected, these basins are not an open cover. Therefore the complement of the union of basins is not empty. Define $U_{\Sigma} \coloneqq \bigsqcup\limits_{\sigma \in \Sigma} U_{\sigma}$ and $K \coloneqq S_g \smallsetminus U_{\Sigma}$. These sets are clearly invariants by $f$.

\begin{proposition}\label{prop:existence_ball_in_basin}
If for some $\sigma \in \Sigma$, $\beta_{\sigma} \in ]-\lambda,1-\lambda[$ and $\alpha_{\sigma} < \delta_{\Sigma}/2$, then there exists a fixed hyperbolic point $p^{\sigma}_i$, $1 \leqslant i \leqslant 2 n_{\sigma}$, on each horizontal ray starting at $\sigma$. We number them by going counter-clockwise around $\sigma$. All these points are at the same distance $|p^{\sigma}|$ from $\sigma$. Moreover $B(\sigma, |p^{\sigma}|) \subset U_{\sigma}$.
\end{proposition}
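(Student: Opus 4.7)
The plan is to work downstairs, in the base $\mathbb{C}$ of the branched cover $\xi : u \mapsto u^{n_\sigma}$, where $f$ reads in closed form as the map $F(x,y) = (a(|z|)\, x, \lambda^{-1} y)$ with $z = x+iy$ and $a(r) \coloneqq \lambda + \beta_\sigma k_\sigma(r/\alpha_\sigma)$. Since $\xi$ is a local diffeomorphism off $\sigma$, existence and hyperbolicity of the fixed points of $f$ in $V_\sigma \smallsetminus \{\sigma\}$ reduce to those of $F$ in the base, so the first step is to locate the fixed points of $F$. The equation $F(z) = z$ forces $y = 0$ and then either $x = 0$ or $k_\sigma(|x|/\alpha_\sigma) = (1-\lambda)/\beta_\sigma$. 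The assumption $\beta_\sigma \in (-\lambda, 1-\lambda)$ places the right-hand side in $(0,1)$, so unimodality of $k_\sigma$ (strictly decreasing on $[0,1]$ from $k_\sigma(0) = 1$ to $k_\sigma(1) = 0$) combined with the intermediate value theorem yields a unique $r_\sigma \in (0,1)$ with $k_\sigma(r_\sigma) = (1-\lambda)/\beta_\sigma$. The two non-trivial fixed points $z = \pm r_\sigma \alpha_\sigma$ of $F$ each have $n_\sigma$ preimages under $\xi$, producing $2n_\sigma$ fixed points of $f$ in $V_\sigma$, evenly distributed at arguments $k\pi/n_\sigma$, $k = 0, \ldots, 2n_\sigma - 1$, in the $u$-chart — these are exactly the $2n_\sigma$ vertical rays from $\sigma$, numbered counter-clockwise as required. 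Integrating $|\omega| = |u|^{n_\sigma - 1} |\D u|$ along a radial path yields the common flat distance $|p^\sigma| = r_\sigma \alpha_\sigma / n_\sigma$.

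For hyperbolicity, I would compute $\D F$ at $(r_\sigma \alpha_\sigma, 0)$: using $\lambda + \beta_\sigma k_\sigma(r_\sigma) = 1$ and the vanishing of the off-diagonal entries on the real axis, this is the diagonal matrix with entries $1 + \beta_\sigma r_\sigma k_\sigma'(r_\sigma)$ and $\lambda^{-1}$. Since $\beta_\sigma < 0$ and $k_\sigma'(r_\sigma) < 0$ by unimodality, the first eigenvalue is strictly greater than $1$ while the second is strictly less than $1$, so each $p^\sigma_i$ is a hyperbolic saddle. Conjugation by the local diffeomorphism $\xi$ transports this spectrum to $f$ at each lift.

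To establish $B(\sigma, |p^\sigma|) \subset U_\sigma$, I would again pass to the base, where the flat ball corresponds to the disc $\{|z| < r_\sigma \alpha_\sigma\}$. On this disc, strict monotonicity of $k_\sigma$ together with $\beta_\sigma < 0$ force $a(|z|) < 1$, whence $|F(z)|^2 = a(|z|)^2 x^2 + \lambda^{-2} y^2 < |z|^2$. Thus $(|F^n(z)|)_n$ is strictly decreasing and converges to some $L \geqslant 0$, and the only delicate step — the main obstacle in the argument — is ruling out $L > 0$: if $L > 0$ then by continuity $a(|F^n(z)|) \to a(L) < 1$, providing an eventual uniform contraction $|F^{n+1}(z)| \leqslant c\, |F^n(z)|$ with $c < 1$, contradicting $|F^n(z)| \to L > 0$. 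Hence $F^n(z) \to 0$, which lifts to $f^n \to \sigma$ on $B(\sigma, |p^\sigma|)$, placing this flat ball inside the basin $U_\sigma$.
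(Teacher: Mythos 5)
Your proposal is correct and takes essentially the same route as the paper: work in the chart at the base of the branched cover, locate the common fixed radius from the equation $\lambda+\beta_\sigma k_\sigma(\cdot)=1$ on the expanding rays, verify hyperbolicity from the diagonal Jacobian $\mathrm{diag}(1+\beta_\sigma r_\sigma k_\sigma'(r_\sigma),\lambda^{-1})$, and obtain $B(\sigma,|p^\sigma|)\subset U_\sigma$ from the strict decrease of the distance to $\sigma$ inside the ball. The only (harmless) variation is the final step, where you exclude a positive limit of $|F^n(z)|$ by continuity of $a$ at that limit, whereas the paper gets a uniform contraction ratio on compact annuli $\{\varepsilon\leqslant \D(z,\sigma)\leqslant t_0-\varepsilon\}$; both arguments are fine.
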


\begin{proof}
Let $\sigma$, $\beta_{\sigma}$ and $\alpha_{\sigma}$ be as in the proposition. Let $\gamma : [0,\alpha_{\sigma}] \to S_g$ be a unit speed parametrization of a horizontal ray such that $\gamma(0)=\sigma$. Hence, in charts, $f(\gamma(t)) = ((\lambda + \beta_{\sigma} k(t/\alpha_{\sigma}))t,0)$. Let $h:[0,\alpha_{\sigma}] \to \mathbbm{R}$ be the function $h(t)= (\lambda + \beta_{\sigma} k(t/\alpha_{\sigma}))t$. Then $h(0)=0$, $h(\alpha_{\sigma}) = \lambda \alpha_{\sigma} > \alpha_{\sigma}$, and $h'(0)=\lambda + \beta_{\sigma} \in \, ]0,1[$. Hence $h$ has a fixed point in $]0,\alpha_{\sigma}[$. Call $t_0$ the smallest fixed point. This value doesn't depend on which horizontal ray starting from $\sigma$ we consider. The point $p=\gamma(t_0)$ is fixed by $f$ and is hyperbolic: in the charts centred at $\sigma$, the Jacobian matrix of $f$ at $p$ is \[ (\mathrm{Jac} \, f)(p) = \begin{pmatrix}
1 + \frac{\beta_{\sigma}}{\alpha_{\sigma}} t_0  k_{\sigma}' \left( \frac{t_0}{\alpha_{\sigma}} \right) & 0 \\ 0 & \lambda^{-1}
\end{pmatrix}, \]
where we used that $\lambda + \beta_\sigma k_\sigma \left( \frac{t_0}{\alpha_\sigma} \right) =1$ because $t_0$ is a fixed point of $h$. Now, since $\frac{\beta_{\sigma}}{\alpha_{\sigma}} t_0  k_{\sigma}' \left( \frac{t_0}{\alpha_{\sigma}} \right) > 0 $, $p$ is a hyperbolic fixed point.

By definition of $t_0$, we have $\gamma([0,t_0[) \subset U_{\sigma}$. Let $z \in B(\sigma,t_0)$. In the appropriate leaf of the branched-cover over $\sigma$, we have $z=(x,y)$ in coordinates. Hence,
\begin{align*}
\D(f(z),\sigma)^2 &\leqslant (\lambda + \beta_{\sigma} k(d(z,\sigma)/\alpha_{\sigma}))^2 x^2 + \lambda^{-2} y^2 \\
&< (\lambda + \beta_{\sigma} k(t_0/\alpha_{\sigma}))^2 x^2 + \lambda^{-2} y^2 \leqslant x^2 + \lambda^{-2} y^2 \leqslant \D(z,\sigma)^2.
\end{align*}
Hence, the function $z \mapsto \D(f(z),\sigma)/\D(z,\sigma)$ is continuous and strictly bounded from above by $1$ on the compact annulus $\{ z \in S_g \mid \varepsilon \leqslant \D(z,\sigma) \leqslant t_0-\varepsilon \}$. Therefore every orbit of point from the ball $B(\sigma,t_0-\varepsilon)$ ends up entering the ball $B(\sigma,\varepsilon)$. Hence the claim.
\end{proof}

\subsection{Invariant sets}

Here we investigate the topological aspects of the invariant set $K$. In particular we prove that it can be written as the union of the closure of some stable leaves of the hyperbolic fixed points $p_{i}^{\sigma}$ and that it is a hyperbolic set.

We start by proving that the set $U_{\Sigma}$ is dense in $S_g$, or equivalently that $K$ is of empty interior. In order to do this we need the following lemma which is obtained by simply computing the differential of $f$.

\begin{lemma}
Define $(q^{\sigma}_i)_i$ as the $2 n_{\sigma}$ points at distance $|p^{\sigma}|$ from $\sigma$ on the \emph{vertical} rays starting from $\sigma$. Then for all $x \in S_g \smallsetminus \bigsqcup\limits_{\sigma \in \Sigma} ( B(\sigma,|p^{\sigma}|) \cup \{ q^{\sigma}_i \mid 1 \leqslant i \leqslant 2 n_{\sigma} \})$, $f$ is a strict dilation in the horizontal direction.
\end{lemma}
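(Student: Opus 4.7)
The plan is to compute the derivative of $f$ in the vertical direction directly and check that its norm exceeds $1$ off the excluded set. Outside every chart neighbourhood $V_\sigma$ of a conical point, $f$ coincides with $\varphi$, whose derivative stretches the vertical direction by the factor $\lambda > 1$, so on that open set the conclusion is immediate. All the work therefore takes place inside the branched-cover charts, where $f$ has the explicit form
\[
F(x,y) \;=\; \bigl((\lambda + \beta_\sigma k_\sigma(r/\alpha_\sigma))\,x,\; \lambda^{-1} y\bigr), \qquad r:=\sqrt{x^2+y^2}.
\]

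I would first exploit the fact that the second component $F_2(x,y)=\lambda^{-1} y$ is independent of $x$, so $\partial_x F_2\equiv 0$. Hence the vertical direction — the $x$-axis direction in the chart, which spans the leaves of the vertical/unstable foliation of $\varphi$ — is preserved by $\D_x f$, and the dilation factor along it is simply
\[
\partial_x F_1(x,y) \;=\; a(r)+\beta_\sigma k_\sigma'(r/\alpha_\sigma)\,\frac{x^2}{r\,\alpha_\sigma}, \qquad a(r):=\lambda+\beta_\sigma k_\sigma(r/\alpha_\sigma).
\]
From the proof of Proposition \ref{prop:existence_ball_in_basin}, the distance $t_0=|p^\sigma|$ from the hyperbolic fixed point to $\sigma$ is the smallest positive zero of $h(t)-t=t(a(t)-1)$, so $a(t_0)=1$. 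Since $k_\sigma$ is even and unimodal with maximum at $0$, one has $k_\sigma'\leqslant 0$ on $[0,1]$ and $k_\sigma'<0$ on $(0,1)$; together with $\beta_\sigma<0$ (forced by $\beta_\sigma<1-\lambda<0$) this yields two ingredients: $a'(r)=(\beta_\sigma/\alpha_\sigma)\,k_\sigma'(r/\alpha_\sigma)>0$ on $(0,\alpha_\sigma)$, so $a$ is strictly increasing with $a(r)>1$ for $r\in(t_0,\alpha_\sigma]$; and the extra term $\beta_\sigma k_\sigma'(r/\alpha_\sigma)\,x^2/(r\alpha_\sigma)$ is non-negative throughout the chart.

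Combining these, I split into cases on $r$. For $r\geqslant\alpha_\sigma$, the perturbation vanishes and $\partial_x F_1=\lambda>1$; for $t_0<r<\alpha_\sigma$, the non-negative extra term gives $\partial_x F_1\geqslant a(r)>1$. The only delicate case is $r=t_0$, where $a(t_0)=1$ and the strict inequality must come entirely from the extra term; since $\beta_\sigma k_\sigma'(t_0/\alpha_\sigma)$ is strictly positive, the term is strictly positive iff $x\neq 0$. On the chart circle $\{r=t_0\}$ the condition $x=0$ selects precisely the points on the $y$-axis, i.e.\ the points on the horizontal rays from $\sigma$ at distance $t_0$, which are exactly the excluded $q^\sigma_i$. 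Removing the $q^\sigma_i$'s therefore ensures $\partial_x F_1>1$ everywhere on the allowed set. The main (mild) obstacle is this boundary case $r=t_0$: identifying the $q^\sigma_i$ as the precise locus where the strict vertical dilation degenerates to equality is the heart of the matter.
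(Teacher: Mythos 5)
Your computation is correct and is essentially the paper's own (unwritten) argument: the lemma is stated there as "obtained by simply computing the differential of $f$", and that is exactly what you do — the vertical direction is $\D f$-invariant with factor $a(r) + \beta_\sigma k_\sigma'(r/\alpha_\sigma)\,x^2/(r\alpha_\sigma)$, with $a(|p^\sigma|)=1$, so the only degeneracy on the circle $\{r=|p^\sigma|\}$ is at the horizontal points, i.e.\ precisely the excluded $q^\sigma_i$. The one caveat, which is an implicit hypothesis of the construction rather than a gap in your argument, is that the step "$a$ is strictly increasing" uses $k_\sigma'<0$ on $(0,1)$, i.e.\ the strict reading of "unimodal", which holds for the paper's model bump $(1-r^2)^2\mathbbm{1}_{[-1,1]}$.
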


\begin{proof}
Fix some $\sigma \in \Sigma$. We want to prove that the upper-left coefficient in \eqref{eq:jacobian_f} is strictly larger than $1$ on the claimed domain. Call $a(x,y)$ this coefficient. For $t > |p^{\sigma}|$, we have that $(\lambda + \beta_\sigma k_\sigma(t/\alpha_\sigma))t > t$. Then, for any $\theta \in \mathbbm{R}$, $ a(t \cos(\theta), t \sin(\theta)) = \lambda + \beta_\sigma k_\sigma(t/\alpha_\sigma) + \frac{\beta_\sigma}{\alpha_\sigma}t(\cos \theta)^2 k_{\sigma}'(t/\alpha_\sigma) > 1$.

On the other hand, $a(|p^{\sigma}| \cos(\theta), |p^{\sigma}| \sin(\theta)) = 1 + \frac{\beta_\sigma}{\alpha_\sigma}|p^{\sigma}|(\cos \theta)^2 k_{\sigma}'(|p^{\sigma}|/\alpha_\sigma) $ is equal to $1$ if and only if $\cos \theta = 0$, or in other words, $(|p^{\sigma}| \cos(\theta), |p^{\sigma}| \sin(\theta)) = q_i^{\sigma}$ for some $1 \leqslant i \leqslant 2 n_\sigma$.
\end{proof}

\begin{proposition}
For all $z \in K$ and all $\varepsilon >0$, every horizontal segment of length $\varepsilon$ containing $z$ in its interior crosses $U_{\Sigma}$. Hence $U_{\Sigma}$ is dense and $K$ has empty interior.
\end{proposition}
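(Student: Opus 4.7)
The plan is to argue by contradiction. Suppose there exists a vertical segment $I$ of length $\varepsilon$ with $x$ in its interior such that $I\subset K$. Each basin $U_\sigma$ is forward and backward $f$-invariant, hence so is $K$, and since $f$ preserves the vertical foliation, every iterate $f^n(I)$ is a vertical segment contained in $K$.

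Next, I want a uniform vertical expansion rate on $K$. The preceding lemma gives strict vertical dilation off the set $\bigsqcup_\sigma\bigl(B(\sigma,|p^\sigma|)\cup\{q^\sigma_i\}\bigr)$. A direct computation in the chart around $\sigma$ shows $f^n(q^\sigma_i)=(0,\lambda^{-n}|p^\sigma|)\to\sigma$, so $q^\sigma_i\in U_\sigma$; combined with Proposition~\ref{prop:existence_ball_in_basin}, this places the whole exceptional set inside the open set $U_\Sigma$. Hence $K$ is compact, disjoint from $\Sigma$, and the vertical dilation factor --- continuous on $S_g\smallsetminus\Sigma$ --- is strictly greater than $1$ at every point of $K$. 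Compactness upgrades this pointwise bound to a uniform one: there exists $c>1$ such that ${\rm length}(f(J))\ge c\cdot{\rm length}(J)$ for every vertical subsegment $J\subset K$, and iterating gives ${\rm length}(f^n(I))\ge c^n\varepsilon\to\infty$.

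To close the argument, I invoke minimality of the vertical foliation of $\varphi$, which is a defining feature of a linear pseudo-Anosov on $S_g$ (every leaf is dense). Combined with compactness of $S_g$ and continuity of the vertical translation, a standard open-cover argument yields $L_0>0$ such that every vertical arc of length at least $L_0$ in $S_g$ meets the open set $U_\Sigma$. For $n$ large enough, $f^n(I)$ is such an arc, so $f^n(I)\cap U_\Sigma\neq\emptyset$, contradicting $f^n(I)\subset K$. Therefore $I\cap U_\Sigma\neq\emptyset$; density of $U_\Sigma$ is then immediate, since every nonempty open subset of $S_g$ contains a vertical subsegment, and so $K$ has empty interior.

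The main obstacle is the upgrade from the pointwise strict inequality furnished by the previous lemma to the uniform lower bound $c>1$ on $K$. This hinges on verifying that \emph{every} exceptional point of that lemma lies in $U_\Sigma$ --- the balls via Proposition~\ref{prop:existence_ball_in_basin} and the points $q^\sigma_i$ via the short orbit computation --- so that they stand at positive distance from the compact set $K$ and compactness of $K$ does the rest.
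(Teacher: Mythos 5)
Your argument is correct and follows essentially the same route as the paper: iterate the offending vertical segment, use strict vertical dilation on the compact set $K$ (after noting that the exceptional points $q_i^{\sigma}$ and the balls $B(\sigma,|p^{\sigma}|)$ lie in $U_{\Sigma}$, hence off $K$) to make its length grow geometrically, and then use density/minimality of the vertical foliation to force a long vertical segment to meet $U_{\Sigma}$. The only difference is cosmetic and sits in the last step: the paper extracts a subsequential limit of $f^n(x)$ and argues that the long iterated segment is $\delta/2$-dense, hence enters some $B(\sigma,|p^{\sigma}|)$, whereas you package the same fact as a uniform hitting length $L_0$ for vertical arcs; your explicit check that the $q_i^{\sigma}$ belong to $U_{\sigma}$ is a detail the paper leaves implicit.
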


\begin{proof}
By contradiction, let $\gamma : [-\varepsilon,\varepsilon] \to S_g$ be a horizontal segment parametrized by arc length, containing some $z \in K$ and such that $\gamma([-\varepsilon,\varepsilon]) \cap U_{\Sigma} = \emptyset$. Without loss of generality, we can assume that $\gamma(0)=z$. Since $U_{\Sigma}$ is invariant by $f$, we see that the existence of some $-\varepsilon \leqslant t \leqslant \varepsilon$ such that $f^n(\gamma(t)) \in U_{\Sigma}$ is impossible. Hence $f^n(\gamma([-\varepsilon,\varepsilon])) \cap U_{\Sigma} = \emptyset$. By construction of $f$, the set $f^n(\gamma([-\varepsilon,\varepsilon]))$ is a horizontal segment, containing $f^n(z)$ in its interior and of length $l_n$. Since $f$ is a strict dilation in the vertical direction on the compact set $K$, there exists $l_* >1$ such that $l_n \geqslant l_*^n$.

Let $\delta = \inf \{ |p^{\sigma}| \mid \sigma \in \Sigma \}$ and since $K$ is compact and invariant by $f$ let $z' \in K$ be a subsequential limit of $(f^n(z))_n$. Let $n_k$ be an increasing sequence of integers such that $f^{n_k}(z)$ converges to $z'$ as $n_k$ goes to infinity. We know -- see \cite[corollary 14.15]{farb2011primer} -- that the vertical leaf containing $z'$ is at least infinite in one direction and is dense in $S_g$. In particular, some sufficiently long section of this leaf, containing $z'$, is $\delta/4$-dense in $S_g$. Hence, for large enough $n_k$, the curve $f^n(\gamma([-\varepsilon,\varepsilon]))$ is sufficiently long and sufficiently close to the horizontal leaf containing $z'$ to be $\delta/2$-dense in $S_g$. In particular, there exists $-\varepsilon < t < \varepsilon$ such that $\D(f^n(\gamma(t)),\sigma) < \delta$ for some $\sigma \in \Sigma$. This contradicts the fact that $B(\sigma,|p^{\sigma}|) \subset U_{\Sigma}$.
\end{proof}

Recall definitions of strong stable and strong unstable leaves of $x \in S_g$ with respect to $f$
\begin{align*}
W^{ss}(x) &= \{ y \in S_g \mid \D(f^{n}(x),f^{n}(y)) \to 0 \text{ as } n \to +\infty \}, \\
W^{su}(x) &= \{ y \in S_g \mid \D(f^{-n}(x),f^{-n}(y)) \to 0 \text{ as } n \to +\infty \}.
\end{align*}
If $x$ is a fixed point of $f$, then these sets are invariant by $f$. 

Here, these leaves at hyperbolic fixed points $p^{\sigma}_{i}$ enable to describe precisely the set $K$. We start by showing that the stable leaves can be seen as the \emph{horizontally accessible border} of $U_{\Sigma}$\footnote{The accessible border of an open set $U \subset X$ is the set of points of $\partial U$ that are the endpoints of a curve $\gamma : [0,1] \to X$ contained in $U$ for $t \in [0,1)$. The horizontally accessible border is obtained by assuming that the curves $\gamma$ are horizontal.} -- and are obviously contained in $K$. On the other hand, unstable leaves are dense.

\begin{proposition}\label{prop:stable_leaves} The stable and unstable leaves of the fixed point $p^{\sigma}_i$ satisfies the following assertions.
\begin{enumerate}[label=(\roman*), wide]
\item If $x \in U_{\sigma}$ and $\gamma : [0,1] \to S_g$ is a horizontal\footnote{Using the flow $h_t$ introduced in Section~\ref{subsect:h_t}, one can bootstrap from Proposition~\ref{prop:stable_leaves}(i) and remove the assumption that $\gamma$ is horizontal.} curve such that $\gamma(0)=x$, $\gamma([0,1[) \subset U_{\sigma}$ and $\gamma(1) \notin U_{\sigma}$ then $\gamma(1)$ belongs to $\bigsqcup\limits_{1 \leqslant i \leqslant 2n_{\sigma}} W^{ss}(p_{i}^{\sigma})$.
\item For all $\sigma \in \Sigma$ and all $1 \leqslant i \leqslant n_{\sigma}$, the unstable leaf $W^{su}(p^{\sigma}_i)$ contains a full semi-infinite horizontal leaf. Hence $W^{su}(p^{\sigma}_i)$ is dense in $S_g$.
\end{enumerate}
\end{proposition}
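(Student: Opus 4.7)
The plan is to treat (i) and (ii) separately, both exploiting that $f$ preserves the vertical foliation, that $U_\sigma$ is $f$-invariant, and that the action of $f$ near $\sigma$ is given by the explicit formula of Section~\ref{sect:first_def_perturb}.

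For (i), I set $y_0 := \gamma(1)$ and $\gamma_n := f^n \circ \gamma$. Since $f$ preserves the vertical foliation and $U_\sigma$ is $f$-invariant, each $\gamma_n$ is a vertical curve with $\gamma_n([0,1[) \subset U_\sigma$ and $\gamma_n(1) = f^n(y_0) \notin U_\sigma$. From $\gamma(0) \in U_\sigma$ I get $\gamma_n(0) \to \sigma$, so $\gamma_n(0) \in V_\sigma$ for $n$ large; moreover the $\lambda^{-1}$-contraction of the horizontal coordinate forces the horizontal coordinate $y^{(n)}$ of $\gamma_n$, read in the chart, to tend to $0$. The key step is to identify the connected component $I$ of $\{y = y^{(n)}\} \cap U_\sigma$ containing $\gamma_n(0)$: combining the stable manifold theorem at each saddle $p^\sigma_j$ (whose stable manifold is tangent to the horizontal direction, hence transverse to the line $\{y = y^{(n)}\}$) with an analysis of the explicit 1D dynamics $x \mapsto h(x) = (\lambda + \beta_\sigma k_\sigma(x/\alpha_\sigma))\, x$ on the invariant $x$-axis, one shows that for $|y^{(n)}|$ small $I$ equals a segment $(x_-(y^{(n)}), x_+(y^{(n)})) \times \{y^{(n)}\}$ whose two boundary points lie in $\bigsqcup_j W^{ss}_{\mathrm{loc}}(p^\sigma_j)$, and that this whole segment sits inside $V_\sigma$ since $|x_\pm(y^{(n)})| \to |p^\sigma| < \alpha_\sigma$. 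By connectedness, $\gamma_n([0,1[) \subset I$, so $\gamma_n(1) \in \partial I \subset \bigsqcup_j W^{ss}_{\mathrm{loc}}(p^\sigma_j)$. The $f$-invariance of $W^{ss}(p^\sigma_j)$ then gives $y_0 \in W^{ss}(p^\sigma_j)$.

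For (ii), I take the vertical semi-ray $R$ starting at $p^\sigma_i$ and going away from $\sigma$. As $\sigma$ and $p^\sigma_i$ are $f$-fixed and the vertical foliation is $f$-invariant, $R$ lies in an $f$-invariant vertical leaf. On $R$, $f$ is an orientation-preserving homeomorphism fixing $p^\sigma_i$ with derivative $h'(|p^\sigma|) > 1$, so $f(s) > s$ just past $p^\sigma_i$. The main step is to show that $p^\sigma_i$ is the only fixed point of $f$ on the open semi-ray $R$: in $R \cap V_\sigma$ this is a direct inspection of $h$; outside all perturbation neighborhoods $f = \varphi^n$ stretches vertical arc length by $\lambda^n$, and since every vertical leaf of the pseudo-Anosov is dense (no saddle connection) the leaf through $\sigma$ carries no $\varphi^n$-fixed point other than $\sigma$ itself; in $V_{\sigma'}$ for $\sigma' \neq \sigma$, the only fixed points of $f$ lie on the vertical rays issuing from $\sigma'$, which belong to a vertical leaf distinct from the one carrying $R$. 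Consequently, parametrising $R$ by arc length from $p^\sigma_i$, the sequence $(f^n(s_0))_n$ is strictly increasing for each $s_0 > 0$ and has no finite limit, hence exhausts $R$; equivalently $f^{-n}$ contracts every point of $R$ to $p^\sigma_i$, so $R \subset W^{su}(p^\sigma_i)$. Density of $W^{su}(p^\sigma_i)$ in $S_g$ follows since the vertical leaf carrying $R$ is dense.

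The main obstacle is the local step in (i): showing that the $U_\sigma$-slice on a line $\{y = y^{(n)}\}$ is precisely an interval whose two endpoints lie on local stable manifolds of the saddles. This requires controlling the 2D dynamics on each such slice as a small perturbation of the 1D dynamics of $h$ on the invariant $x$-axis, using that the slices converge exponentially to the axis. A secondary subtlety in (ii) is excluding fixed points of $f$ on $R$ outside $V_\sigma$; this crucially uses the no-saddle-connection property of the underlying pseudo-Anosov foliation.
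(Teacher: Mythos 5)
Your part (i) is essentially the paper's own argument: the paper also pushes the curve forward by $f^n$ until its starting point is close to $\sigma$, then argues that the first exit from $U_\sigma$ along the vertical line occurs inside a linearization (Grobman--Hartman) rectangle of one of the saddles $p^{\sigma}_j$, hence on its local stable manifold, and concludes by $f$-invariance of $W^{ss}(p^{\sigma}_j)$. The ``slice'' lemma you single out as the main obstacle -- that the $U_\sigma$-component of the line $\{y=y^{(n)}\}$ is an interval lying in $V_\sigma$ whose endpoints sit on local stable manifolds -- is exactly the local content of that step; the paper handles it with the rectangles at roughly the level of detail you sketch (points of the slice strictly between $\sigma$ and the local stable manifold flow down into $B(\sigma,|p^{\sigma}|)\subset U_\sigma$, while the crossing point itself lies in $K$), so this is the shared crux rather than a divergence, and it is provable along the lines you indicate.

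Part (ii) takes a genuinely different route. The paper argues by contradiction: the infimum $t_0$ of parameters on the ray not in $W^{su}(p^{\sigma}_i)$ is shown to be a fixed point of $f$, which (being neither conical nor some $p^{\sigma'}_j$, by absence of saddle connections) is hyperbolic with vertical unstable direction, contradicting that points just below it converge to $p^{\sigma}_i$ under $f^{-1}$. You instead classify all fixed points of $f$ on the ray $R$ (none beyond $p^{\sigma}_i$) and conclude by monotonicity along the invariant separatrix; this is a correct alternative using the same two ingredients (no saddle connections, the explicit form of $f$ in the perturbation charts), and it buys a slightly more global picture at the cost of the case analysis. Two points to tighten. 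First, ``$R\cap V_\sigma$: direct inspection of $h$'' only covers the portion of $R$ on the prong through $p^{\sigma}_i$; since the leaf is dense, $R$ re-enters $V_\sigma$ later away from the prongs, where $f$ is not described by $h$ -- but the argument you already use for $V_{\sigma'}$ (a fixed point in a perturbation chart must have vanishing contracted coordinate, hence lie on a prong, which $R$ avoids because the vertical foliation has no saddle connections, including self-connections at $\sigma$) closes this case, so the gap is minor and fixable. Second, your parenthetical that density of the leaf rules out $\varphi$-fixed points on it is not the right reason; the correct one is the $\lambda$-stretching of leaf arc length from the fixed endpoint $\sigma$, which you state just before. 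Note also that both your proof and the paper's implicitly require $h$ to have no fixed point in $\,]|p^{\sigma}|,\alpha_\sigma]$ (true when $k$ is strictly decreasing on $(0,1)$, as for the model bump $(1-r^2)^2$); otherwise statement (ii) itself fails, so this is a hypothesis on $k$, not a defect of your argument.
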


\begin{proof} We begin with the first point. Let $\delta > 0$ be the length of the smallest side in the (finite) collection of rectangle neighbourhoods of points $p^{\sigma}_i$ given by the Grobman--Hartman theorem. 
For $n$ large enough, we find that $f^n(x)$ is $\delta/2$-close to some $\sigma \in \Sigma$. Once close to $\sigma$ by going horizontally along $f^n \circ \gamma$, the first time $f^n \circ \gamma$ intersects $K$ is at some point contained in one of the rectangle neighbourhood of some $p^{\sigma}_i$. Therefore this intersection point belongs to $\bigsqcup\limits_{1 \leqslant i \leqslant 2n_{\sigma}} W^{ss}(p_{i}^{\sigma})$ and is attained at $f^n(\gamma(1))$. The result then follows from the invariance by $f$ of the stable leaves.

We now prove the second point. Let $\gamma : [0, +\infty [ \to S_g$ be a unit speed parametrization of the horizontal ray starting at $\sigma \in \Sigma$ and containing $p \coloneqq p^{\sigma}_i$. In particular, $\gamma(0)=\sigma$ and $\gamma(|p^{\sigma}|)=p$. 

By contradiction, assume there exists $t\geqslant |p^{\sigma}|$ such that $\gamma(t) \notin W^{su}(p)$. Let $t_0 = \inf \{ t \geqslant |p^{\sigma}| \mid \gamma(t) \notin W^{su}(p) \}$.

We now show that $t_0 > |p^{\sigma}|$. Let $h : t \mapsto (\lambda + \beta_{\sigma} k (t/\alpha_{\sigma}))t$. By construction of $f$, we have the relation $f(\gamma(t))=\gamma(h(t))$ for every $t \in [0, \alpha_{\sigma}[$, and hence $f^n(\gamma(t))=\gamma(h^n(t))$ for all $n \in \mathbbm{Z}$. Now $(h^{-1})'(|p^{\sigma}|) < 1$, so for $t$ close to $|p^{\sigma}|$, $f^{-n}(\gamma(t)) \to p$ as $n$ goes to infinity. Therefore $t_0 > |p^{\sigma}|$.

We now prove that $\gamma(t_0)$ is a fixed point of $f$. We know that $f(\gamma(]|p^{\sigma}|,t_0[) = \gamma(]|p^{\sigma}|,s[)$ for some $s$. But $f(\gamma(]|p^{\sigma}|,t_0[) \subset W^{su}(p)$. Hence $s \leqslant t_0$.

By contradiction, assume there exists $\varepsilon > 0$ such that $s + \varepsilon < t_0$. So $\gamma([|p^{\sigma}|, s + \varepsilon[) \subset W^{su}(p)$, and so $f^{-1} \circ \gamma ([|p^{\sigma}|,s+\varepsilon [) \subset W^{su}(p)$. However, $f^{-1} \circ \gamma ([|p^{\sigma}|,s+\varepsilon [) \subset W^{su}(p) = \gamma([|p^{\sigma}|,t_0 + \delta_{\varepsilon}[)$ for some $\delta_{\varepsilon}>0$ since $f$ is strictly preserving horizontal orientation. This contradicts the definition of $t_0$. Therefore $s=t_0$ and $\gamma(t_0)$ is fixed by $f$.

The point $\gamma(t_0)$ cannot be in $\Sigma$ nor be a $p^{\sigma}_i$, otherwise $\gamma$ would connect two conical points, which is impossible.  By computing the differential of $f$ at $\gamma(t_0)$, we see that $\gamma(t_0)$ is a hyperbolic fixed point of $f$ with a horizontal unstable leaf. Therefore there exist points whose iterates by $f^{-1}$ converge to $p$ and to $\gamma(t_0) \neq p$. 
\end{proof}

These properties of stable and unstable leaves yield to the fact that the set $K$ can be written as a finite union of closure of stable leaves. In fact, we have the following slightly stronger result.

\begin{proposition}\label{prop:K_as_Finite_Union}
The compact set $K$ can be written as a finite union of closed invariant sets as follow $K = \bigcup\limits_{\sigma \in \Sigma} \bigcup\limits_{i=1}^{n_{\sigma}} \overline{W^{ss}(p^{\sigma}_i) \cap W^{su}(p^{\sigma}_i) }$ .
\end{proposition}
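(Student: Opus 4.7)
\medskip
\noindent\textbf{Proof plan.} The plan is to prove the two inclusions of the claimed equality separately.

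For $\bigcup_{\sigma,i} \overline{W^{ss}(p^\sigma_i) \cap W^{su}(p^\sigma_i)} \subset K$, since $K$ is closed it is enough to check $W^{ss}(p^\sigma_i) \subset K$. If some $y$ in $W^{ss}(p^\sigma_i)$ were to lie in a basin $U_{\sigma'}$, then $f^n(y)$ would converge simultaneously to $p^\sigma_i$ (by definition of the strong stable leaf) and to $\sigma'$ (by definition of the basin), contradicting $p^\sigma_i \neq \sigma'$. Therefore $W^{ss}(p^\sigma_i) \cap W^{su}(p^\sigma_i) \subset K$, and the same holds after closure.

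For the reverse inclusion $K \subset \bigcup_{\sigma,i} \overline{W^{ss}(p^\sigma_i) \cap W^{su}(p^\sigma_i)}$, fix $x \in K$. The previous proposition (density of $U_\Sigma$ along verticals) provides a sequence $y_n \to x$ lying on a vertical segment through $x$, with $y_n \in U_{\sigma_n}$. Extracting, we may assume $\sigma_n$ equals a fixed $\sigma$. Travelling from $y_n$ towards $x$ along the vertical segment, let $z_n$ be the first point that exits $U_\sigma$; such a point exists because $x \in K$, and Proposition~\ref{prop:stable_leaves}(i) puts it in $W^{ss}(p^\sigma_{i_n})$ for some index $i_n$, rendered constant equal to $i$ after a further extraction. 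Since $z_n$ lies on a vertical between $y_n$ and $x$, we have $z_n \to x$, hence $x \in \overline{W^{ss}(p^\sigma_i)}$.

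The remaining — and most delicate — step is to approximate each $z_n$ by an actual point of $W^{ss}(p^\sigma_i) \cap W^{su}(p^\sigma_i)$. For $n$ large, $z_n$ is close to $x$; I would work in a flat chart around $z_n$ chosen outside the perturbation neighbourhoods $V_\sigma$ and the auxiliary horizontal fixed points $q^\sigma_j$ (when $x$ itself lies in a perturbation region, first shift by a forward iterate of $f$ to reduce to this case and then pull back). In such a chart the horizontal and vertical foliations coincide with the stable and unstable foliations of $f$, so $W^{ss}_{\mathrm{loc}}(z_n)\subset W^{ss}(p^\sigma_i)$ is a horizontal arc; by Proposition~\ref{prop:stable_leaves}(ii), $W^{su}(p^\sigma_i)$ contains a semi-infinite vertical leaf that is dense in $S_g$, so arbitrarily many visits of this leaf to the chart appear as nearly vertical arcs and must cross the transverse horizontal arc $W^{ss}_{\mathrm{loc}}(z_n)$, producing homoclinic points of $p^\sigma_i$ arbitrarily close to $z_n$. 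A diagonal extraction then yields homoclinic points of $p^\sigma_i$ converging to $x$, which gives $x \in \overline{W^{ss}(p^\sigma_i) \cap W^{su}(p^\sigma_i)}$ and finishes the proof. The main obstacle I anticipate is precisely this transversality/density step, and the reason I expect it to go through is that the alignment of the $f$-invariant foliations with the flat horizontal and vertical directions — valid outside the perturbation regions — makes the intersections between pieces of $W^{su}(p^\sigma_i)$ and $W^{ss}_{\mathrm{loc}}(z_n)$ genuinely transverse.
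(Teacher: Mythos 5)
Your overall strategy for the main inclusion $K \subset \bigcup_{\sigma,i} \overline{W^{ss}(p^{\sigma}_i) \cap W^{su}(p^{\sigma}_i)}$ -- reach a stable leaf at the first exit point $z_n$ of $U_{\sigma}$ along a vertical through $x$ (via Proposition \ref{prop:stable_leaves}(i)), then approximate $z_n$ by homoclinic points obtained by intersecting that stable leaf with the dense vertical unstable leaf of Proposition \ref{prop:stable_leaves}(ii) -- is the same as the paper's, and your argument for the easy inclusion $\overline{W^{ss}\cap W^{su}} \subset K$ is correct (the paper treats it as immediate). The problem is in how you justify the intersection step, which you yourself flag as the delicate point.

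You claim that in a flat chart away from the perturbation neighbourhoods $V_{\sigma}$ ``the horizontal and vertical foliations coincide with the stable and unstable foliations of $f$'', so that $W^{ss}_{\mathrm{loc}}(z_n)$ is a horizontal arc. This is true for the unstable foliation (it is the vertical one everywhere), but false for the stable one: by Theorem \ref{thm:K_is_hyperbolic}, $E^s(x)=\mathbb{R}v^s(x)$ with $v^s(x)=e_h-\sum_{i\geqslant 0} s_i(x)\, e_v$, and the coefficients $s_i(x)$ involve $b(f^i(x))$ along the \emph{entire forward orbit} of $x$. Every point of $W^{ss}(p^{\sigma}_i)$ has forward orbit converging to $p^{\sigma}_i$, which lies inside the perturbed region, so the $e_v$-component of $v^s$ does not vanish along the stable arc through $z_n$ even when $z_n$ itself lies outside all $V_{\sigma}$; the arc is only transverse to the verticals, not horizontal. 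For the same reason your reduction ``shift by a forward iterate of $f$ and pull back'' cannot place the relevant orbit outside the perturbation region (forward iterates of points of $W^{ss}(p^{\sigma}_i)$ end up near $p^{\sigma}_i\in V_{\sigma}$). The step can be repaired, but with a different justification: the series $\sum_i|s_i|$ is uniformly bounded on $K$ (Theorem \ref{thm:K_is_hyperbolic}), so stable arcs are graphs of uniformly bounded slope over the direction transverse to $e_v$, and a sufficiently long vertical segment of the dense unstable leaf passing close enough to $z_n$ must cross such a graph; you would also need local stable arcs of definite size through the non-fixed points $z_n$ (stable manifold theorem for the hyperbolic set $K$, or $W^{ss}(p^{\sigma}_i)=\bigcup_m f^{-m}(\gamma_{\mathrm{loc}})$). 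The paper avoids both issues by pushing $y$ forward into the Grobman--Hartman rectangle at $p^{\sigma}_i$, producing the homoclinic point there (local stable manifold of the fixed point crossed by a vertical piece of $W^{su}(p^{\sigma}_i)$), and pulling it back by $f^{-n}$ using uniform continuity to keep it within $3\varepsilon$ of $x$.
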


\begin{proof}
Let $x \in K$ and $\varepsilon >0$. Let $y \in U_{\Sigma}$ be in the same horizontal leaf as $x$ and obtained by \emph{going leftward} by a distance less than $\varepsilon$. Since $U_{\Sigma} = \bigsqcup\limits_{\sigma} U_{\sigma}$, there exists $\sigma \in \Sigma$ such that $y \in U_{\sigma}$. From the Grobman--Hartman theorem, for each $1 \leqslant i \leqslant 2 n_{\sigma}$ there exists a neighbourhood of $p^{\sigma}_i$ on which the dynamic of $f$ is the same as the one of the differential of $f$. Without loss of generality, we assume that these neighbourhoods are rectangles with vertical and horizontal sides and with centers the $p^{\sigma}_i$'s. Up to replacing these rectangles by smaller ones, let $\delta_{\sigma}$ be a common vertical size for these rectangles.

For $n\geqslant 0$ large enough, the point $y$ lies in $B(\sigma, \delta_{\sigma}/4)$. By construction and by the first point of Proposition~\ref{prop:stable_leaves}, we know that by going \emph{rightward} from $y$ we cross some $W^{ss}(p^{\sigma}_i)$, for some $1 \leqslant i \leqslant 2 n_{\sigma}$. Therefore, by going \emph{rightward} from $f^{-n}(y)$ we cross the rectangle of linearisation associated with $p^{\sigma}_i$, and hence the stable leaf $W^{ss}(p^{\sigma}_i)$ at some point $y^r$.

Let $\delta$ be the modulus of absolute continuity of $f^{-n}$ associated with $\varepsilon$. By density of the unstable leaf of $p^{\sigma}_i$, we can chose a point $z$ such that $\D(f^n(z),f^n(y))< \min(\delta, \delta_{\sigma}/4)$ so that by going \emph{rightward} from $f^n(z)$ we cross $W^{ss}(p^{\sigma}_i)$ at some point $z^r$, at distance less than $\delta$ from $y^r$. Finally, the point $f^{-n}(z^r) \in W^{ss}(p^{\sigma}_i) \cap W^{su}(p^{\sigma}_i)$ is at distance less than $3\varepsilon$ from $x$.
\end{proof}

Finally, we explicit stable and unstable foliations such that the set $K$ is hyperbolic with respect to $f$. To do this, we compute a vector field that is uniformly contracted by the differential of $f$.

\begin{theorem}\label{thm:K_is_hyperbolic}
The set $K$ is hyperbolic. The invariant distributions are $E^u(x) = \mathbbm{R}e_h$ and $E^s(x) = \mathbbm{R}v^s(x)$, $x \in K$, where \begin{align}\label{eq:definition_v^s}
v^s(x) \coloneqq e_v - \sum\limits_{i \geqslant 0} \lambda^{-i} b(f^i(x)) \prod\limits_{j=0}^{i} \frac{1}{a(f^j(x))} \, e_h,
\end{align}
with $a(x) \coloneqq \langle \D_x f \cdot e_h, e_h \rangle$ and $b(x) \coloneqq \langle \D_x f \cdot e_v, e_h \rangle$. In particular, $v^s$ satisfies $\D f \, v^s = \lambda^{-1} v^s \circ f$ on $K$.
\end{theorem}

\begin{proof}
We will explicit the stable and the unstable directions of the splitting of the tangent space. 
Write the differential of $f$ at $x \in S_g \smallsetminus \Sigma$ in the basis $(e_h,e_v)$
\begin{align*}
\D_x f = \begin{pmatrix} a(x) & b(x) \\ 0 & \lambda^{-1} \end{pmatrix}.
\end{align*}
Therefore, for every positive integer $n$, we have the following,
\begin{align*}
\D_x (f^n) &= \D_{f^{n-1}(x)} f \cdots \D_{f(x)} f \, \D_x f ,\\
&= \begin{pmatrix} a(f^{n-1}(x)) & b(f^{n-1}(x)) \\ 0 & \lambda^{-1} \end{pmatrix} \cdots \begin{pmatrix} a(f(x)) & b(f(x)) \\ 0 & \lambda^{-1} \end{pmatrix}  \begin{pmatrix} a(x) & b(x) \\ 0 & \lambda^{-1} \end{pmatrix}, \\
&= \begin{pmatrix} A_n(x) & B_n(x) \\ 0 & \lambda^{-n} \end{pmatrix}.
\end{align*}
We have that $A_n(x) = \prod\limits_{i=0}^{n-1} a(f^{i}(x))$. We compute $B_n$ explicitly. This sequence satisfies a recursive formula, which can be solved
\begin{align*}
B_{n+1} - a(f^{n}) B_n &= \lambda^{-n} b(f^{n}), \\
B_{n+1}/A_{n+1} - B_n/A_n &= \lambda^{-n} b(f^{n})/A_{n+1}, \\
B_n/A_n &= \sum\limits_{i= 0}^{n-1} \lambda^{-i} b(f^{i})/A_{i+1} .
\end{align*}
Finally we get \[ \frac{B_n}{A_n}(x) = \sum\limits_{i=0}^{n-1} \lambda^{-i} b(f^i(x)) \prod\limits_{j=0}^{i}\frac{1}{a(f^{j}(x))}. \]

We can now explicit the eigenvectors of $\D_x (f^n)$. The obvious one, associated with the eigenvalue $A_n(x)$, is $e_h$. The other one is \[ v_n(x) = \begin{pmatrix} -B_n(x)/A_n(x) \\ 1- \lambda^{-n} /A_n(x) \end{pmatrix} = \begin{pmatrix} -\sum\limits_{i=0}^{n-1} \lambda^{-i} b(f^i(x)) \prod\limits_{j=0}^{i}\frac{1}{a(f^{j}(x))} \\ 1 - \prod\limits_{i=0}^{n-1}\frac{1}{\lambda a(f^i(x))} \end{pmatrix} . \]
We now study the convergence of the $v_n$'s as $n$ goes to infinity. First, since $a > 1$, $b$ are continuous functions over the compact set $K$, there exist constants $a^*$ and $C$ such that $a > a^* > 1$ and $|b| < C$. Therefore, the second coordinate converges to $1$ as $n$ goes to infinity. For the first coordinate, we have the uniform bound over $K$
\begin{align*}
\sum\limits_{i=0}^{n-1} \left| \lambda^{-i} b(f^i(x)) \prod\limits_{j=0}^{i}\frac{1}{a(f^{j}(x))} \right| &\leqslant C \sum\limits_{i=0}^{n-1} (\lambda a^*)^{-i} \leqslant C \frac{\lambda a^*}{\lambda a^* - 1}.
\end{align*}
Hence, the series of continuous functions converges uniformly over $K$ to a continuous function. Call $v^s$ the limit of $v_n$ as $n$ goes to infinity.

A short computation shows that for all $x$ in $K$, $v^s$ satisfies $\D_x f \cdot v^s(x) = \lambda^{-1} v^s(f(x))$. Finally, we get the following splitting of the tangent space at each $x$ in $K$, $T_x S_g = \mathbbm{R}v^s(x) \oplus \mathbbm{R}e_h$, so that $K$ is a hyperbolic set.
\end{proof}

\section{Smoothness of the stable foliation and renormalized flow}\label{sect:technicalities}

In this section we prove that $v^s$ can be extended to the whole set $S_g \smallsetminus \Sigma$ of regular points, such that the extension is still uniformly contracted by the action of $f$ (\ref{eq:commut_diff}) and so that it is Lipschitz continuous. Under further assumption on the smoothness of $f$, we prove that $v^s$ is $\mathcal{C}^1$. Furthermore, in view of the next section, we prove that $v^s$ depends continuously on the parameter $\beta$ -- occuring in the construction of $f$. This regularity property will be crucial in Section~\ref{sect:f_is_mixing}. Since $v^s$ is Lipschitz continuous, it can be integrated into a continuous flow $h_t$ which enjoys the commutation relation (\ref{eq:commut_flow}) with $f$ -- in other words, $f$ renormalizes $h_t$. From the properties of $h_t$, we show that the set $K$ is connected, transverse to any vertical leaf, and that $f$ is transitive with respect to the trace topology on $K$.

\subsection{Construction of a useful open cover of $S_g$} 

In order to proceed, we first need to construct an open cover of $S_g \smallsetminus \Sigma$ such that $f$ satisfies some nice estimates on elements of this cover. This is done in the following proposition.

\begin{proposition}\label{prop:open_cover}
For all $\varepsilon>0$ small enough, there exist $\eta > 0 $, $\delta >0$, and an open cover $S_g = A_{\eta} \cup \bigsqcup\limits_{\sigma \in \Sigma} B_{\sigma,\delta}$ such that $a> 1 + \eta$ on $A_{\eta}$ and $\D(f(x), \sigma) < (1 - \delta) \D(x,\sigma)$ on $B_{\sigma,\delta} \smallsetminus \{ \sigma \}$.
\end{proposition}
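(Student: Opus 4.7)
The plan is to exploit the explicit formula for $f$ in the branched chart at each conical point, establish a pointwise dichotomy between the two conditions of the proposition, and then promote it to uniform constants by a compactness argument.

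In the branched chart at $\sigma$ in which $f(x+iy) = (h_\sigma(r)x,\lambda^{-1}y)$ with $h_\sigma(r) := \lambda + \beta_\sigma k_\sigma(r/\alpha_\sigma)$ and $r=\sqrt{x^2+y^2}$, direct differentiation yields
\begin{equation*}
a(x,y) = h_\sigma(r) + h'_\sigma(r)\,\frac{x^2}{r}, \qquad \D(f(z),\sigma)^2 = h_\sigma(r)^2 x^2 + \lambda^{-2} y^2.
\end{equation*}
The function $h_\sigma$ is monotone increasing on $[0,\alpha_\sigma]$ from $h_\sigma(0)=\lambda+\beta_\sigma<1$ to $h_\sigma(\alpha_\sigma)=\lambda$, crosses $1$ at $r_0^\sigma=|p^\sigma|$, and satisfies $h'_\sigma\geq 0$; outside the perturbation regions $f=\varphi$ and $a\equiv\lambda$. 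I first prove the pointwise dichotomy: for every $z\in S_g\smallsetminus\Sigma$ either $a(z)>1$, or $\D(f(z),\sigma)<\D(z,\sigma)$ for some $\sigma$. Outside all perturbation regions this is immediate. Inside the perturbation region of some $\sigma$: if $h_\sigma(r)>1$ then $a(z)\geq h_\sigma(r)>1$; if $h_\sigma(r)<1$ then $\D(f(z),\sigma)^2 \leq \max(h_\sigma(r)^2,\lambda^{-2})\,r^2 < r^2$. The only borderline points (where $r=r_0^\sigma$) at which neither strict inequality is automatic are the fixed points $p^\sigma$, for which $a(p^\sigma) = 1 + h'_\sigma(r_0^\sigma)r_0^\sigma>1$ by the hyperbolicity established in Proposition~\ref{prop:existence_ball_in_basin}, and the points $q^\sigma_i$, at which $\D(f(q^\sigma_i),\sigma)=\lambda^{-1}\D(q^\sigma_i,\sigma)$.

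To uniformize, I fix $r_0>0$ small enough that $h_\sigma(r)\leq c_\sigma<1$ on $\{r\leq r_0\}$ for every $\sigma$ and that the balls $B(\sigma,r_0)$ are pairwise disjoint (using $r_0<\delta_\Sigma/2$). The explicit formula above then gives a uniform $\delta_0>0$ with $\D(f(z),\sigma)\leq(1-\delta_0)\D(z,\sigma)$ on $B(\sigma,r_0)\smallsetminus\{\sigma\}$. On the compact set $K_0 := S_g\smallsetminus\bigsqcup_\sigma B(\sigma,r_0/2)$, the continuous function
\begin{equation*}
M(z) := \max\!\left(a(z)-1,\ \max_{\sigma\in\Sigma}\Bigl(1-\tfrac{\D(f(z),\sigma)}{\D(z,\sigma)}\Bigr)\right)
\end{equation*}
is strictly positive at every point by the dichotomy, hence bounded below by some $\delta_1>0$. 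Setting $\eta=\delta:=\tfrac12\min(\delta_0,\delta_1)$, $A_\eta:=\{z\in S_g : a(z)>1+\eta\}$, and $B_{\sigma,\delta}:=\{\sigma\}\cup\{z\in B(\sigma,r_0)\smallsetminus\{\sigma\}:\D(f(z),\sigma)<(1-\delta)\D(z,\sigma)\}$ yields the desired cover: both sets are open by continuity, the $B_{\sigma,\delta}$ are pairwise disjoint by the choice of $r_0$, the two conditions hold by construction, and any $z\notin\bigsqcup_\sigma B_{\sigma,\delta}$ either lies in $K_0$, where $M(z)\geq\delta_1>\eta$ forces $a(z)>1+\eta$, or lies in some $B(\sigma,r_0)$---but then the $\D$-contraction is automatic by the choice of $r_0$, contradicting $z\notin B_{\sigma,\delta}$.

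The hard part is the transition zone $h_\sigma(r)\approx 1$, which contains the two degenerate orbits $p^\sigma$ (where only $a>1$ is strict) and $q^\sigma_i$ (where only $\D$-contraction is strict): on the circle $\{r=r_0^\sigma\}$ these are the only points where one of the two inequalities is not automatically strict. This is precisely why the argument must go through a global compactness-plus-continuity step rather than a naive local cover by balls of a single radius.
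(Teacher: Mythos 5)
There is a genuine gap: the proposed sets do not cover the points $q^{\sigma}_i$. At $q^{\sigma}_i$ (in the chart: $x=0$, $|y|=|p^{\sigma}|$) your own formula gives $a(q^{\sigma}_i)=h_{\sigma}(|p^{\sigma}|)=1$, so $q^{\sigma}_i\notin A_{\eta}$ for any $\eta>0$, and in fact a whole neighbourhood of $q^{\sigma}_i$ misses $A_{\eta}$; on the other hand your $B_{\sigma,\delta}$ is contained in $B(\sigma,r_0)$, and the requirement $h_{\sigma}\leqslant c_{\sigma}<1$ on $\{r\leqslant r_0\}$ forces $r_0<|p^{\sigma}|$, so $q^{\sigma}_i\notin\bigsqcup_{\sigma'}B_{\sigma',\delta}$ either. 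The faulty step in the write-up is ``$M(z)\geqslant\delta_1>\eta$ forces $a(z)>1+\eta$'': a lower bound on a maximum of two quantities does not bound its first argument. At $z=q^{\sigma}_i\in K_0$ the maximum is realised by the contraction term ($1-\lambda^{-1}$) while $a(z)=1$. Your compactness argument only yields ``either $a>1+\eta$ or uniform contraction towards some $\sigma$'', and the second alternative does not place $z$ in your $B_{\sigma,\delta}$, precisely because you truncated that set to a small ball around $\sigma$. So, despite your closing remark correctly identifying the transition circle $\{r=r_0^{\sigma}\}$ as the hard part, the construction fails exactly there.

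The repair -- and the paper's route -- is that $B_{\sigma,\delta}$ cannot essentially be a small ball around $\sigma$: it must contain neighbourhoods of the $q^{\sigma}_i$. The paper sets $B^{\varepsilon}_{\sigma}=B(\sigma,|p^{\sigma}|)\cup\bigcup_i B(q^{\sigma}_i,\varepsilon)$, on which the non-uniform contraction $\D(f(x),\sigma)<\D(x,\sigma)$ holds; compactness of the complement (where $a>1$) yields $\eta$; then $B^{\varepsilon}_{\sigma}$ is replaced by the radially shrunk copy $\left(1-\tfrac{1}{n_0}\right)B^{\varepsilon}_{\sigma}$, which still covers together with $A_{\eta}$ by compactness, and the uniform $\delta$ is obtained by combining the explicit estimate in a small neighbourhood of $\sigma$ with compactness of the closure of the shrunk set away from $\sigma$. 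Your pointwise dichotomy and the explicit formulas for $a$ and for the contraction ratio are correct and could be salvaged along these lines, but as written the proposed family is not a cover of $S_g$.
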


\begin{proof}
By continuity of $f$, there exists an $\varepsilon > 0$ such that \[ \{ x \in V_{\sigma} \mid \D(f(x),\sigma) < \D(x,\sigma) \} \supset B(\sigma,|p^{\sigma}|)\cup \bigcup\limits_{i=1}^{2n_{\sigma}} B(q_i^{\sigma},\varepsilon) \eqqcolon B_{\sigma}^{\varepsilon}, \] for all $\sigma$, where $V_{\Sigma} = \bigsqcup\limits_{\sigma \in \Sigma} V_{\sigma}$ is the open neighbourhood of $\Sigma$ on which $f \not\equiv \varphi$.

Since $S_g \smallsetminus \bigsqcup\limits_{\sigma \in \Sigma} B_{\sigma}^{\varepsilon}$ is compact and $a >1$ on it, there exists $\eta >0$ such that $a > 1 + 2\eta$ on this compact set. Call $A_{\eta} = \{x \in S_g \mid a > 1 + \eta \}$. By construction, $S_g = A_{\eta} \cup \bigcup\limits_{\sigma \in \Sigma} B_{\sigma}^{\varepsilon}$.

Since all $B_{\sigma}^{\varepsilon}$ are open sets, radial and centred on $\sigma$, we have $B_{\sigma}^{\varepsilon} = \bigcup\limits_{n \geqslant 1} \left(1 - \tfrac{1}{n} \right) B_{\sigma}^{\varepsilon}$. Now, by compactness of $S_g$, there exists $n_0$ such that: \[ S_g = A_{\eta} \cup \bigcup\limits_{\sigma \in \Sigma} \left(1 - \tfrac{1}{n_0} \right) B_{\sigma}^{\varepsilon}. \]

On a small open neighbourhood $W_{\sigma}$ of $\sigma$, by construction of $f$ we have that $\D(f(x),\sigma)/ \D(x,\sigma) < C < 1$. Now, on the compact set $\overline{(1-\tfrac{1}{2n_0}) B_{\sigma}^{\varepsilon}} \smallsetminus W_{\sigma}$, the continuous function $\D(f(x),\sigma)/ \D(x,\sigma)$ is positive and strictly bounded from above by $1$. On the other hand, up to shrinking $W_{\sigma}$, the function $\D(f(x),\sigma)/ \D(x,\sigma)$ is bounded on $W_{\sigma} \smallsetminus \{ \sigma \}$ by $\max( \lambda^{-1}, \lambda + \beta_{\sigma} + \tilde{\delta}) < 1$, for some small $\tilde{\delta}>0$. Hence, there exists $\delta > 0$, independent of $\sigma$, such that for all $x$ in $(1- \tfrac{1}{n_0})B_{\sigma}^{\varepsilon} \smallsetminus \{ \sigma \}$, $\D(f(x),\sigma) < (1-\delta) \D(x,\sigma)$. We then call $B_{\sigma,\delta} = (1- \tfrac{1}{n_0})B_{\sigma}^{\varepsilon}$.
\end{proof}

\subsection{Lipschitz extension of $v^s$ to $S_g \smallsetminus \Sigma$}

Here we prove that the infinite sum in the definition of the vector field $v^s$ on $K$ does converge on all $S_g \smallsetminus \Sigma$. This way we can define $v^s$ on $S_g \smallsetminus \Sigma$. Furthermore, we prove that this extended vector field is Lipschitz continuous.

We proceed in two steps. First we show that $v^s$ is bounded and continuous on $S_g \smallsetminus \Sigma$. To do this, we need a lemma which follows directly from computation of $\mathrm{d}f$.

\begin{lemma}\label{lemma:min_a}
On each basin $U_{\sigma}$, the partial derivative $a = \langle \D f (e_h), e_h \rangle$ of $f$ is bounded from below by $\lambda + \beta_{\sigma} $.

The partial derivative $b = \langle \D f (e_v), e_h \rangle$ of $f$ is locally Lipschitz in some neighbourhood of $\Sigma$. Furthermore, by continuity we can set $b(\sigma)=0$ for each $\sigma \in \Sigma$.
\end{lemma}

\begin{theorem}\label{thm:vs_bounded_continuous}
If $\beta_{\sigma} \in \, ]-\lambda + \lambda^{-2}, -\lambda +1[$ for all $\sigma$ in $\Sigma$, then the vector field $v^s$ is bounded and continuous on $S_g \smallsetminus \Sigma$. Furthermore, by construction, the formula $\D f(v^s) = \lambda^{-1} v^s \circ f$ holds on $S_g \smallsetminus \Sigma$.
\end{theorem}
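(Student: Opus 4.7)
I would split the argument by cases, treating $x \in K$ and $x \in U_\sigma \subset S_g \setminus (K \cup \Sigma)$ separately. For $x \in K$, the convergence, boundedness, and commutation relation are exactly Theorem \ref{thm:K_is_hyperbolic}, so I focus on $x \in U_\sigma$. Three ingredients drive the argument in this case: (a) outside the perturbation region $V_\Sigma$, $f = \varphi$ is linear with diagonal differential, hence $b \equiv 0$; (b) by the first lemma above, $b$ is Lipschitz near each $\sigma$ with $b(\sigma) = 0$, so $|b(y)| \leq L_\sigma\, \D(y,\sigma)$ locally; (c) by the second lemma, $a(y) \geq \lambda + \beta_\sigma$ throughout $U_\sigma$.

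Since $f^i(x) \to \sigma$ for $x \in U_\sigma$, Grobman--Hartman provides a small forward-invariant neighborhood $\tilde V_\sigma \subset U_\sigma$ of $\sigma$ on which $\D(f(y),\sigma) \leq M\, \D(y,\sigma)$ for any chosen $M$ strictly larger than $\max(\lambda + \beta_\sigma, \lambda^{-1})$. Let $i^*(x) = \min\{i \geq 0 : f^i(x) \in \tilde V_\sigma\}$. For $i = i^* + k$, $k \geq 0$, combining (b), (c) and the linearization yields
\[
|T_i(x)| \;\leq\; C_\sigma\, \lambda^{-2 i^*(x)}\, \Bigl[\tfrac{M}{\lambda(\lambda + \beta_\sigma)}\Bigr]^{k},
\]
where $T_i(x) := \lambda^{-i} b(f^i(x)) \prod_{j=0}^i 1/a(f^j(x))$. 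Here observation (a) sharpens the bound on $\prod_{j<i^*} 1/a(f^j(x))$: whenever $f^j(x) \notin V_\Sigma$ one has $a(f^j(x)) = \lambda > 1$, so this partial product is controlled by $\lambda^{-i^*}$ up to a uniformly bounded number of indices in $V_\Sigma \setminus \tilde V_\sigma$.

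The hypothesis $\beta_\sigma > -\lambda + \lambda^{-2}$ is exactly what ensures $M/(\lambda(\lambda + \beta_\sigma)) < 1$: in the sub-case $\lambda + \beta_\sigma \geq \lambda^{-1}$ one takes $M$ just above $\lambda + \beta_\sigma$ and the condition reduces to $\lambda > 1$, automatic; in the sub-case $\lambda + \beta_\sigma < \lambda^{-1}$ one takes $M$ just above $\lambda^{-1}$ and the condition becomes $\lambda + \beta_\sigma > \lambda^{-2}$. Summing the geometric series gives $\sum_{i \geq i^*} |T_i(x)| \leq C_\sigma'\, \lambda^{-2 i^*(x)} \leq C_\sigma'$, uniformly in $x$. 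The remaining $i < i^*(x)$ terms are non-zero only when $f^i(x) \in V_\Sigma$, which again occurs at a uniformly bounded number of indices by (a), and contribute a uniformly bounded finite sum. Combining with the uniform bound on $K$ and taking the maximum over the finitely many $\sigma$, $v^s$ is bounded on $S_g \setminus \Sigma$. Continuity follows because each partial sum $S_N$ is continuous on $S_g \setminus \Sigma$ and the same geometric estimate provides uniform convergence on compact subsets. Finally, the commutation relation $\D f(v^s) = \lambda^{-1} v^s \circ f$ is equivalent, in the $(e_h, e_v)$ basis, to the functional identity $S(x) = (b(x) + \lambda^{-1} S(f(x)))/a(x)$, which is obtained from the series by shifting the summation index $i \mapsto i - 1$ and holds wherever $S$ is defined.

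The main obstacle I foresee is the uniform control of the pre-factor $[\lambda(\lambda + \beta_\sigma)]^{-i^*(x)}$ that would appear from a naive use of only (c): this factor can blow up with $i^*(x)$ in the regime $\lambda(\lambda + \beta_\sigma) < 1$, while $i^*(x)$ itself is unbounded for $x$ approaching $\partial U_\sigma \subset K$. The key point sidestepping this difficulty is observation (a): during the transit $j < i^*(x)$ the orbit mostly lies outside $V_\Sigma$, where $a = \lambda > 1$ improves the product bound enough that the pre-factor becomes $\lambda^{-2 i^*}$, which actually tends to zero. Verifying that the number of "intermediate" indices (those with $f^j(x) \in V_\Sigma \setminus \tilde V_\sigma$) is indeed uniformly bounded requires some care around the horizontal rays from $\sigma$ where $\mathrm{d}f$ can locally expand horizontally.
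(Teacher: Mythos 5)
Your overall architecture matches the paper's: split $x\in K$ (already handled by Theorem \ref{thm:K_is_hyperbolic}) from $x\in U_\sigma$, use $b(\sigma)=0$ together with the local Lipschitz bound on $b$ to get geometric decay of $|b(f^i(x))|$ after the orbit is captured by a small neighbourhood of $\sigma$, use $a\geqslant \lambda+\beta_\sigma$ there, observe that the hypothesis $\beta_\sigma>-\lambda+\lambda^{-2}$ is exactly what makes the post-capture ratio $\max(\lambda^{-1},\lambda+\beta_\sigma)/(\lambda(\lambda+\beta_\sigma))$ summable, and obtain the commutation relation from the functional identity $S(x)=(b(x)+\lambda^{-1}S(f(x)))/a(x)$ by an index shift. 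All of that is sound and is essentially the paper's proof. However, there is a genuine gap in your treatment of the transit phase $i<i^*(x)$. You claim that the number of indices $j<i^*(x)$ with $f^j(x)\in V_\Sigma\smallsetminus\tilde V_\sigma$ is uniformly bounded, and you use this twice: to control $\prod_{j<i^*}1/a$ by $\lambda^{-i^*}$ up to a constant, and to dismiss the pre-capture terms as a bounded finite sum. This claim is false: the set $K$ itself meets the perturbation neighbourhoods (the hyperbolic fixed points $p_i^{\sigma'}$ lie in $V_{\sigma'}$ at distance $|p^{\sigma'}|<\alpha_{\sigma'}$ from $\sigma'$, and large portions of $\overline{W^{ss}(p_i^{\sigma'})}$ run through $V_\Sigma$), so a point of $U_\sigma$ chosen close to $W^{ss}(p_1^{\sigma'})$ hovers near $p_1^{\sigma'}$, hence inside $V_\Sigma\smallsetminus\tilde V_\sigma$, for an arbitrarily long time — indeed for an arbitrarily large \emph{fraction} of its pre-capture history — before being ejected along the unstable leaf and eventually captured. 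With only your inputs (a) and (c), such indices can only be charged at the rate $(\lambda+\beta_\sigma)^{-1}$, which may exceed $1$; if a fraction $c$ of the transit lies in $V_\Sigma$ your bound on $\lambda^{-i}\prod_{j\leqslant i}1/a$ degrades to $[\lambda^{2-c}(\lambda+\beta_\sigma)^{\,c}]^{-i}$ up to constants, which fails to be summable (and even fails to be bounded) once $c$ is close to $1$ and $\lambda+\beta_\sigma<1$. So neither the prefactor $\lambda^{-2i^*(x)}$ nor the uniform bound on the pre-capture sum is justified as written.

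The missing ingredient is exactly the paper's Proposition \ref{prop:open_cover}: the correct dichotomy is not ``inside versus outside $V_\Sigma$'' but ``where $a>1+\eta$'' (the set $A_\eta$, which contains a neighbourhood of $K$, including $K\cap V_\Sigma$) versus the genuinely attracting regions $B_{\sigma,\delta}$ on which $\D(f(\cdot),\sigma)$ contracts by a definite factor. Since $B(\sigma,|p^\sigma|)\subset U_\sigma$ is absorbed monotonically (Proposition \ref{prop:existence_ball_in_basin}), an orbit in $U_\sigma$ spends at most a bounded number $n_V$ of iterates in $B_{\sigma,\delta}\smallsetminus V$, and all remaining pre-capture iterates lie in $A_\eta$, so $\prod_{j<i^*}1/a\leqslant (1+\eta)^{-(i^*-n_V)}(\lambda+\beta_\sigma)^{-n_V}$; with this replacement your estimate closes, the tail bounds become geometric uniformly in $x$, and your route to continuity via uniform convergence of the continuous partial sums on $S_g\smallsetminus\Sigma$ then works (and is in fact a little more direct than the paper, which instead identifies boundary limits of $v^s$ along sequences $U\ni x_n\to x\in K$ with $v^s(x)$ through a compactness argument using the hyperbolic splitting on $K$). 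As a minor point, Grobman--Hartman gives a topological conjugacy, not metric contraction rates; the bound $\D(f(y),\sigma)\leqslant M\,\D(y,\sigma)$ near $\sigma$ should be read off directly from the explicit formula for $f$, as the paper does.
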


\begin{proof}
Call $s_i = \lambda^{-i} \, b \circ f^i \, \prod\limits_{j=1}^{i} \tfrac{1}{a \circ f^j}$. Let $V$ be a neighbourhood of some $\sigma$ such that $b$ is Lipschitz on it and $f$ contracts by a factor $\max(\lambda^{-1}, \lambda + \beta_{\sigma} + \delta_{\sigma}) < 1$. Without loss of generality, we assume that $V$ is a ball centred at $\sigma$ of radius $\varepsilon$ and that $f(V) \subset V$. Since $U_{\sigma} = \bigcup\limits_{N \geqslant 0} f^{-N}V$, for all $x \in U_{\sigma}$ there exist some $N = N(x)$ and an integer $n_V$ which only depends on $V$, such that for all $n \geqslant N$, $f^n(x) \in V$, at most $n_V$ points of the orbits fall into $B_{\sigma,\delta} \smallsetminus V$ and the rest lives in $A_{\eta}$.

Let $x \in U_{\sigma}$, $x \neq \sigma$. Since $U_{\sigma} = \bigcup\limits_{n \geqslant 0} f^{-n}V$, let $N$ be the smallest integer such that $f^N(x) \in V$.
We distinguish three cases :
\begin{enumerate}[label=$\bullet$, wide]
\item $i \leqslant N - n_V$. Therefore $|s_i(x)| \leqslant \lambda^{-i} \left( \frac{1}{1+\eta} \right)^{i+1} \sup|b|$.
\item $N-n_V < i \leqslant N$. Hence $|s_i(x)| \leqslant \lambda^{-i} \left( \frac{1}{1+\eta} \right)^{N-n_V} \left( \frac{1}{\lambda + \beta} \right)^{i-(N-n_V)} \sup|b|$.
\item $i = j + N > N$. We get $|s_i(x)| \leqslant \lambda^{-(j+N)} \left( \frac{1}{\lambda + \beta } \right)^{j+N} \Lip(b) \varepsilon \max(\lambda^{-1}, \lambda + \beta_{\sigma} + \delta_{\sigma}) ^j$.
\end{enumerate}
Therefore, if $\lambda^{-2} < \lambda + \beta_{\sigma}$, then \[ \sum\limits_{i \geqslant 0} |s_i(x)| \leqslant \sup|b|  \frac{\lambda(1+\eta)}{ \lambda(1+ \eta) -1} \left( 1 + \sum\limits_{i=0}^{n_V} \left( \frac{1}{\lambda + \beta} \right)^i \right) + \frac{\Lip(b) \varepsilon}{ 1 - \frac{\max(\lambda^{-1}, \lambda + \beta_{\sigma} + \delta_{\sigma})}{\lambda(\lambda + \beta_{\sigma})}}, \]
which is uniform in $x$ on $U_{\sigma}$. Hence, the convergence is uniform on the compact subsets of $U_{\sigma}\smallsetminus \{ \sigma \}$ and $\sum s_i$ is continuous on $U_{\sigma}\smallsetminus \{ \sigma \}$, for all $\sigma \in \Sigma$.

We now show that this function defined on $U_{\Sigma} = \sqcup U_{\sigma}$ can be extended by continuity on $K$. Call $u(x) = e_v - \sum\limits_{i \geqslant 0} s_i(x) e_h$ the vector based at $x \in S_g \smallsetminus \Sigma$. 

Let $x \in K$ and, by density of $U_\Sigma$ in $S_g$, $(x_n)_n \in U_{\Sigma}^{\mathbbm{N}}$ such that $x_n \to x$ as $n$ goes to infinity. Since $(u(x_n))_n$ is bounded, up to extracting, the sequence converges to some $u_0$. Furthermore, by a diagonal argument and up to extracting, $u(f^k(x_n)) \to u_k$ for all $k \in \mathbbm{Z}$ as $n$ goes to infinity. Now, by construction of $u$, $\D f(u) = \lambda^{-1} u \circ f$. Hence, by continuity of $f$ and $\D f$, $\D_x (f^k)(u_0) = \lambda^{-k} u_k$. We now show that $u_0 = v^s(x)$. By hyperbolicity of $K$, there exist real numbers $x_s$, $x_u$ such that $u_0 = x_s v^s(x) + x_u e_h$. Therefore, by hyperbolicity of $K$, \begin{align*}
|x_u| &= ||x_u e_h||, \\
&= || \D_{f^k(x)} f^{-k} \D_x f^k x_u e_h ||, \\
&\leqslant C (\tfrac{1}{a_*})^k || \D_x f^k x_u e_h ||, \\
&= C (\tfrac{1}{a_*})^k || \D_x f^k (u_0 - x_s v^s(x))||, \\
&\leqslant C (\tfrac{1}{a_*})^k \lambda^{-k} (\sup||u|| + x_s \sup||v^s|| ),
\end{align*}
which goes to zero as $k$ goes to infinity. Hence $u_0 = x_s v^s(x)$. Now both $u_0$ and $v^s(x)$ have the same non-zero coordinate along $e_v$ in the base $(e_h,e_v)$. Hence $u_0 = v^s(x)$. Finally, $u$ extends continuously on $K$ by $v^s$. We call $v^s$ this vector field on $S_g \smallsetminus \Sigma$.
\end{proof}

We can now present the proof of the Lipschitz continuity of $v^s$ on $S_g \smallsetminus \Sigma$. To this end, we need a few more estimates on the differential of $f$ and on its coefficients.

\begin{lemma}
For all $x \in S_g \smallsetminus \Sigma$, the following estimate holds \[ \frac{||\diff_x f^n||}{ A_n(x)} \leqslant 2\max\left(1, \frac{|B_n|(x) + \lambda^{-n}}{A_n(x)} \right). \] 
In particular, $||\diff f^n|| / A_n$ is bounded on $\bigcup\limits_{i=0}^{n} f^{-i} A_{\eta}$. Furthermore, the bound $B$ can be chosen independently of $n$.
\end{lemma}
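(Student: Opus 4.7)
The first inequality is elementary linear algebra: since $\diff_x f^n$ has the upper-triangular form $\begin{pmatrix} A_n(x) & B_n(x) \\ 0 & \lambda^{-n} \end{pmatrix}$, its operator norm is bounded by the Frobenius norm, which in turn is bounded by the sum of absolute values of its entries. This gives $\|\diff_x f^n\| \leq A_n(x) + |B_n(x)| + \lambda^{-n}$. Dividing by $A_n(x)>0$ yields $\|\diff_x f^n\|/A_n(x) \leq 1 + (|B_n(x)|+\lambda^{-n})/A_n(x)$, and the trivial inequality $1+t \leq 2\max(1,t)$ completes this first part.

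For the uniform boundedness on $\bigcup_{i=0}^{n} f^{-i} A_\eta$, the strategy is to bound the two terms $|B_n|/A_n$ and $\lambda^{-n}/A_n$ separately, uniformly in $n$. The first bound is essentially free from the preceding theorem: since $|B_n(x)|/A_n(x) = \bigl|\sum_{i=0}^{n-1} s_i(x)\bigr| \leq \sum_{i \geq 0}|s_i(x)|$, the estimates derived in the proof of Theorem \ref{thm:vs_bounded_continuous} already deliver an absolute upper bound on this series, uniform over all $x \in S_g \smallsetminus \Sigma$ and independent of $n$.

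The bound on $\lambda^{-n}/A_n(x) = \prod_{i=0}^{n-1}[\lambda a(f^i(x))]^{-1}$ is the heart of the lemma. The plan is first to observe that, up to a small shrinkage of the open sets $B_{\sigma,\delta}$ furnished by Proposition \ref{prop:open_cover}, each $B_{\sigma,\delta}$ is positively invariant under $f$; this is a direct consequence of the contraction $\D(f(y),\sigma) < (1-\delta)\D(y,\sigma)$ combined with the concrete radial shape of $B_{\sigma,\delta}$. Granted this invariance, if $f^k(x) \in A_\eta$ for some $0 \leq k \leq n$, then no earlier iterate can lie in any $B_{\sigma,\delta}$ (otherwise the orbit would be trapped), so $f^i(x) \in A_\eta$ for every $0 \leq i \leq k$. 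Consequently the first $k+1$ factors of the product are each at most $[\lambda(1+\eta)]^{-1} < 1$, which gives $\prod_{i=0}^{k}[\lambda a(f^i(x))]^{-1} \leq 1$.

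For the tail indices $i > k$, all iterates $f^i(x)$ lie in a single $B_{\sigma_0,\delta}$ and converge geometrically to $\sigma_0$ at rate $(1-\delta)$. I would then split these remaining factors into the three regimes already employed in the proof of Theorem \ref{thm:vs_bounded_continuous} -- iterates far from $\sigma_0$ (where $a = \lambda$), a bounded transition window of size $n_V$, and iterates deep inside a small neighborhood of $\sigma_0$ where $a$ is close to $\lambda + \beta_{\sigma_0}$ up to a Lipschitz error controlled by $(1-\delta)^{i-k-1}$ -- and invoke the sharp threshold $\lambda + \beta_{\sigma_0} > \lambda^{-2}$ from the hypothesis on $\beta_\sigma$ to secure convergence of the resulting geometric series. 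This yields a bound on $\prod_{i=k+1}^{n-1}[\lambda a(f^i(x))]^{-1}$ independent of $n$, $k$, and $x$. The main obstacle lies precisely in this tail estimate: because $\lambda a$ can drop below $1$ arbitrarily close to a conical point, a crude pointwise lower bound on $a$ is insufficient, and one must exploit the orbit's geometric contraction toward $\sigma_0$ together with the Lipschitz behavior of $b$ near $\Sigma$ to offset the potentially "bad" factors.
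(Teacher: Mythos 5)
Your first step (the elementary bound $\|\diff_x f^n\|\leqslant A_n+|B_n|+\lambda^{-n}$ followed by $1+t\leqslant 2\max(1,t)$) and your treatment of $|B_n|/A_n$ as a partial sum of $\sum_i s_i$, bounded uniformly by the estimates of Theorem \ref{thm:vs_bounded_continuous}, are correct and are essentially what the paper does. The gap is in your third step, the bound on $\lambda^{-n}/A_n(x)=\prod_{i=0}^{n-1}\bigl(\lambda\, a(f^i(x))\bigr)^{-1}$. First, the inference ``$f^k(x)\in A_\eta$ for some $k\leqslant n$ implies $f^i(x)\in A_\eta$ for all $i\leqslant k$'' does not follow from positive invariance of the sets $B_{\sigma,\delta}$: the sets $A_\eta$ and $B_{\sigma,\delta}$ of Proposition \ref{prop:open_cover} form an open \emph{cover} of the connected surface, not a partition, so they overlap, and an orbit trapped in $B_{\sigma,\delta}$ can perfectly well have $f^k(x)\in B_{\sigma,\delta}\cap A_\eta$. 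Second, and more fundamentally, the tail estimate you plan cannot be carried out: near $\sigma_0$ one has $a\to\lambda+\beta_{\sigma_0}$, and the standing hypothesis only guarantees $\lambda(\lambda+\beta_{\sigma_0})>\lambda^{-1}$, which may well be $<1$ (e.g.\ $\beta_{\sigma_0}$ close to $\lambda^{-2}-\lambda$). Then each tail factor $\bigl(\lambda\, a(f^i(x))\bigr)^{-1}$ tends to a constant $>1$ and the product over a tail of length $n-k$ grows geometrically in $n-k$; the Lipschitz behaviour of $b$ near $\Sigma$ is of no help here, since $b$ does not appear in this product (it is what rescues the series $\sum_i|s_i|$, where the extra $\lambda^{-i}$ and the decay of $b\circ f^i$ do the work). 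In fact, for a fixed $x\in A_\eta$ lying in a basin $U_{\sigma}$ with $\lambda+\beta_{\sigma}<\lambda^{-1}$, the quantity $\lambda^{-n}/A_n(x)$ genuinely diverges as $n\to\infty$, so the ``some iterate in $A_\eta$'' reading of the statement cannot be proved this way.

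The paper's proof is much more modest: after the norm computation it simply uses $\lambda^{-k}/A_k(x)<(\lambda(1+\eta))^{-k}\leqslant 1$, i.e.\ the bound along orbit segments lying in $A_\eta$ (where $\lambda a>\lambda(1+\eta)>1$), together with the uniform bound on $|B_n|/A_n$. This is also the only situation in which the lemma is later invoked: in the proof of Theorem \ref{thm:vs_lipschitz} the orbit segment up to the entry time $N$ into $V$ lies in $A_\eta$ except for at most $n_V$ iterates where still $a\geqslant\lambda+\beta_\sigma$, and those finitely many factors are absorbed into a constant $(\lambda+\beta_\sigma)^{-n_V}$ independent of $n$. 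So rather than trying to force the bound on all of $\bigcup_{i=0}^n f^{-i}A_\eta$, you should prove (and use) the statement for orbit segments contained in $A_\eta$, possibly allowing a bounded number of excursions; as written, your tail argument attempts to bound a quantity that is actually unbounded.
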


\begin{proof}
By a direct computation, for $(u,v) \coloneqq u e_h + v e_v$ \begin{align*}
|| \diff_x f^n (u,v)||^2 &= (A_n(x) u + B_n(x) v)^2 + (\lambda^{-1} v)^2, \\
&\leqslant 4 A_n(x)^2 u^2 + (4 B_n(x)^2 + \lambda^{-2n})v^2, \\
&\leqslant 4 \max (A_n(x)^2, B_n(x)^2 + \lambda^{-2n}) ||(u,v)||^2.
\end{align*}
For $x \in \bigcup\limits_{i=0}^{n} f^{-i} A_{\eta}$, we know that $\lambda^{-k}/A_k(x) < (\lambda ( 1+ \eta))^{-k}$ and that $-B_n/A_n$ is the partial sum of $\sum s_i$, hence uniformly bounded.
\end{proof}

The following lemma is a direct consequence of the Lipschitz continuity of $k'$ intervening in the construction of $f$, and of Lemma~\ref{lemma:min_a}.

\begin{lemma}
The functions $a$ and $\tfrac{1}{a}$ are Lipschitz continuous on $S_g$.
\end{lemma}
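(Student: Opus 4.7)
The plan is to compute $a$ explicitly in the flat coordinate and verify Lipschitz continuity piece by piece; the Lipschitz continuity of $1/a$ will then follow from a uniform lower bound on $a$.

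On $S_g \smallsetminus V_\Sigma$ the map $f$ coincides with $\varphi$, whose differential in the flat charts is the constant diagonal matrix with entries $\lambda$ and $\lambda^{-1}$, so $a \equiv \lambda$ is trivially Lipschitz. Inside each $V_\sigma$, working on a leaf $W_i$ of the branched cover and writing $\xi = x + iy$ with $|\xi| = \sqrt{x^2 + y^2}$, the explicit formula for $f$ yields
\begin{equation*}
a(\xi) = \lambda + \beta_\sigma\, k_\sigma(|\xi|/\alpha_\sigma) + \frac{\beta_\sigma}{\alpha_\sigma}\,\frac{x^2}{|\xi|}\, k_\sigma'(|\xi|/\alpha_\sigma).
\end{equation*}
The second summand is Lipschitz because $k_\sigma$ is $\mathcal{C}^1$ with Lipschitz derivative and $|\xi|$ is $1$-Lipschitz. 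For the third summand, I would show separately that $x^2/|\xi|$ is Lipschitz on $\mathbb{C}$ by computing its partial derivatives $2x/|\xi| - x^3/|\xi|^3$ and $-x^2 y/|\xi|^3$ and bounding them uniformly on $\mathbb{C} \smallsetminus \{0\}$, and that $k_\sigma'(|\xi|/\alpha_\sigma)$ is Lipschitz in $\xi$ as a composition of two Lipschitz maps. Both factors are also bounded on $V_\sigma$, so their product is Lipschitz; noting that $k_\sigma'(0) = 0$ by evenness of $k_\sigma$ makes the third summand extend by $0$ at $\xi = 0$. Since the right-hand side depends only on $\xi$, it descends unambiguously to a Lipschitz function on $V_\sigma$ with respect to the flat metric, and gluing with the exterior where $f = \varphi$ yields the bound on all of $S_g$.

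For $1/a$, I would derive a uniform positive lower bound on $a$. On $V_\sigma$, unimodality of $k_\sigma$ gives $k_\sigma \geq 0$ and $k_\sigma' \leq 0$ on $[0, 1]$; combined with $\beta_\sigma < 0$, the third summand of the formula above is non-negative while the second is bounded below by $\beta_\sigma k_\sigma(0)$. Under the standing assumptions on $\beta_\sigma$, this gives $a \geq \lambda + \beta_\sigma k_\sigma(0) > 0$ on $V_\sigma$, and $a \equiv \lambda$ on the complement, whence $a \geq c > 0$ uniformly on $S_g$. The identity
\begin{equation*}
\frac{1}{a(x)} - \frac{1}{a(y)} = \frac{a(y) - a(x)}{a(x)\,a(y)}
\end{equation*}
then transfers the Lipschitz bound from $a$ to $1/a$, with constant $\Lip(a)/c^2$. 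The only subtle point in the whole argument is the handling of the coordinate singularity of $x^2/|\xi|$ at $\xi = 0$; the explicit bound on its gradient reduces this to a routine calculation rather than a real obstacle, which is consistent with the lemma being advertised as a direct consequence of the Lipschitz continuity of $k_\sigma'$.
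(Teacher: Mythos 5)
Your proof is correct and follows the route the paper intends: the paper offers no written proof, merely asserting the lemma as a direct consequence of the Lipschitz continuity of $k'$, and your explicit computation of $a$ in the chart of the branched cover, the term-by-term Lipschitz bounds, and the uniform lower bound $a \geqslant \lambda + \beta_\sigma k_\sigma(0) > 0$ (matching the paper's earlier lemma $a \geqslant \lambda + \beta_\sigma$ on $U_\sigma$) are exactly the omitted details. No gaps to report.
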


\begin{theorem}\label{thm:vs_lipschitz}
If $\beta_{\sigma} \in ] - \lambda + \lambda^{-2}, -\lambda + 1 [$ for all $\sigma$ in $\Sigma$, then the vector field $v^s$ is Lipschitz continuous on $S_g \smallsetminus \Sigma$.
\end{theorem}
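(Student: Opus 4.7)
The plan is to estimate $|v^s(x)-v^s(y)| = \bigl|\sum_{i\geqslant 0}(s_i(x)-s_i(y))\bigr|$ term by term, where $s_i(x)=\lambda^{-i} b(f^i(x))/A_{i+1}(x)$, and to exploit the two lemmas just stated (Lipschitz continuity of $b$ and of $1/a$, and the bound $\|\mathrm{d}f^n\|/A_n\leqslant B$ on $\bigcup_{j\leqslant n} f^{-j}A_\eta$) together with the geometric decay already established in Theorem~\ref{thm:vs_bounded_continuous}. Since each $s_i$ is a product, the natural tool is a telescoping identity: write
\begin{align*}
s_i(x)-s_i(y) &= \tfrac{\lambda^{-i}}{A_{i+1}(x)}\bigl(b(f^i(x))-b(f^i(y))\bigr) \\
& \qquad + \lambda^{-i} b(f^i(y))\sum_{k=0}^{i}\tfrac{1}{A_k(x)}\Bigl(\tfrac{1}{a(f^k(x))}-\tfrac{1}{a(f^k(y))}\Bigr)\prod_{j=k+1}^{i}\tfrac{1}{a(f^j(y))},
\end{align*}
so that each difference of values of $b$ or $1/a$ appears alone and can be handled by its Lipschitz constant.

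Next, I would replace $|b(f^i(x))-b(f^i(y))|$ by $\mathrm{Lip}(b)\cdot\|\mathrm{d}f^i\|_{\infty,[x,y]}\,\mathrm{d}(x,y)$, and likewise for $1/a$. On regions where the orbits visit $A_\eta$ sufficiently often, the previous lemma gives $\|\mathrm{d}f^k\|\leqslant B\,A_k$, so the typical term in the telescoped sum is bounded (up to constants) by $\mathrm{d}(x,y)\cdot\lambda^{-i}\, \prod_{j>k}a(f^j(y))^{-1}$, and the overall estimate reduces to the same geometric series already controlled in Theorem~\ref{thm:vs_bounded_continuous} (the convergence of $\sum|s_i|$ under the assumption $\beta_\sigma>-\lambda+\lambda^{-2}$). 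Summing over $k$ and $i$ thus yields $|v^s(x)-v^s(y)|\leqslant L\,\mathrm{d}(x,y)$.

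The delicate point, and the main obstacle, is the uniformity of $L$ near $\Sigma$: two nearby points $x,y$ may eventually enter some basin $U_\sigma$ (where $a$ is merely $>\lambda+\beta_\sigma$, so $\lambda^{-i}/A_i$ alone does not decay), and the two orbits may even enter at slightly different times. To handle this, I would mimic the three-regime decomposition used in the proof of Theorem~\ref{thm:vs_bounded_continuous}: given $x$ (and assuming $\mathrm{d}(x,y)$ small enough that both orbits stay close), split the indices $i$ into those before the entry time $N$ into $V\subset U_\sigma$, those in the transition window of bounded length $n_V$, and the tail $i\geqslant N$. In the tail the contraction factor $\max(\lambda^{-1},\lambda+\beta_\sigma+\tilde\delta)<1$ together with $b(\sigma)=0$ and the local Lipschitz bound on $b$ near $\sigma$ (second preceding lemma) produces the extra geometric decay needed: both $b(f^i(x))$ and $b(f^i(y))$ are comparable to the distance to $\sigma$, which shrinks at a rate that beats the growth of $1/A_{i+1}$ under the hypothesis $\lambda^{-2}<\lambda+\beta_\sigma$.

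Finally, the middle and pre-entry windows contribute only finitely many telescoped terms, each controlled by the same Lipschitz inputs applied to $f^k$ whose differential grows at most like $A_k$, hence are absorbed in the constant. Combining the three regimes and taking the supremum over the finite set $\Sigma$ of basins then gives a Lipschitz constant for $v^s$ that is uniform on $S_g\smallsetminus\Sigma$, as claimed.
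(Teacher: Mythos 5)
Your proposal is correct and follows essentially the same route as the paper: term-by-term Lipschitz estimates of the $s_i$ via a product-rule/telescoping decomposition, the bound $\|\mathrm{d}f^n\|/A_n\leqslant B$, the Lipschitz lemmas for $b$ and $1/a$ (with $b(\sigma)=0$ near $\Sigma$), and the same three-regime split before entry into $V$, in the transition window, and in the tail, where $\beta_\sigma>-\lambda+\lambda^{-2}$ gives the needed geometric decay. The paper likewise concludes by noting the local Lipschitz constants are uniform in the base point, so no essential difference.
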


\begin{proof}
Since all of the partial sums of $\sum s_i$ are Lipschitz continuous, we give summable estimates of local Lipschitz constants. Let $x \in U_{\sigma}$. Let $V$, $N=N(x)$ and $n_V$ be as in the proof of Theorem~\ref{thm:vs_bounded_continuous}. Therefore $U_{\sigma}= \bigcup_{n \geqslant 0} f^{-n}V$. We use the notation $\Lip_x(g)$ to indicate the local Lipschitz constant of a function $g$ in at least one neighbourhood of $x$.

Let $\varepsilon >0$. On a small enough neighbourhood of $x$, we have that $\Lip_x(f^j) \leqslant (1+ \varepsilon )|| \diff_x f^j ||$  and $ \sup \tfrac{1}{A_j} \leqslant (1+\varepsilon) \tfrac{1}{A_j(x)} $ for all $j \leqslant i$. We distinguish the three following cases:
\begin{enumerate}[label=$\bullet$, wide]
\item $i \leqslant N - n_V$. We have directly that,
\begin{align*}
\Lip_x(s_i) &\leqslant \lambda^{-i} \left( \Lip(b) \Lip(f^i) \sup\tfrac{1}{A_i} + \sup(b \circ f^i) \Lip\tfrac{1}{a} \sum\limits_{j=0}^{i} \Lip(f^j) \sup \tfrac{1}{A_{j-1}} \sup \tfrac{A_j}{A_i} \right), \\
&\leqslant \lambda^{-i} B (1+\varepsilon)^2 \left( \Lip(b) + \sup|b| \Lip \tfrac{1}{a} \sup(a) \sum\limits_{j=0}^i \left( \frac{1}{1+\eta} \right)^j \right), \\
&\leqslant C_{\indep i, N, x} \, \lambda^{-i},
\end{align*}
where $C_{\indep i, N, x}$ stands for a constant independent of $i$, $N$ and $x$.
\item $N- n_V \leqslant i < N$. Up to multiplying some part of the above estimate by $( \tfrac{1}{\lambda + \beta_{\sigma}} )^{n_V}$, we have: \begin{align*}
\Lip_x(s_i) &\leqslant C_{\indep i, N, x} \, \lambda^{-i}.
\end{align*}
\item $i=l+N \geqslant N$. In this case, the following estimates hold:
\begin{align*}
\Lip_x(b \circ f^{l+N}) \sup\frac{1}{A_{l+N}} &\leqslant \Lip(b) \Lip(f^N) \sup \frac{1}{A_N} \Lip_{f^N(x)}(f^l) \sup \frac{A_N}{A_{l+N}}, \\
&\leqslant \Lip(b) (1+\varepsilon)^2 \frac{||\diff_x f^N||}{A_N(x)} \max(\lambda^{-1}, \lambda +\beta_{\sigma} + \delta_{\sigma})^l \left( \frac{1}{\lambda + \beta_{\sigma}} \right)^l, \\
&\leqslant C_{\indep x,i,N} \, \max\left( \frac{\lambda^{-1}}{\lambda + \beta_{\sigma}}, 1 + \frac{\delta_{\sigma}}{\lambda + \beta_{\sigma}} \right)^l .
\end{align*}
\begin{align*}
\sup(b\circ f^{l+N}) \Lip_x \frac{1}{A_{l+N}} &\leqslant \varepsilon \max(\lambda^{-1}, \lambda +\beta_{\sigma} + \delta_{\sigma})^l \Lip \tfrac{1}{a} \left( \sum\limits_{j=0}^{N-1} Lip(f^j) \sup \frac{1}{A_{j-1}} \sup\frac{A_j}{A_{l+N}} \right. \\ 
& \qquad + \left. \sum\limits_{j=0}^{l} \Lip(f^N) \Lip_{f^N(x)}(f^l) \sup\frac{1}{A_N} \sup(a) \sup\frac{A_N}{A_{l+N}} \right), \\
&\leqslant \varepsilon \max(\lambda^{-1}, \lambda +\beta_{\sigma} + \delta_{\sigma})^l \Lip \tfrac{1}{a} C_{\indep x,i} \left( \frac{1}{\eta} + n_V\left(\frac{1}{\lambda + \beta_{\sigma}}\right)^{n_V}  \right. \\ 
& \qquad + \left. \sup(a) \sum\limits_{j=0}^{l} \max\left( \frac{\lambda^{-1}}{\lambda + \beta_{\sigma}}, 1 + \frac{\delta_{\sigma}}{\lambda + \beta_{\sigma}} \right)^j \right).
\end{align*}
These two bounds are independent of $N$, hence of $x$.
\end{enumerate}

By setting $\beta_{\sigma} \in \, ] - \lambda + \lambda^{-2} , -\lambda -1[$, all the bounds on $\Lip_x(s_i)$ decay geometrically. Hence all partial sums of $\sum s_i$ share a common Lipschitz constant near each point of $U_\Sigma$, independent of the base-point.

We give now some estimates when $x \in K$. Therefore $f^n(x) \in A_{\eta}$ for all $n$. The following estimate holds:
\begin{align*}
\Lip_x(s_i) &\leqslant \lambda^{-i} \left( \Lip(b) \Lip(f^i) \sup\frac{1}{A_i} + \sup|b| \Lip\left(\frac{1}{a}\right) \sum\limits_{j=0}^{i} \Lip(f^j)\sup\frac{1}{A_{j-1}} \sup\frac{A_j}{A_i} \right), \\
&\leqslant \lambda^{-i} (1+\varepsilon)^2 B \left( \Lip(b) + \sup|b| \sup(a) \Lip\left(\frac{1}{a}\right) \sum\limits_{j=0}^{i} \left( \frac{1}{1+\eta} \right)^j \right), \\
&\leqslant C_{\indep x, i} \, \lambda^{-i}.
\end{align*}

Finally, every partial some of $\sum s_i$ shares a common Lipschitz constant on $S_g \smallsetminus \Sigma$. Therefore $v^s$ is Lipschitz continuous on $S_g \smallsetminus \Sigma$.
\end{proof}

\subsection{Differentiability of $v^s$}

Here we prove that when the function $k$ is $\mathcal{C}^2$, the stable vector field $v^s$ is $\mathcal{C}^1$. In order to prove this result, we use similar computations as in the proof of Theorem~\ref{thm:vs_lipschitz} and show that $v^s$ is differentiable on every compact set of $U_\Sigma$ and on $K$. We then use the relation $\mathrm{d}f \, v^s = \lambda^{-1} v^s \circ f$ (more precisely, the differential of this relation) in order to prove that there is a unique extension of $\mathrm{d}v^s$ from $U_\Sigma$ to $S_g \smallsetminus \Sigma$, and it coincides with $\mathrm{d}v^s$ on $K$.

\begin{theorem}\label{thm:vs_C1}
If the function $k : \mathbbm{R} \to \mathbbm{R} $ in the construction of $f$ is also $\mathcal{C}^2$, then the vector field $v^s$  and the flow $h_t$ are $\mathcal{C}^1$.
\end{theorem}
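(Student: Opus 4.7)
The strategy is to differentiate the series $v^s = e_h - \sum_{i\geq 0} s_i\,e_v$ term by term and establish uniform convergence of $\sum_i \mathrm{d}s_i$ on $S_g\setminus\Sigma$. I would follow the architecture of the proof of Theorem \ref{thm:vs_lipschitz}, simply upgrading the local Lipschitz estimates into bounds on operator norms of differentials. When $k\in\mathcal{C}^2$ the map $f$ is $\mathcal{C}^2$ on $S_g\setminus\Sigma$, so $a,b$ and $1/a$ are $\mathcal{C}^1$ with bounded derivatives, and the chain rule produces an explicit formula for $\mathrm{d}s_i$ in terms of $\mathrm{d}b\circ\mathrm{d}f^i$, $\mathrm{d}a\circ\mathrm{d}f^j$, $b\circ f^i$ and the ratios $1/A_\bullet$. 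The lemma preceding Theorem \ref{thm:vs_lipschitz} furnishes the crucial estimate $\|\mathrm{d}f^j\|/A_j \leq B$ uniformly on the relevant forward-invariant sets, which is exactly what replaces $\Lip(f^j)\sup(1/A_j)$ in that earlier argument.

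On $K$, every forward iterate lies in $A_\eta$, so $a\circ f^j\geq 1+\eta$, and a direct computation bounds each $\|\mathrm{d}s_i(x)\|$ by a constant times $(\lambda(1+\eta))^{-i}$ uniformly in $x\in K$, giving a continuous candidate limit $D$. On each compact subset of $U_\sigma\setminus\{\sigma\}$, I would reproduce the three-regime decomposition from Theorem \ref{thm:vs_lipschitz} with first entry time $N=N(x)$ into a small neighbourhood $V$ of $\sigma$: for $i\leq N-n_V$ the rate is $(\lambda(1+\eta))^{-i}$; for $N-n_V<i\leq N$ only $n_V$ additional terms appear; for $i=N+l>N$ the hyperbolic prefactor $(\lambda(1+\eta))^{-N}$ combines with $b\circ f^{N+l}=O(\max(\lambda^{-1},\lambda+\beta_\sigma+\delta_\sigma)^l)$ (since $b(\sigma)=0$ and $b\in\mathcal{C}^1$) and the telescoping loss $A_N/A_{N+l}\leq (\lambda+\beta_\sigma)^{-l}$. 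The resulting geometric ratio in $l$ is $\max(\lambda^{-1},\lambda+\beta_\sigma+\delta_\sigma)/(\lambda(\lambda+\beta_\sigma))$, strictly less than $1$ precisely under the hypothesis $\beta_\sigma>-\lambda+\lambda^{-2}$. These bounds would be uniform in $x$ and moreover the $i>N$ tail would go to zero as $N(x)\to\infty$, matching the $K$-bound, so $\sum_{i\leq N}\mathrm{d}s_i$ converges uniformly on compacta of $S_g\setminus\Sigma$.

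The main obstacle is the case $i>N$: the operator norm $\|\mathrm{d}f^{N+l}\|$ grows like $A_{N+l}$, so naive bounds diverge, and only the combination of the uniform estimate $\|\mathrm{d}f^j\|/A_j\leq B$ with the telescoping loss coming from the contracting phase makes the tail summable, with the threshold $\beta_\sigma>-\lambda+\lambda^{-2}$ appearing exactly here. Once this uniform convergence is in hand, $v^s\in\mathcal{C}^1(S_g\setminus\Sigma)$, and termwise differentiation of the relation $\mathrm{d}f\cdot v^s=\lambda^{-1}\,v^s\circ f$ from Theorem \ref{thm:vs_bounded_continuous} yields the identity $(\mathrm{d}^2 f)(\cdot,v^s)+\mathrm{d}f\cdot\mathrm{d}v^s=\lambda^{-1}(\mathrm{d}v^s\circ f)\cdot\mathrm{d}f$ on all of $S_g\setminus\Sigma$, confirming in particular that the candidate $D$ on $K$ agrees with the continuous extension of $\mathrm{d}v^s$ from the dense open set $U$.
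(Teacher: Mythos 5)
Your first half is indeed how the paper begins: termwise differentiation of $\sum_i s_i$ together with the three-regime estimates of Theorem \ref{thm:vs_lipschitz} gives differentiability of $v^s$. The genuine gap is at the decisive step. You assert that your bounds are ``uniform in $x$'' and conclude that $\sum_i \D s_i$ converges uniformly on compacta of $S_g \smallsetminus \Sigma$, i.e.\ across $K$. The paper does not claim this: from those estimates it only deduces uniform convergence on $K$ and on compact subsets of $U \smallsetminus \Sigma$, so $v^s$ is differentiable on each of these sets separately, and it explicitly flags that continuity of $x \mapsto \D_x v^s$ at points of $K$ --- equivalently, that limits of $\D_{x_n} v^s$ along $x_n \in U$, $x_n \to x \in K$, coincide with the derivative computed on $K$ --- still has to be proved. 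The difficulty is that the Lipschitz-type estimates control the local Lipschitz constant of $s_i$ only on a neighbourhood of $x$ whose size shrinks with $i$, and their very shape depends on the first entry time $N(x)$ into $V$, which blows up as $x \to K$; converting them into a single geometric bound on $\sup \lVert \D s_i \rVert$ over a \emph{fixed} neighbourhood of a point of $K$, with constants independent of $N(x)$ (in particular controlling $\lVert \D_x f^N \rVert / A_N(x)$ and the loss $(\lambda+\beta_\sigma)^{-l}$ uniformly as $N \to \infty$), is precisely the content of the theorem and is asserted rather than carried out in your sketch. Your closing remark, that differentiating $\D f\, v^s = \lambda^{-1} v^s \circ f$ ``confirms'' that the candidate derivative on $K$ agrees with the extension from $U$, is circular: that identity for $\D v^s$ on all of $S_g \smallsetminus \Sigma$ is only available once $v^s$ is already known to be $\mathcal{C}^1$ across $K$.

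The paper closes this gap by a different mechanism, which occupies most of its proof. It differentiates the renormalization identity on $U$ to get $\D^2_x f(v^s(x),\cdot) + \D_x f\, \D_x v^s = \lambda^{-1} \D_{f(x)} v^s\, \D_x f$, takes $x_n \in U \to x \in K$, uses the uniform Lipschitz bound on $v^s$ and an Arzel\`a--Ascoli argument (after identifying linear maps with their restrictions to the unit sphere to get compact domains) to extract subsequential limits $u_{f^k(x)}$ of $\D_{f^k(x_n)} v^s$, and then exploits the hyperbolicity of $K$ through the induced relation $\D_x f^k (\D_x v^s - u_x) = \lambda^{-k} (\D_{f^k(x)} v^s - u_{f^k(x)}) \D_x f^k$ to show that every subsequential limit equals $\D_x v^s$: the difference has image in $E^u(x)$, annihilates $v^s(x)$, and its action on $e_v$ is forced to vanish by letting $k \to \infty$. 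So either you supply the missing $N$-uniform geometric bound $\lVert \D_x s_i \rVert \leqslant C \theta^i$ valid up to $K$ (which would make your more elementary route work, but needs real bookkeeping), or you should adopt the paper's second step; as written, the crucial continuity of the derivative at $K$ is unproved.
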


\begin{proof}
From the same estimates as in the proof of Theorem~\ref{thm:vs_lipschitz}, we get that the series of differentials $\sum\limits_{i \geqslant 0} \mathrm{d}s_i$ converges uniformly on $K$ and on compact subsets of $U_\Sigma \smallsetminus \Sigma$. By uniform converge, $v^s$ is therefore differentiable on $K$ and on $U_\Sigma \smallsetminus \Sigma$, but we still need to prove that $x \mapsto \D_x v^s$ is continuous on $S_g \smallsetminus \Sigma$. To this end, we use the fact that $v^s$ is uniformly contracted by $f$.

By design, $v^s$ satisfies the equality $\D_x f \, v^s(x) = \lambda^{-1} v^s(f(x))$ for all $x \notin \Sigma$. Now, by differentiation, we get for all $x$ in $U_\Sigma$
\begin{align}\label{eq:diff_condition}
\D^2_x f(v^s(x), \cdot) + \D_x f \, \D_x v^s = \lambda^{-1} \D_{f(x)} v^s \, \D_x f.
\end{align}

Let $x \in K$ and $(x_n)_n$ be a sequence in $U_\Sigma$ converging to $x$ as $n$ goes to infinity. By the Arzel\`a--Ascoli theorem, in order to prove that $(\D_{x_n} v^s)_n$ converges to $\D_x v^s$, it is sufficient to prove that $(\D_{x_n} v^s)_n$ has a unique subsequential limit.

To be exact, in order to apply the Arzel\`a--Ascoli theorem, we need the maps to have a compact domain. We address this problem by associating to any linear map $l: \mathbbm{R}^d \to \mathbbm{R}^d$ its restriction to the unit sphere $\tilde{l} : \mathbbm{S}^{d-1} \to \mathbbm{R}^d$, in addition with the closed condition 
\begin{align}\label{eq:closed_condition} ||x + \theta y|| \, \tilde{l} \left(\frac{x + \theta y}{||x + \theta y||} \right) = \tilde{l}(x) + \theta \tilde{l}(y), \qquad x,y \in \mathbbm{S}^{d-1}, \, \theta \in \mathbbm{R}.
\end{align}
Now, any linear map can be built from a map on the sphere satisfying the condition (\ref{eq:closed_condition}). This one-to-one correspondence is enough to overcome the issue of non-compactness of the domain.

Let $u_x$ be a subsequential limit of $(\D_{x_n} v^s)_n$. Using (\ref{eq:diff_condition}) and the fact that $f$ is $\mathcal{C}^2$, we also get a subsequential limit $u_{f(x)}$ of $(\D_{f(x_n)} v^s)_n$. By the same process, we get for all integer $k$ a subsequential limit $u_{f^k(x)}$ of $(\D_{f^k(x_n)} v^s)_n$ so that \[ \D^2_{f^k(x)} f(v^s(f^k(x)), \cdot) + \D_{f^k(x)} f \, u_{f^k(x)} = \lambda^{-1} u_{f^{k+1}(x)} \, \D_{f^k(x)} f. \]

Taking the difference with (\ref{eq:diff_condition}) we get, after induction, that for all integer $k$
\begin{align}\label{eq:rel_alpha_k}
\D_x f^k (\D_x v^s - u_x) = \lambda^{-k}(\D_{f^k(x)} - u_{f^k(x)}) \D_x f^k
\end{align}

We now prove that the difference $\alpha_0 \coloneqq \D_x v^s - u_x$ is the zero map. First, notice that since $v^s(x) = e_v - (\Sigma_i s_i(x)) e_h$, we must have $\mathrm{Im} (\D_x v^s) \subset \mathbbm{R}e_h = E^u(x)$ and, by taking limits, $\mathrm{Im} (u_x) \subset E^u(x)$. Therefore $\mathrm{Im}(\alpha_0) \subset E^u(x)$. Since $v^s$ is Lipschitz continuous, the operators $\D_{f^k(x)} v^s - u_{f^k(x)}$ are uniformly bounded. Therefore, from the hyperbolicity of $K$ and the relation (\ref{eq:rel_alpha_k}), we get that $\alpha_0 (\mathbbm{R}v^s(x)) \subset \mathbbm{R}v^s(x)$, and so $\alpha_0(v^s(x))=0$. Since $(v^s(x),e_h)$ is a basis of $\mathbbm{R}^2$, there exists some real number $\alpha$ such that $\alpha_0(e_h)=\alpha e_h$. Applying (\ref{eq:rel_alpha_k}) to $e_v$, we get that \[ 0 = (\alpha \mathrm{Id} - \lambda^{-k}(\D_{f^k(x)} f - u_{f^k(x)})) \D_x f \, e_h. \]
If $\alpha$ is not zero, then for large enough value of $k$ the map $(\mathrm{Id} - \frac{\lambda^{-k}}{\alpha}(\D_{f^k(x)} f - u_{f^k(x)})) \D_x f$ is invertible, hence a contradiction. Therefore $\alpha = 0$ and $u_x = \D_x v^s$. Finally, we get that $x \in U_\Sigma \smallsetminus \Sigma \mapsto \D_x v^s$ extends continuously, in a unique fashion, to $S_g \smallsetminus \Sigma$.
\end{proof}

\begin{remark}
In the case when the surface is the torus $\mathbbm{T}^2$, $v^s$ cannot be $\mathcal{C}^2$: if so the induced flow would also be $\mathcal{C}^2$, as well as its Poincar\'e map to a transverse circle. However this map is a Denjoy counterexample since it has a wandering interval, and is therefore at most $\mathcal{C}^1$ with bounded-variation derivative. It is not clear whether this bound on the regularity of $v^s$ still holds for higher genus surfaces.
\end{remark}

\subsection{Continuity of $v^s$ with respect to $\beta$}

In the next section we prove that $h_t$ is uniquely ergodic and that $f$ is mixing with respect to the invariant measure of $h_t$. To do so, we first prove that the family of vector fields $v^s$ is smooth with respect to the amplitude parameter $\beta$ in the definition of $f$. 

We will use the following notations. For all $\beta = (\beta_{\sigma})_{\sigma \in \Sigma}$, write $f_{\beta}$ the function $f$ with the amplitude parameter $\beta$, and $v^s_{\beta}$ its corresponding vector field. We also assume the parameter $(\alpha_{\sigma})_{\sigma \in \Sigma}$ to be fixed.

In this section, in order to simplify the notations, we only consider the case $\# \Sigma =1$, hence the vector $\beta$ has only one component. The general case leads to very similar computations where one must estimates the norm and Lipschitz constant on each $U_\sigma$, $\sigma \in \Sigma_{\beta} = \{ \sigma \in \Sigma \mid \beta_\sigma < 1 - \lambda$, and on $K_\beta = S_g \smallsetminus U_{\Sigma_\beta}$, where $U_{\Sigma_\beta} = \bigcup_{\sigma \in \Sigma_\beta}U_{\sigma}$. The analogous of the partition used in the proof of Theorem~\ref{thm:regularity_beta_vs} then has $2^{\# |\Sigma|}$ elements, but the argument used in order to deal with the boundaries of the elements of the partition remain the same.

More precisely, we prove the following theorem.

\begin{theorem}\label{thm:regularity_beta_vs}
The map $\beta \in \, ]-\lambda + \lambda^2,0] \mapsto v^s_{\beta}$ is continuous for the sup-norm. As a consequence, the function $(x,\beta) \mapsto v^s_{\beta}(x)$ is continuous on $\left( S_g \smallsetminus \Sigma \right) \times \, ]-\lambda + \lambda^{-2},0]$.
\end{theorem}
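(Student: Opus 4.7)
The plan is to upgrade the uniform exponential estimates from the proof of Theorem \ref{thm:vs_bounded_continuous} to be uniform in the parameter $\beta$ on any compact subinterval $I \subset \, ]-\lambda + \lambda^{-2}, 0]$, and then combine this with pointwise uniform convergence of finite partial sums via a Weierstrass-type argument.

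First, I would revisit Proposition \ref{prop:open_cover} and show that the parameters $\eta$ and $\delta$ defining the open cover $A_\eta \cup B_{\sigma, \delta}$ can be chosen uniformly in $\beta \in I$. Since $(x, \beta) \mapsto f_\beta(x)$ is continuous -- indeed $\beta$ enters affinely in the defining formula -- and $I$ is compact, one obtains uniform bounds $a_\beta \geqslant 1 + \eta$ on $A_\eta$ and $\D(f_\beta(x), \sigma) \leqslant (1 - \delta) \D(x, \sigma)$ on $B_{\sigma, \delta} \smallsetminus \{ \sigma \}$ for all $\beta \in I$. The quantities $\sup |b_\beta|$, $\Lip(b_\beta)$, $\lambda + \beta$, and the contraction rate $\max(\lambda^{-1}, \lambda + \beta + \tilde{\delta})$ depend continuously on $\beta$, hence are uniformly controlled on $I$. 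Running the three-case analysis from the proof of Theorem \ref{thm:vs_bounded_continuous} with this uniform cover yields constants $M > 0$ and $\rho_\ast < 1$, both uniform in $\beta \in I$, such that $|s_i^\beta(x)| \leqslant M \rho_\ast^i$ for all $i \geqslant 0$, all $x \in S_g \smallsetminus \Sigma$, and all $\beta \in I$. The lower bound $\beta > -\lambda + \lambda^{-2}$ is precisely what secures $\rho_\ast < 1$, through the inequality $\max(\lambda^{-1}, \lambda + \beta + \tilde{\delta}) < \lambda(\lambda + \beta)$ that already appeared in Theorem \ref{thm:vs_bounded_continuous}.

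Next, for each fixed $N$ I would show that the partial sum $\phi_N^\beta(x) \coloneqq \sum_{i=0}^{N-1} s_i^\beta(x)$ converges to $\phi_N^{\beta_0}$ uniformly in $x \in S_g \smallsetminus \Sigma$ as $\beta \to \beta_0$. Non-compactness of $S_g \smallsetminus \Sigma$ is handled by a splitting argument: given $\varepsilon > 0$, shrink a neighbourhood $W$ of $\Sigma$ so that $|s_i^\beta(x)| < \varepsilon/(4N)$ for all $x \in W \smallsetminus \Sigma$, all $0 \leqslant i < N$, and all $\beta \in I$; this is possible because $b_\beta(\sigma) = 0$ combined with the uniform Lipschitz estimate on $b_\beta$ near $\sigma$ and the invariance of balls centred at $\sigma$ under $f_\beta$. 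On the compact complement $S_g \smallsetminus W$, uniform continuity of $(x, \beta) \mapsto s_i^\beta(x)$ yields $|s_i^\beta(x) - s_i^{\beta_0}(x)| < \varepsilon/(2N)$ for $\beta$ close enough to $\beta_0$, and summing over $i$ gives uniform control of $\phi_N^\beta - \phi_N^{\beta_0}$.

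Finally, given $\varepsilon > 0$ the Weierstrass-type argument closes the proof: first choose $N$ so that $2 M \rho_\ast^N / (1 - \rho_\ast) < \varepsilon/2$, then apply the previous step to get $\sup_x |\phi_N^\beta(x) - \phi_N^{\beta_0}(x)| < \varepsilon/2$ for $\beta$ sufficiently close to $\beta_0$; combining these gives $\| v^s_\beta - v^s_{\beta_0} \|_\infty < \varepsilon$. The joint continuity of $(x, \beta) \mapsto v^s_\beta(x)$ then follows by the triangle inequality together with the continuity of $v^s_\beta$ in $x$ already established in Theorem \ref{thm:vs_bounded_continuous}. The main technical difficulty is the careful bookkeeping of constants to secure uniformity in $\beta$ throughout the three-case analysis of Theorem \ref{thm:vs_bounded_continuous}; no new idea is required beyond those of that proof, but parameter dependence must be tracked through every estimate.
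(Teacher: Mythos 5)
Your overall strategy (a $\beta$-uniform Weierstrass bound on the tail plus continuity of finitely many partial sums) is viable and genuinely different from the paper's argument, but as written it has a gap exactly where the parameter dependence is delicate. Your key claim is that the cover of Proposition \ref{prop:open_cover} and the three-case analysis of Theorem \ref{thm:vs_bounded_continuous} can be run with constants $\eta,\delta,n_V,M,\rho_\ast$ uniform in $\beta$ over \emph{any} compact $I\subset\,]-\lambda+\lambda^{-2},0]$. That machinery presupposes the attracting regime $\beta<1-\lambda$: the cover is built from $B(\sigma,|p^{\sigma}|)$ and the points $q^{\sigma}_i$, the three cases are indexed by the entry time $N(x)$ into a forward-invariant neighbourhood $V$ of $\sigma$, and the excursion bound $n_V$ comes from the contraction towards $\sigma$. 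For $\beta\in[1-\lambda,0]$ there is no basin, no $p^{\sigma}$, no $N(x)$, and as $\beta\uparrow 1-\lambda$ the radius $|p^{\sigma}(\beta)|$ tends to $0$ (since $k(t_0/\alpha)=(1-\lambda)/\beta\to 1$), so the cover and $n_V$ degenerate rather than vary continuously; ``$\beta$ enters affinely, $I$ is compact'' does not rescue this. The repair is to split the parameter range: for $\lambda+\beta>\lambda^{-1}$ use the elementary bound $|s^{\beta}_i(x)|\leqslant \|b_{\beta}\|_{\infty}\,(\lambda+\beta)^{-1}\bigl(\lambda(\lambda+\beta)\bigr)^{-i}$ coming from $a_{\beta}\geqslant\lambda+\beta$ pointwise, and reserve the basin-based three-case analysis for $\lambda+\beta\leqslant\lambda^{-1}+c<1$, where $\beta$ is bounded away from $1-\lambda$ and the uniformity you want can indeed be arranged. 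A smaller flaw of the same nature: your justification of the smallness of $s^{\beta}_i$ near $\Sigma$ for $i<N$ invokes ``invariance of balls centred at $\sigma$ under $f_{\beta}$'', which fails for $\beta>1-\lambda$ (where $\sigma$ is expanding in the horizontal direction); joint continuity of $(x,\beta)\mapsto f_{\beta}^i(x)$ with $f_{\beta}(\sigma)=\sigma$, for the finitely many $i<N$, is what you should use instead.

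For comparison, the paper does not attempt a $\beta$-uniform tail estimate at all: it shows that near a fixed $\beta_0$ the fields $v^s_{\beta}$ are uniformly bounded and uniformly Lipschitz, applies Arzel\`a--Ascoli, and identifies every subsequential limit through the renormalization relation together with Lemma \ref{lemma:dim_one_eigenspace} (one-dimensionality of the $\lambda$-eigenspace of $(f_{\beta_0})_*$), normalizing by the $e_h$-component; at $\beta_0=1-\lambda$, where the Lipschitz estimates blow up from the left, it switches to a pointwise diagonal/compactness argument and a contradiction argument to recover sup-norm convergence. Your route, once repaired as above, avoids both Lemma \ref{lemma:dim_one_eigenspace} and the Arzel\`a--Ascoli step, and it treats $\beta_0=1-\lambda$ on the same footing as the other parameters because it only needs sup-norm (not Lipschitz) uniformity of the tail; that is a real simplification, but the burden shifts entirely onto the uniform tail estimate, which is precisely the step you must carry out regime by regime rather than by a single appeal to the proof of Theorem \ref{thm:vs_bounded_continuous}.
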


To show this continuity, we split the domain into three subsets. We will need the following lemma.

\needspace{5em}
\begin{lemma}\label{lemma:dim_one_eigenspace}
For all $\beta$ in $]-\lambda + \lambda^{-2},0]$, the eigenspace of $(f_{\beta })_{*} \coloneqq (\D f_{\beta})^{-1} \, U_{f_{\beta}}$ associated with the eigenvalue $\lambda$ is of dimension one when acting on the space of bounded and continuous vector fields on the tangent vector bundle of $S_g \smallsetminus \Sigma$, where $U_f$ stands for the Koopman operator of $f$.
\end{lemma}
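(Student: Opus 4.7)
The approach is to decompose any bounded continuous eigenvector of $(f_\beta)_*$ with eigenvalue $\lambda$ in the dynamical basis $\{e_h, v^s_\beta\}$, thereby reducing the eigenvalue equation to two scalar cohomological equations that can be solved using the hyperbolic structure of $K$ and the contracting dynamics in the basins $U_\sigma$.

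I first write globally $v = \alpha\, e_h + \gamma\, v^s_\beta$ on $S_g \smallsetminus \Sigma$: this decomposition is well defined because $\omega$ is Abelian, so $e_h$ is a global continuous vector field, and $v^s_\beta$ is continuous by Theorem~\ref{thm:vs_lipschitz} and not proportional to $e_h$ (its $e_v$-component being a non-zero constant). Using $(\D f)^{-1} e_h = a^{-1} e_h$ together with the identity $(\D f)^{-1}(v^s_\beta \circ f) = \lambda\, v^s_\beta$ (the defining relation of $v^s_\beta$ rewritten), the eigenvalue equation $(f_\beta)_* v = \lambda v$ decouples into $\gamma \circ f = \gamma$ and $\alpha \circ f = \lambda a \cdot \alpha$. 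It therefore suffices to prove (i) $\gamma$ is constant and (ii) $\alpha \equiv 0$.

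For (i), the $f$-invariant bounded continuous function $\gamma$ is constant on $K$ because $f|_K$ is topologically transitive, as established earlier via the commutation $f \circ h_{\lambda t} = h_t \circ f$ and the density of vertical leaves. Writing $c := \gamma|_K$, for any $x \in U_\sigma$ the backward orbit $\{f^{-n}(x)\}$ accumulates on $\partial U_\sigma \subset K$, since $f|_{U_\sigma \smallsetminus \{\sigma\}}$ has no nonwandering point besides $\sigma$ (which is repelling for $f^{-1}$); continuity together with $\gamma \circ f^{-n} = \gamma$ then gives $\gamma(x) = c$, so $\gamma \equiv c$ on $S_g \smallsetminus \Sigma$. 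For (ii) on $K$, iterating yields $\alpha(f^n(x)) = \lambda^n A_n(x) \alpha(x)$; Proposition~\ref{prop:open_cover} gives $a > 1+\eta$ on $K$, hence $\lambda^n A_n(x) \to \infty$, and boundedness of $\alpha$ forces $\alpha \equiv 0$ on $K$.

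The main obstacle is to show $\alpha \equiv 0$ on each $U_\sigma$. Set $\phi := |\alpha|$, a bounded continuous function vanishing on $K$ with $\phi \circ f = \lambda a \cdot \phi$. When $\lambda(\lambda+\beta_\sigma) > 1$, the forward iteration $\phi(f^n(x)) = \lambda^n A_n(x) \phi(x)$ diverges unless $\phi(x) = 0$, and $\alpha \equiv 0$ on $U_\sigma$ follows at once. In the subtler regime $\lambda(\lambda+\beta_\sigma) < 1$, I first show $\lim_{x \to \sigma} \phi(x) = 0$: for $L := \limsup_{x \to \sigma} \phi(x)$, pick $x_n \to \sigma$ with $\phi(x_n) \to L$; then $f^{-1}(x_n) \to \sigma$ and
\[
\phi(f^{-1}(x_n)) \;=\; \frac{\phi(x_n)}{\lambda a(f^{-1}(x_n))} \;\longrightarrow\; \frac{L}{\lambda(\lambda+\beta_\sigma)} \;>\; L,
\]
contradicting the definition of $L$ unless $L = 0$. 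Hence $\phi$ extends continuously by zero to the compact set $\overline{U_\sigma}$, vanishing on $\partial U_\sigma \cup \{\sigma\}$, and its maximum $M$ is attained at some interior point $x^* \in U_\sigma \smallsetminus \{\sigma\}$. The constraints $\phi(f^n(x^*)) \leq M$ for every $n \in \mathbb{Z}$ force $\prod_{j=0}^{n-1} \lambda a(f^j(x^*)) \leq 1$ and $\prod_{j=1}^n \lambda a(f^{-j}(x^*)) \geq 1$; combined with the accumulation of the backward orbit on $K$ (where $\lambda a > \lambda(1+\eta) > 1$) and the contracting behavior of $\lambda a$ near $\sigma$, these force $M = 0$. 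Combining (i) and (ii) gives $v = c\, v^s_\beta$, proving the eigenspace is one-dimensional.
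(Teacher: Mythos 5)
Your reduction is the same as the paper's: decomposing the eigenvector in the frame formed by the constant $\D f$-eigendirection and $v^s_{\beta}$, the eigenvalue equation splits into the invariance equation $\gamma\circ f=\gamma$ and the cocycle equation $\alpha\circ f=\lambda a\,\alpha$, and both you and the paper must then show that $\gamma$ is constant and $\alpha\equiv0$. Your treatment of $\gamma$ (transitivity of $f|_K$ plus accumulation of backward orbits of basin points on $\partial U_{\sigma}\subset K$) is an acceptable variant of the paper's argument, which instead uses density of the unstable leaf of $p^{\sigma}_i$, resp.\ of $\sigma$. Two smaller issues: for $\beta\in\,]1-\lambda,0]$ the conical points are not attracting, there are no basins, and the $K$/$U_{\sigma}$ dichotomy your argument is built on is vacuous, so that sub-range needs the paper's density-of-$W^{su}(\sigma)$ argument or a substitute; and the boundary value $\lambda(\lambda+\beta_{\sigma})=1$ falls into neither of your two cases for $\alpha$.

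The genuine gap is the final step of (ii) in the regime $\lambda(\lambda+\beta_{\sigma})<1$. The constraints you extract at a maximum point $x^*$, namely $\prod_{j=0}^{n-1}\lambda a(f^j(x^*))\leqslant1$ and $\prod_{j=1}^{n}\lambda a(f^{-j}(x^*))\geqslant1$ for all $n$, are not in tension with the facts you then invoke: accumulation of the backward orbit on $K$ only yields that the backward products tend to infinity, and the contraction of $\lambda a$ near $\sigma$ only yields that the forward products tend to $0$; both are fully compatible with the displayed inequalities and with $M>0$, so ``these force $M=0$'' is an unproved leap, and it is precisely the hard point of the lemma. To see why no soft argument of this type can close it, note that one can transport an arbitrary continuous bump on a fundamental annulus $F=B(\sigma,r)\smallsetminus f(B(\sigma,r))$ by the cocycle $\lambda a$: the resulting $\phi\geqslant0$ satisfies $\phi\circ f=\lambda a\,\phi$, is bounded and tends to $0$ at $\sigma$ and along the approach to $\partial U_{\sigma}$ (backward orbits of $F$ enter a trapping neighbourhood of the $f^{-1}$-attractor $K$ after a uniformly bounded time, and $\lambda a\geqslant1$ outside a fixed ball around $\sigma$ which backward orbits cannot re-enter), yet it has a strictly positive interior maximum at which all of your constraints hold. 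So positivity, boundedness, continuity and the maximum principle alone cannot give $M=0$; any proof must rule out such transported functions by a genuinely different mechanism. This is also where you depart from the paper, which handles the second component not by a maximum principle but by exploiting the density of the unstable leaf of $p^{\sigma}_i$ (where $a>1$); however terse that argument is, yours as written does not settle this case.
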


\begin{proof}
Let $\beta \in \, ]-\lambda + \lambda^{-2},0]$. Let $w$ be a vector field in the eigenspace of $(f_{\beta })_{*}$ associated with the eigenvalue $\lambda$. In other words, $w$ is such that $\D_x f_{\beta}(w(x))=\lambda^{-1} w(f_{\beta}(x))$, for all $x$. Now, since $v^s$ is continuous, non vanishing and transverse to $e_h$, there exist two functions $w_1$ and $w_2$ uniquely determined such that $w(x) = w_1(x)v^s(x) + w_2(x) e_h$ for all $x$. These two functions are bounded and continuous. Hence, we have,
\begin{align*}
\D_x f_{\beta} (w_2(x) e_h) &= a(x)w_2(x)e_h = \D_x f_{\beta}(w(x) - w_1(x)v^s(x)), \\
&= \lambda^{-1}(w(f_{\beta}(x)) - w_1(x)v^s(f_{\beta}(x))),\\
w(f_{\beta}(x)) &= w_1(x) v^s(f_{\beta}(x)) + \lambda a(x) w_2(x) e_h.
\end{align*}
Therefore, $w_1$ is invariant by $f_{\beta}$ and for all $i>0$,
\begin{align*}
w_2(x) = \prod\limits_{j=0}^{i-1}\frac{1}{\lambda a(f_{\beta}^j(x))}w_2(f_{\beta}^i(x)).
\end{align*}

By continuity of $w_2$ and compactness of $S_g$, $w_2$ is bounded. Now, we distinguish two cases in order to prove that $w_2=0$.

For $\beta_{\sigma} < 1-\lambda$, there exists a fixed point $p^{\sigma}_{i}$, in $K$, whose unstable leaf is dense. Since at this point $a(p^{\sigma}_{i})>1$, by continuity of $a$, we get that $a>1$ in a neighbourhood of $p^{\sigma}_{i}$, hence $1/(\lambda a) < \lambda^{-1} < 1$ and $w_2=0$.

For $1-\lambda \leqslant \beta_{\sigma} \leqslant 0$, we know that the unstable leaf of $\sigma$ is dense in $S_g$. By continuity on every leaf of the branched cover at $\sigma$, we can set $a(\sigma) = \lambda + \beta_{\sigma} \geqslant 1$. Hence, in a neighbourhood of $\sigma$, we get $1/(\lambda a) \leqslant \lambda^{-1} <1$, hence $w_2=0$.
\vspace{1em}

In order to prove that $w_1$ is constant, we also distinguish two cases. 

For $\beta_{\sigma} < 1-\lambda$, the unstable leaf of each $p^{\sigma}_{i}$ is dense. Hence $w_1(x) = w_1(p^{\sigma}_{i})$ for all $x$. Hence the claim in this case.

For  $1-\lambda \leqslant \beta_{\sigma} \leqslant 0$, the unstable leaf of $\sigma$ is dense. Therefore, $w_1(x) = w_1(\sigma)$ for all $x$. Hence the claim.
\end{proof}

\begin{proof}[Proof of Theorem~\ref{thm:regularity_beta_vs}.]
We first prove that $|| v^s_{\beta} - v^s_{\beta_0} ||_{\infty} \xrightarrow[\beta \to \beta_0]{} 0$ for all $\beta_0 $ in $ ]-\lambda + \lambda^{-2}, 1- \lambda[$. From proofs of Theorems~\ref{thm:vs_bounded_continuous} and \ref{thm:vs_lipschitz}, we can see that on a small enough neighbouhood $B_0$ of $\beta_0$, the vector fields $v^s_{\beta}$ are uniformly bounded, as well as their Lipschitz constants. By the Arzel\`a-Ascoli theorem, the set $\{v^s_{\beta} \mid \beta \in B_0 \}$ is relatively compact. Take a sequence of $(\beta_n)_n$ converging to $\beta_0$, then every sub-sequential limit $w$ of $(v^s_{\beta_n})_n$ must satisfies $(f_{\beta_0})_* w = \lambda w$. By Lemma~\ref{lemma:dim_one_eigenspace}, the space of such vector fields is one dimensional, hence there exists a constant $c$ such that $w = c v^s_{\beta_0}$. Since in the basis $(e_h,e_v)$ all the component of $v^s_{\beta}$ along $e_v$ is $1$, we get that $c=1$. Hence $v^s_{\beta_n}$ converges uniformly to $v^s_{\beta_0}$, and so for all sequences $(\beta_n)_n$. The rest of the claim follows directly by the triangle inequality and Lipschitz continuity.

\vspace{1em}
We now prove that $|| v^s_{\beta} - v^s_{\beta_0} ||_{\infty} \xrightarrow[\beta \to \beta_0]{} 0$ for all $\beta_0 \in [ 1- \lambda, 0]$. 
The same argument as in the case above holds. Indeed, for all $\beta \in [ 1- \lambda, 0]$ we get
\begin{align*}
\sum\limits_{i \geqslant 0} \left| \lambda^{-i} b_{\beta} \circ f_{\beta}^i \prod\limits_{j=0}^{i}\frac{1}{a_{\beta} \circ f_{\beta}^j} \right| &\leqslant \frac{1}{1-\lambda^{-1}} ||b_{\beta}||_{\infty}.
\end{align*}
Hence $v^s_{\beta}$ is uniformly bounded for $\beta$ in a neighbourhood of $\beta_0$.
Similarly, the following estimate on the Lipschitz constant holds for all $\varepsilon >0$
\begin{align*}
\sum\limits_{i \geqslant 0} \Lip_x \left( \lambda^{-i} b_{\beta} \circ f_{\beta}^i \prod\limits_{j=0}^{i}\frac{1}{a_{\beta} \circ f_{\beta}^j} \right) &\leqslant (1+\varepsilon)^2 ||v_{\beta}^s||_{\infty} \sum\limits_{i \geqslant 0} \lambda^{-i} \biggl( \Lip(b_{\beta})  \\
& \quad  + i \Lip \left(\frac{1}{a_{\beta}} \right) ||a_{\beta}||_{\infty} ||b_{\beta}||_{\infty} \biggr)
\end{align*}

\vspace{1em}
Finally, we prove that $|| v^s_{\beta} - v^s_{1 - \lambda} ||_{\infty} \xrightarrow[\beta \to (1 - \lambda)^-]{} 0$. 
Recall notations from Proposition~\ref{prop:open_cover} and let $V$ be a neighbourhood of some $\sigma \in \Sigma$ as in the proof of Theorem~\ref{thm:vs_bounded_continuous}. Let $x \in f^{-N}(V) \cap U_{\sigma}$ and let $n(x)$ be the number of points in the orbit of $x$ that belong to $B_{\sigma,\delta} \smallsetminus V$. Then $N-n(x) \geqslant 0$ and we have the following estimates depending on $i$:
\begin{enumerate}[label=$\bullet$, wide]
\item if $i \leqslant N - n(x)$, then $|s_i(x)| \leqslant \lambda^{-i} \left( \frac{1}{1+\eta} \right)^{i+1} \sup|b|$.
\item if $N-n(x) < i \leqslant N$, then $|s_i(x)| \leqslant \lambda^{-i} \left( \frac{1}{1+\eta} \right)^{N-n(x)} \left( \frac{1}{\lambda + \beta} \right)^{i-(N-n(x))} \sup|b| $ so that $|s_i(x)| \leqslant \sup|b| \lambda^{-i} \left( \frac{1}{\lambda + \beta} \right)^{i}$.
\item if $i = j + N > N$, then $|s_i(x)| \leqslant \lambda^{-(j+N)} \left( \frac{1}{\lambda + \beta } \right)^{j+N} \Lip(b) \varepsilon \max(\lambda^{-1}, \lambda + \beta_{\sigma} + \delta_{\sigma}) ^j$.
\end{enumerate}
Therefore, $\sum\limits_{i \geqslant 0} |s_i(x)| \leqslant ||b||_{\infty} \left( \frac{\lambda}{\lambda - 1} + \frac{\lambda(\lambda + \beta)}{\lambda(\lambda + \beta) - 1} + \varepsilon \frac{\Lip(b)}{1- \frac{\max(\lambda^{-1},\lambda + \beta + \delta)}{\lambda(\lambda + \beta)}} \right)$, and so for all $\varepsilon >0$.

Hence, the family of vector fields $(v^s_{\beta})_{\beta}$ is uniformly bounded on $S_g$ and the bound can be chosen uniformly in $\beta$ for $\beta \in [1- \lambda - \varepsilon, 1 - \lambda]$. However, the estimates we had on the Lipschitz constants are no longer good enough to apply the same argument as in above cases.

Let $x \in S_g$ and $(x_n,\beta_n)_n$ be a sequence converging to $(x,1-\lambda)$ and such that $\beta_n < 1-\lambda$ for all $n$. For $n$ large enough, the sequence $(v_{\beta_n}^s(x_n))_n$ is bounded and let $w(x)$ be a sub-sequential limit. Since for all $k \geqslant 0$, the sequence $(v_{\beta_n}^s(f^k_{\beta_n}(x_n)))_n$ is bounded, by a diagonal argument we can assume up to extracting that the sequences converge to some vectors $w(f^{k}_{1-\lambda}(x))$. By continuity of $\D f_{\beta}$ in $\beta$, we get that $\D_x f^{k}_{1-\lambda} w(x) = \lambda^{-k} w(f^k_{1-\lambda}(x))$ for all $k$. By expressing vectors $w(f^{k}_{1-\lambda}(x))$ in the basis $(v^s_{1-\lambda}(x), e_h)$, we see that $w(x) \in \mathbbm{R}v^s_{1-\lambda}(x)$. Since each vector of the form $v_{\beta_n}^s(f^k_{\beta_n}(x_n))$ has a component equal to $1$ along $e_v$ in the basis $(e_h,e_v)$, we get $w(x) = v^s_{1-\lambda}(x)$. Hence $(x,\beta)\mapsto v^s_{\beta}(x)$ is continuous at $(x,(1-\lambda)^-)$. 

Now, suppose that $|| v^s_{\beta} - v^s_{1 - \lambda} ||_{\infty}$ does not converge to zero as $\beta$ converges to $1-\lambda$ from below. Then, there exists some positive $\varepsilon$ and sequences $(\beta_n)_n$ and $(x_n)_n$ such that $\lim\limits_{n \to \infty} \beta_n = (1-\lambda)^-$ and $||v^s_{\beta_n}(x_n) - v^s_{1-\lambda}(x_n)|| \geqslant \varepsilon$. Up to extracting, we can assume that $(x_n)_n$ converges to some $x$. Therefore $||v^s_{\beta_n}(x_n) - v^s_{1-\lambda}(x)|| \geqslant \varepsilon/2$ for large enough $n$. This contradicts the continuity of $(x,\beta)\mapsto v^s_{\beta}(x)$ at $(x,(1-\lambda)^-)$. 

\vspace{1em}
The continuity of $(x,\beta) \mapsto v^s_{\beta}(x)$ on $(S_g \smallsetminus \Sigma) \times ] -\lambda + \lambda^2 , 0]$ follows from \[ ||v^s_{\beta}(x) - v^s_{\beta_0}(x_0)|| \leqslant ||v^s_{\beta} - v^s_{\beta_0}||_{\infty} + ||v^s_{\beta_0}(x) - v^s_{\beta_0}(x_0)|| \xrightarrow[(x,\beta) \to (x_0,\beta_0)]{} 0. \]
\end{proof}

\subsection{Renormalized flow and topological properties of $K$}\label{subsect:h_t}

Since $v^s$ is Lipschitz continuous, we can integrate it into a flow $h_t$. Since some trajectories reaches in finite time conical points, for which $v^s$ is not defined, this flow must be treated carefully. On the other hand, since $v^s$ is uniformly contracted by the action of $f$, $h_t$ is renormalized by $f$. From this relationship between $f$ and $h_t$, we can deduce further topological properties about stable leaves and the set $K$. We first prove that for each fixed hyperbolic point $p^{\sigma}_i$, its stable leaf coincides with the orbit by $h_t$ of this point. From this fact and Proposition~\ref{prop:K_as_Finite_Union}, we deduce that $K$ is transverse to any horizontal leaf. We then show that $K$ is in fact equal to the closure of the stable leaf of any hyperbolic fixed point $p^{\sigma}_i$, hence $K$ is connected. Finally, we prove that $f$ is topologically transitive with respect to the trace topology of $S_g$ on $K$.

\begin{proposition}\label{prop:commutation_relation_and_stability_of_K}
For all $x \in S_g \smallsetminus \Sigma$ and $t$ for which $h_t(f(x))$ is well defined, $f$ and $h_t$ satisfy the relation, \[ f \circ  h_{\lambda t}(x) = h_t \circ f(x). \]
The orbit $\{h_t(x)\}$ of any point $x$ in $K$ is well defined for all time $t$. Furthermore, for all $t \in \mathbbm{R}$, $h_t(K) = K$.
\end{proposition}

\begin{proof}
Since $\diff_x f(v^s(x)) = \lambda^{-1} v^s(f(x))$, for $x \in S_g \smallsetminus \Sigma$, notice that, 
\begin{align*}
\frac{\D}{\D t} (f \circ h_{\lambda t}(x)) = \diff_{h_{\lambda t}(x)} f \left( \frac{\D}{\D t} h_{\lambda t}(x) \right) = \diff_{h_{\lambda t}(x)} f ( \lambda v^s(h_{\lambda t}(x))) = v^s(f \circ h_{\lambda t}(x)).
\end{align*}
In particular, $t \mapsto f(h_{\lambda t}(x))$ solves $\frac{\D}{\D t} g = v^s(g)$ with initial condition $g(0) = f(x)$ at $t=0$. By construction of the flow $h_t$, $t \mapsto h_t(f(x))$ solves the same differential equation, with the same initial condition.
Therefore, by uniqueness of the solution (since $v^s$ is Lipschitz) the two functions $t \mapsto f(h_{\lambda t}(x))$ and $t \mapsto h_t(f(x))$ must coincide where they are well defined. In other words, $f \circ  h_{\lambda t} = h_t \circ f$ for all $t$ where the solution is defined.

Let $\mathcal{F} \coloneqq S_g \smallsetminus( \Sigma \cup \{ x \in S_g \smallsetminus \Sigma \mid  \forall t \in \mathbbm{R}, \, h_t(x) \text{ exists} \})$ be the set of points whose trajectory are not well defined for all time. We now prove that if $x \in \mathcal{F}$, then there exist $\sigma \in \Sigma$ and $t_0 \in \mathbbm{R}$ such that $h_t(x) \to \sigma$ as $t$ tends to $t_0$. Indeed, by compactness of $S_g$, up to taking a sub-sequence $(t_n)_n$ that converges to $t_0$, the limit of $(h_{t_n}(x))_n$ exists. If this limit doesn't belong to $\Sigma$, we can extend the solution past $t_0$.

To prove that $h_t$ is complete when restricted to $K$, it suffices to prove that $K \subset \mathcal{F}^c$, or equivalently, that $\mathcal{F} \subset U_{\Sigma}$. By contradiction, let $x \in \mathcal{F} \cap K$. Let $t_0$ and $\sigma$ be as above. Hence, the smooth curves $f^n \circ h_{t}(x) : t \in [0,t_0] \to S_g$ join $K$ to $\Sigma$ and their lengths are less than $\lambda^{-n}t_0 ||v^s||_{\infty}$. This contradicts the fact that $\D(K, \Sigma) > \min \{ |p^{\sigma}| \mid \sigma \in \Sigma \} > 0$ by Proposition~\ref{prop:existence_ball_in_basin}.

Since $\mathcal{F} \cap K = \emptyset$, $h_t(x)$ is well defined for all $x \in K$ and all time $t$. Let $x\in K$. By contradiction, assume there exists $t_1$ such that $h_{t_1}(x) \in U$. Therefore $f^n(h_{t_1}(x))$ converges to some $\sigma$ as $n$ goes to infinity and the curves $f^n \circ h_{t}(x) : t \in [0,t_1] \to S_g$ joins $K$ to some arbitrarily close point to $\sigma$ for $n$ large enough. Since such a curve is of length at most $\lambda^{-n}t ||v^s||_{\infty}$, it contradicts $\D(K, \Sigma) >0$.
\end{proof}

This commutation relation between $f$ and $h_t$ is a central argument throughout the rest of this article.

We can now deduce the announced topological properties of the invariant leaves and of $K$.

\begin{proposition}\label{prop:stable_leaf_is_flow_trajectory}
For all $p_{i}^{\sigma}$, we have the equality of sets $W^{ss}(p_{i}^{\sigma}) = h_{\mathbbm{R}}(p_{i}^{\sigma})$. Also, the set $K$ is transverse to any horizontal leaf.
\end{proposition}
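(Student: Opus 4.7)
The plan is to prove the two inclusions separately, using the commutation relation $f \circ h_{\lambda t} = h_t \circ f$ as the main tool, together with the local stable manifold theorem at the hyperbolic fixed point $p_i^\sigma$.

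For the inclusion $h_{\mathbb{R}}(p_i^\sigma) \subset W^{ss}(p_i^\sigma)$, I would take $y = h_s(p_i^\sigma)$ for some $s \in \mathbb{R}$ and iterate. Since $p_i^\sigma$ is fixed by $f$, the commutation relation gives
\begin{align*}
f^n(h_s(p_i^\sigma)) = h_{\lambda^{-n} s}(f^n(p_i^\sigma)) = h_{\lambda^{-n} s}(p_i^\sigma).
\end{align*}
As $n \to \infty$, $\lambda^{-n} s \to 0$, so by continuity of the flow at time $0$, $f^n(y) \to p_i^\sigma$, which shows $y \in W^{ss}(p_i^\sigma)$. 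Here completeness of $h_t$ at $p_i^\sigma \in K$ (established just above) is used to make sense of $h_s(p_i^\sigma)$ for all $s \in \mathbb{R}$.

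For the converse $W^{ss}(p_i^\sigma) \subset h_{\mathbb{R}}(p_i^\sigma)$, the key step is a local coincidence between the two curves near $p_i^\sigma$. By the stable manifold theorem applied to the hyperbolic fixed point $p_i^\sigma$, there is a unique $\mathcal{C}^1$ local stable manifold $W^{ss}_{loc}(p_i^\sigma)$, one-dimensional and tangent to $E^s(p_i^\sigma) = \mathbb{R} v^s(p_i^\sigma)$. On the other hand, the flow orbit $h_{(-\epsilon,\epsilon)}(p_i^\sigma)$ is also a one-dimensional $\mathcal{C}^1$ curve tangent to $v^s(p_i^\sigma)$, and it is locally invariant under $f$: indeed $f(h_t(p_i^\sigma)) = h_{\lambda^{-1}t}(p_i^\sigma)$ contracts times by $\lambda^{-1}$, so the flow segment is sent into itself. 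By uniqueness of the local stable manifold, the two curves agree in a neighborhood of $p_i^\sigma$.

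To conclude globally, take $y \in W^{ss}(p_i^\sigma)$. Then $f^N(y) \in W^{ss}_{loc}(p_i^\sigma)$ for some large $N$, hence $f^N(y) = h_{t_N}(p_i^\sigma)$ for some $t_N$. Inverting the commutation relation yields $f^{-n} \circ h_t = h_{\lambda^n t} \circ f^{-n}$, so
\begin{align*}
y = f^{-N}(h_{t_N}(p_i^\sigma)) = h_{\lambda^N t_N}(f^{-N}(p_i^\sigma)) = h_{\lambda^N t_N}(p_i^\sigma) \in h_{\mathbb{R}}(p_i^\sigma),
\end{align*}
again using that $p_i^\sigma$ is fixed. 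The main delicate point is checking the local identification of $W^{ss}_{loc}(p_i^\sigma)$ with the flow orbit; once that is in hand, the commutation relation does all the work to propagate the identification globally along $f$-iterates, while simultaneously ensuring that no trajectory involved escapes the domain of the flow (since $p_i^\sigma \in K$ and $K$ is $h_t$-invariant by the previous proposition).
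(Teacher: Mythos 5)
Your proof is correct and follows essentially the same route as the paper: the inclusion $h_{\mathbbm{R}}(p_i^{\sigma}) \subset W^{ss}(p_i^{\sigma})$ via the commutation relation, then a local identification of the flow orbit with the local stable leaf at the hyperbolic fixed point, propagated globally by backward $f$-iterates. The only cosmetic difference is that you invoke uniqueness of the local stable manifold where the paper uses the Grobman--Hartman linearisation, and both arguments leave the same small point implicit, namely that the flow segment through $p_i^{\sigma}$, having nonvanishing velocity $v^s(p_i^{\sigma})$, actually fills a relative neighbourhood of $p_i^{\sigma}$ inside the one-dimensional local stable leaf.
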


\begin{proof}
Let $t \in \mathbbm{R}$. Hence $f^n(h_t(p_{i}^{\sigma}))=h_{\lambda^{-n}t}(p_{i}^{\sigma})$ converges to $p_{i}^{\sigma}$ as $n$ goes to infinity. Hence $h_{\mathbbm{R}}(p_{i}^{\sigma}) \subset W^{ss}(p_{i}^{\sigma})$. By the commutation relation between $f$ and $h_t$, we get that $h_{\mathbbm{R}}(p_{i}^{\sigma})$ is invariant by $f$. In the linearisation near $p_{i}^{\sigma}$ given by the Grobman--Hartman theorem, the only invariant part by $f$ corresponds to a small piece $\gamma$ of the stable leaf of $p_{i}^{\sigma}$. By invariance of $h_{\mathbbm{R}}(p^{\sigma}_i)$ by $f$, we get $\gamma \subset h_{\mathbbm{R}}(p_{i}^{\sigma})$. Finally, since $W^{ss}(p_{i}^{\sigma}) = \bigcup_{n\geqslant 0} f^{-n}(\gamma)$, we get $h_{\mathbbm{R}}(p_{i}^{\sigma}) = W^{ss}(p_{i}^{\sigma})$.

Since the convergence of the infinite sum defining $v^s$ is uniform on $K$, the horizontal component of the vector field $v^s$ is continuous, hence bounded. Therefore, all the stable leaves $W^{ss}(p_{i}^{\sigma})$ are transverse to any horizontal leaf. The result holds by taking the closure since slopes are bounded and by Proposition~\ref{prop:K_as_Finite_Union}.
\end{proof}

\begin{theorem}\label{thm:K_connected}
The set $K$ is connected and it can be written as $K= \overline{W^{ss}(p^{\sigma}_i)}$, for any $\sigma \in \Sigma$ and any $1 \leqslant i \leqslant 2 n_{\sigma}$.
\end{theorem}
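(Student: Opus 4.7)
The plan is to prove both claims simultaneously by establishing the equality $K = \overline{W^{ss}(p^{\sigma}_i)}$: connectedness then follows immediately, since by Proposition \ref{prop:stable_leaf_is_flow_trajectory} the set $W^{ss}(p^{\sigma}_i) = h_{\mathbb{R}}(p^{\sigma}_i)$ is the continuous image of $\mathbb{R}$ under the flow $h_t$, and taking closure preserves connectedness. The inclusion $\overline{W^{ss}(p^{\sigma}_i)} \subset K$ is straightforward: if $y \in W^{ss}(p^{\sigma}_i)$ then $f^n(y) \to p^{\sigma}_i \in K$, and the $f$-invariance of $U_{\Sigma}$ forces $y \in K$; closedness of $K$ gives the inclusion.

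For the reverse inclusion $K \subset \overline{W^{ss}(p^{\sigma}_i)}$, I first reduce via Proposition \ref{prop:K_as_Finite_Union} to showing $\overline{W^{ss}(p^{\sigma'}_{j'})} \subset \overline{W^{ss}(p^{\sigma}_i)}$ for every hyperbolic fixed point $p^{\sigma'}_{j'}$. Since $\overline{W^{ss}(p^{\sigma}_i)} \subset K$ and the flow is complete on $K$, the set $\overline{W^{ss}(p^{\sigma}_i)} = \overline{h_{\mathbb{R}}(p^{\sigma}_i)}$ is $h_t$-invariant. Thus it is enough to show that the single point $p^{\sigma'}_{j'}$ lies in $\overline{W^{ss}(p^{\sigma}_i)}$: flow invariance then propagates to the whole orbit $W^{ss}(p^{\sigma'}_{j'}) = h_{\mathbb{R}}(p^{\sigma'}_{j'})$ and to its closure.

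To produce $p^{\sigma'}_{j'} \in \overline{W^{ss}(p^{\sigma}_i)}$, I exhibit a heteroclinic point $z \in W^{ss}(p^{\sigma}_i) \cap W^{su}(p^{\sigma'}_{j'})$. Once such $z$ is available, $f^{-n}(z) \in W^{ss}(p^{\sigma}_i)$ by $f$-invariance of stable leaves, while $f^{-n}(z) \to p^{\sigma'}_{j'}$ by definition of the unstable leaf, giving the desired accumulation. To construct $z$, take a short embedded arc $\gamma \subset W^{ss}(p^{\sigma}_i) = h_{\mathbb{R}}(p^{\sigma}_i)$ (flow orbits being locally embedded by uniqueness of ODE solutions for the Lipschitz field $v^s$). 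The arc $\gamma$ is transverse to the vertical foliation by the corollary preceding this theorem. By Proposition \ref{prop:stable_leaves}(ii), $W^{su}(p^{\sigma'}_{j'})$ contains a full semi-infinite vertical leaf, which is dense in $S_g$; choosing a tubular flow-box $N \cong [0,1] \times (-\delta,\delta)$ for the vertical foliation with $\gamma$ as zero-section, this dense vertical leaf eventually enters $N$ and, by the product structure of the vertical foliation inside $N$, traverses the box as a full vertical segment, hence crosses $\gamma$ at the desired point $z$.

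The delicate point is this last step: turning density of a single vertical leaf into an actual transverse crossing of $\gamma$. A dense curve in a surface could a priori accumulate on $\gamma$ without meeting it, and the flow-box trivialization of the vertical foliation near $\gamma$ is precisely what rules this out, since once the leaf enters the product box its restriction to the box decomposes into full vertical fibers, each of which meets $\gamma$ in exactly one point.
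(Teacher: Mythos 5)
Your proof is correct, and its skeleton matches the paper's: both arguments produce a heteroclinic point in $W^{ss}(p^{\sigma}_i)\cap W^{su}(p^{\sigma'}_{j'})$ from the density of the vertical leaf inside $W^{su}(p^{\sigma'}_{j'})$ (Proposition \ref{prop:stable_leaves}) together with the transversality of stable leaves to the vertical foliation, and both then conclude via the decomposition of $K$ in Proposition \ref{prop:K_as_Finite_Union}. Where you genuinely diverge is in how the single heteroclinic point is promoted to the inclusion $\overline{W^{ss}(p^{\sigma'}_{j'})}\subset\overline{W^{ss}(p^{\sigma}_i)}$: the paper pulls back a small arc of $W^{ss}(p^{\sigma}_i)$ through $z$ and uses an inclination-lemma-type accumulation of $f^{-n}(\gamma)$ on the local stable manifold inside the Grobman--Hartman rectangle $W_2$, then writes $W^{ss}(p_2)=\bigcup_{n\geqslant 0}f^{-n}(W^{ss}(p_2)\cap W_2)$ and invokes $f$-invariance of the closure; you instead observe that the backward orbit $f^{-n}(z)$ stays in $W^{ss}(p^{\sigma}_i)$ and converges to $p^{\sigma'}_{j'}$, so the fixed point itself lies in $\overline{W^{ss}(p^{\sigma}_i)}$, and you then sweep out the whole leaf using $W^{ss}=h_{\mathbbm{R}}$ (Proposition \ref{prop:stable_leaf_is_flow_trajectory}) and the $h_t$-invariance of the closed set $\overline{W^{ss}(p^{\sigma}_i)}\subset K$. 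Your version is slightly more economical (no linearization box or accumulation estimate needed, only $f$-invariance of leaves and completeness of the flow on $K$), at the cost of leaning on the flow machinery; it also makes explicit, via the flow-box for the vertical foliation, the crossing argument that the paper states in one line, which is a genuine point of care since density alone would not force an intersection with $\gamma$.
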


\begin{proof}
Let $\sigma_1, \, \sigma_2 \in \Sigma$ and $i_1,i_2$ be two integers. For simplicity, call $p_1=p^{\sigma_1}_{i_1}$ and $p_2=p^{\sigma_2}_{i_2}$. Let $W_2$ be the open set containing $p_2$ given by the Grobman--Hartman theorem -- without loss of generality we assume it is a rectangle with horizontal and vertical sides. Since $W^{su}(p_2)$ contains a dense horizontal leaf, and $W^{ss}(p_1)$ is transverse with all vertical leaves, the intersection $W^{su}(p_2) \cap  W^{ss}(p_1)$ is non-empty. Let $x \in W^{su}(p_2) \cap  W^{ss}(p_1)$ and let $\gamma$ be a small connected piece of $W^{ss}(p_1)$ containing $x$ in its interior. Then, for large enough $n \geqslant 0$, we see that $f^{-n}(\gamma) \cap W_2$ accumulates on $W^{ss}(p_2) \cap W_2$. Therefore, $W^{ss}(p_2) \cap W_2 \subset \overline{W^{su}(p_2) \cap  W^{ss}(p_1)} \subset \overline{W^{ss}(p_1)}$. Since $\overline{W^{ss}(p_1)}$ is invariant by the action of $f$ and $W^{ss}(p_2) = \bigcup\limits_{n \geqslant 0} f^{-n}(W^{ss}(p_2) \cap W_2)$, we get the inclusion $\overline{W^{ss}(p_2)} \subset \overline{W^{ss}(p_1)}$. Since the choice of $p_1$ and $p_2$ is arbitrary, the result follows from Proposition~\ref{prop:K_as_Finite_Union}.
\end{proof}

\begin{theorem}\label{thm:f_transitive_on_K}
The function $f:K \to K$ is transitive with respect to the trace topology of $S_g$ on $K$.
\end{theorem}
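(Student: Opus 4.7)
Given non-empty relatively open $U, V \subset K$, the goal is to produce some $n \geq 0$ with $f^n(U)\cap V \neq \emptyset$. The strategy combines the density of one flow orbit with the renormalizing commutation $f\circ h_{\lambda t} = h_t\circ f$. I fix a hyperbolic fixed point $p = p_i^\sigma$. By Theorem~\ref{thm:K_connected} combined with Proposition~\ref{prop:stable_leaf_is_flow_trajectory}, the flow orbit $h_{\mathbb{R}}(p) = W^{ss}(p)$ is dense in $K$, so I may pick $t_V \in \mathbb{R}$ and $\delta > 0$ such that $\{h_s(p) : s \in (t_V - \delta, t_V + \delta)\} \subset V$. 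Iterating the commutation and using $f(p)=p$ yields $f^{-n}(h_s(p)) = h_{\lambda^n s}(p)$, so that $f^{-n}(V)$ contains the flow arc through $p$ over the dilated interval $\bigl(\lambda^n(t_V-\delta),\, \lambda^n(t_V+\delta)\bigr)$, whose flow-length $2\lambda^n \delta$ tends to infinity with $n$.

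The next step is to observe that $T_U := \{t \in \mathbb{R} : h_t(p)\in U\}$ is open (by continuity of the flow) and unbounded in both directions: if $T_U$ were contained in a bounded interval $[-M,M]$, then $W^{ss}(p)\cap U = h_{T_U}(p)$ would be a subset of a curve of length at most $2M \sup_K \|v^s\|$, contradicting its density in $U$. Indeed, the local product structure of $K$ coming from hyperbolicity (Theorem~\ref{thm:K_is_hyperbolic}) together with Proposition~\ref{prop:K_as_Finite_Union} makes $U$ locally of the form $I^s \times C^u$, where $C^u$ is a non-trivial Cantor set transverse to the stable direction, in which no finite-length curve can be dense. The same argument yields that $T_V$ is also open and unbounded in both directions.

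It remains to show that for some $n\geq 0$ the dilated interval $\lambda^n(t_V-\delta, t_V+\delta)$ meets $T_U$; this is the main obstacle. The idea is to exploit the freedom to vary the component $(t_V-\delta, t_V+\delta)\subset T_V$ and the geometric scaling $t \mapsto \lambda^n t$: by choosing a component of $T_V$ with large enough ratio (or by chaining several components) the union $\bigcup_{n\geq 0}\lambda^n(t_V-\delta, t_V+\delta)$ covers arbitrarily large regions of $\mathbb{R}_{>0}$ up to geometric factor $\lambda$, and since $T_U$ is itself open and unbounded, the two families must eventually intersect. Once this intersection is obtained, $f^{-n}(V)\cap U \neq \emptyset$, equivalently $f^n(U)\cap V \neq \emptyset$, completing the proof. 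An alternative, more conceptual route would invoke Smale's spectral decomposition: $K$ is an Axiom A attractor for $f^{-1}$, hence a locally maximal hyperbolic set equal to its non-wandering set, and the spectral decomposition into pairwise disjoint closed invariant transitive basic sets must reduce to a single piece because $K$ is connected (Theorem~\ref{thm:K_connected}), so $f|_K$ is transitive.
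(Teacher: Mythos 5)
There is a genuine gap at exactly the step you flag as ``the main obstacle'', and it is not resolved by what you write. The dilates $\lambda^n(t_V-\delta,t_V+\delta)$ of a \emph{fixed} interval have a fixed ratio of endpoints, so their union leaves complementary gaps $[\lambda^n(t_V+\delta),\lambda^{n+1}(t_V-\delta)]$ whose lengths grow geometrically (unless $(t_V+\delta)\geqslant\lambda(t_V-\delta)$, which you cannot arrange in general); an open unbounded set $T_U$ can perfectly well live inside such gaps, so ``open and unbounded in both directions'' is not enough to force $T_U\cap\bigcup_n\lambda^n(t_V-\delta,t_V+\delta)\neq\emptyset$. What would close the argument is a quantitative recurrence statement, e.g.\ that the visit times of $h_t(p)$ to $U$ are syndetic (bounded gaps), since then the arcs of flow-length $2\lambda^n\delta\to\infty$ inside $f^{-n}(V)$ must eventually hit $T_U$; but syndeticity would essentially come from minimality of $h_t$ on $K$, which in this paper is only proved \emph{later} (Section~\ref{sect:minimality_flow}), via the GIET/Rauzy--Veech analysis, and mere density of $W^{ss}(p)=h_{\mathbbm R}(p)$ in $K$ does not give it. Your fallback route is also not available here: in this paper the Axiom~A property of $K$ (Theorem~\ref{thm:axiom_A}) is deduced \emph{from} transitivity together with the closing lemma (to get dense periodic points), so invoking the spectral decomposition of an Axiom~A attractor to prove transitivity is circular in this logical order. (A non-circular conceptual variant would be to observe that $K$ is the homoclinic class $\overline{W^{ss}(p)\cap W^{su}(p)}$ by Proposition~\ref{prop:K_as_Finite_Union} and Theorem~\ref{thm:K_connected}, and that homoclinic classes are transitive, but that is a different argument from the one you give.)

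For comparison, the paper avoids the time-set bookkeeping altogether with a direct crossing argument: it picks a rectangle $V_2\subset U_2$ with vertical and horizontal sides which $W^{ss}(p_2)$ crosses from side to side (using transversality of stable leaves to vertical leaves), uses density of $W^{su}(p_1)$ to find $n$ with $f^{-n}(V_2)$ crossing the Grobman--Hartman linearization box $W_1$ at $p_1$ horizontally, and pushes forward a small vertical segment $\gamma\subset U_1$ so that $f^m(\gamma)$ crosses $W_1$ vertically; the two crossings intersect, giving $f^m(U_1)\cap f^{-n}(U_2)\neq\emptyset$. If you want to keep your flow-based strategy, you must either prove the syndeticity/bounded-gap property of $T_U$ independently of the later minimality result, or replace the dilation-of-one-interval step by a topological intersection argument of the above type; also note that your justification of unboundedness of $T_U$ via a ``local product $I^s\times C^u$ with $C^u$ a Cantor set'' uses a structure that has not been established at this point of the paper.
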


\begin{proof}
Let $U_1$ and $U_2$ be open sets in $S_g$ that have non-empty intersection with $K$. Let $p_1 = p^{\sigma_1}_{i_1}$ and $p_2 = p^{\sigma_2}_{i_2}$ for some $\sigma_1, \, \sigma_2 \in \Sigma$ such that $U_i \cap ( W^{ss}(p_i) \cap W^{su}(p_i)) \neq \emptyset$ for $i=1,2$. Since $W^{ss}(p_2)$ is transverse with all the horizontal leaves, we can find a rectangle $V_2$ contained in $U_2$ whose sides are vertical and horizontal, such that $W^{ss}(p_2)$ crosses $V_2$ \emph{from side to side}.

By density of $W^{su}(p_1)$, there exists $x_2 \in V_2 \cap W^{su}(p_1)$. Let $W_1$ be the open set of linearisation near $p_1$ -- without loss of generality, we can assume $W_1$ to be a rectangle with horizontal and vertical sides. For large enough $n \geqslant 0$, the set $f^{-n}(V_2)$ crosses \emph{vertically} $W_1$.

Let $x_1 \in U_1 \cap W^{ss}(p_1)$ and $\varepsilon >0$ be such that the horizontal segment $\gamma$ of length $\varepsilon$, containing $x_1$ in its interior, is contained in $U_1$. For all large enough $m \geqslant 0$, the line $f^m(\gamma)$ crosses \emph{horizontally} $W_1$. Hence $f^m(U_1) \cap f^{-n}(U_2) \neq \emptyset$.
\end{proof}

It easily follows from the transitivity of $f$ and the closing lemma that periodic points of $f$ are dense in $K$. Therefore $K$ is an Axiom A attractor in the sense of \cite{Ruelle1976measure}.

\begin{theorem}\label{thm:axiom_A}
If $\Sigma^{\varepsilon}$ is an open $\varepsilon$-neighbourhood of $\Sigma$ for some small enough $\varepsilon >0$, $U \coloneqq S_g \smallsetminus \overline{\Sigma^{\varepsilon}}$ and $f^{-1}$ is $\mathcal{C}^2$ away from $\Sigma$, then $K$ is an Axiom A attractor for $f^{-1} : U \to U$.
\end{theorem}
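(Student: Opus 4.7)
The plan is to verify Ruelle's defining conditions of an Axiom A attractor directly, using that most of the ingredients have already been assembled. Recall these conditions: (i) $K$ is a compact, $f^{-1}$-invariant, hyperbolic set; (ii) periodic points of $f^{-1}|_K$ are dense in $K$; (iii) $f^{-1}|_K$ is topologically transitive; (iv) there is an open neighborhood $V$ of $K$ with $\overline{f^{-1}(V)} \subset V$ and $\bigcap_{n\geqslant 0} f^{-n}(V)=K$. Conditions (i) -- (iii) are essentially free from what precedes: hyperbolicity is Theorem \ref{thm:K_is_hyperbolic} (the splitting for $f$ gives the splitting for $f^{-1}$ by swapping the roles of $E^s$ and $E^u$); density of periodic orbits follows from transitivity of $f|_K$ together with the Anosov closing lemma, as noted right before the statement; and the transitivity of $f^{-1}|_K$ is equivalent to that of $f|_K$.

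The substantive step is producing the trapping neighborhood in (iv). I would take $\varepsilon>0$ small enough that two properties hold simultaneously:
\begin{enumerate}[label=(\alph*)]
\item $f(\overline{\Sigma^{\varepsilon}}) \subset \Sigma^{\varepsilon}$ with strict inclusion, which is possible because each $\sigma \in \Sigma$ is an attracting fixed point of $f$ (the eigenvalues of $\D f$ at $\sigma$ read from the explicit formula are $\lambda+\beta_{\sigma} < 1$ in the horizontal direction and $\lambda^{-1} < 1$ in the vertical direction);
\item $K \cap \overline{\Sigma^{\varepsilon}} = \emptyset$, which is allowed since Proposition \ref{prop:existence_ball_in_basin} guarantees $\D(K,\Sigma) \geqslant \min_\sigma |p^\sigma| > 0$.
\end{enumerate}
Then I would set $V = U = S_g \smallsetminus \overline{\Sigma^{\varepsilon}}$. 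By (b), $K \subset V$. For the trapping inclusion, note $\overline{V} = S_g \smallsetminus \Sigma^{\varepsilon}$, so by (a) we have $\overline{\Sigma^{\varepsilon}} \subset f^{-1}(\Sigma^{\varepsilon})$, hence $f^{-1}(\overline{V}) = S_g \smallsetminus f^{-1}(\Sigma^{\varepsilon}) \subset S_g \smallsetminus \overline{\Sigma^{\varepsilon}} = V$.

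For the identity $\bigcap_{n\geqslant 0} f^{-n}(V) = K$, observe that a point $x$ lies in this intersection exactly when $f^{n}(x) \in V$ for every $n \geqslant 0$ (using that $f$ is a homeomorphism). If $x \in K$, then $f^n(x) \in K \subset V$ by (b); conversely, if $x \in V \smallsetminus K$, then $x$ belongs to some basin $U_{\sigma}$, so $f^n(x) \to \sigma$ by definition of the basin, which forces $f^n(x) \in \Sigma^{\varepsilon}$ for all large $n$, so $x$ fails to lie in $\bigcap_{n\geqslant 0} f^{-n}(V)$. This gives the equality and completes (iv).

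There is no real analytic obstacle in this argument -- the content is bookkeeping together with the already established dynamical picture (the attraction of $\Sigma$ for $f$ and the separation $K \cap \Sigma = \emptyset$). The only point requiring a little care is the simultaneous compatibility of (a) and (b) in the choice of $\varepsilon$, but both conditions are open in $\varepsilon$ and hold for all sufficiently small positive $\varepsilon$, so they can be imposed together. The $\mathcal{C}^2$ hypothesis on $f^{-1}$ does not enter the verification of the four conditions themselves; it is part of the standard smoothness setting for ``Axiom A attractor in the sense of \cite{Ruelle1976measure}'' so that the subsequent SRB-measure theory of Section~6 applies.
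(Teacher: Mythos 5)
Your proof is correct and is essentially the paper's (largely unwritten) argument: the paper just invokes the already-established compactness, invariance, hyperbolicity, transitivity and the closing lemma, and you supply the remaining bookkeeping — the trapping neighbourhood $V=S_g\smallsetminus\overline{\Sigma^{\varepsilon}}$ with $\overline{f^{-1}(V)}\subset V$ and $\bigcap_{n\geqslant 0}f^{-n}(V)=K$ — which is exactly what Ruelle's definition requires. One small caveat: $f$ is not differentiable at a conical point, so instead of ``eigenvalues of $\D f$ at $\sigma$'' you should justify $f(\overline{\Sigma^{\varepsilon}})\subset\Sigma^{\varepsilon}$ via the contraction $\D(f(x),\sigma)<(1-\delta)\D(x,\sigma)$ near $\sigma$ from Proposition~\ref{prop:open_cover} (or Proposition~\ref{prop:existence_ball_in_basin}), which holds for all sufficiently small $\varepsilon$.
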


\section{The induced GIET}\label{sect:f_is_mixing}

In this part we construct a GIET $T$ as the Poincar\'e map of $h_t$ to some transversal segment, and we prove that it satisfies the conclusion of Theorem~\ref{thm:main_1}. For the semi-conjugacy, it is sufficient to prove --~thanks to a result by Yoccoz \cite{yoccoz2005echanges}~-- that $T$ follows the same orbit as a self-similar IET when renormalized by the Rauzy--Veech algorithm. To do so, we construct multiple partitions into rectangles of $S_g$. Minimality and unique ergodicity of $T$ then follow from the one of the semi-conjugated self-similar IET. Since $h_t$ is the suspension flow over $T$, $h_t$ is also uniquely ergodic, of unique invariant measure $\mu$, supported by $K$. Because of the commutation relation (\ref{eq:commut_flow}) between $f$ and $h_t$, the measure $\mu$ is also invariant by $f$. We prove that $f$ is mixing with respect to $\mu$.

\begin{theorem}\label{thm:unique_ergodicité_(h_t)}
The flow $h_t$ is uniquely ergodic. Furthermore the support of the invariant measure is $K$.
\end{theorem}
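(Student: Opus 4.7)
The plan is to reduce the unique ergodicity of $h_t$ to that of an induced generalized interval exchange transformation via Yoccoz's semi-conjugacy criterion, and then transfer it back to the suspension flow. I would first pick a horizontal segment $I \subset S_g \smallsetminus \Sigma$ transverse to $v^s$, which is feasible because the vertical component of $v^s$ is uniformly bounded by Theorem \ref{thm:vs_lipschitz} and $I$ can be chosen at positive distance from $\Sigma$. The Poincar\'e first-return map $T : I \to I$ of $h_t$ is then a GIET whose return-time function is bounded away from $0$ and $\infty$, since $h_t$ is complete on the compact set $K$ and $v^s$ does not vanish there. For the degenerate parameter $\beta = 0$ one has $f_0 = \varphi$, the flow $h_t^0$ is the linear horizontal flow of $\varphi$, and its return map $T_0$ is an IET; because $\varphi$ contracts the horizontal foliation by $\lambda^{-1}$, a standard Thurston--Veech argument shows that $T_0$ is self-similar and uniquely ergodic.

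Next, I would exploit the commutation $f \circ h_{\lambda t} = h_t \circ f$ together with the continuity $\beta \mapsto v^s_\beta$ of Theorem \ref{thm:regularity_beta_vs} to prove that $T$ follows the very same infinite Rauzy--Veech path as the self-similar IET $T_0$. The continuity in $\beta$ gives agreement of the combinatorial data on any prescribed finite initial segment of the Rauzy--Veech induction, while the self-similarity of $T$ inherited from renormalization by $f$ propagates the coincidence to the full infinite path. Applying then Proposition~7 of \cite{yoccoz2005echanges} produces a continuous non-decreasing semi-conjugacy $\pi : I \to I$ with $\pi \circ T = T_0 \circ \pi$. Pushing forward any $T$-invariant probability $\mu$ by $\pi$ yields a $T_0$-invariant probability; unique ergodicity of $T_0$ together with the fact that $\pi$ collapses only a countable family of wandering intervals force $\mu$ itself to be unique, hence $T$ is uniquely ergodic.

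Finally, I would realize $h_t|_K$ as the suspension of $T|_{K \cap I}$ under the bounded continuous return-time roof function; unique ergodicity of $T$ combined with the integrability of the roof function yields unique ergodicity of $h_t$ on $K$, with a unique invariant probability $\mu$. To identify the support, I would use Proposition \ref{prop:stable_leaf_is_flow_trajectory} and Theorem \ref{thm:K_connected} to see that the $h_t$-orbit of any $p_i^\sigma$ is $W^{ss}(p_i^\sigma)$ and is dense in $K$; together with the compactness and $h_t$-invariance of $K$, this provides at least one invariant probability supported exactly on $K$, and the support must coincide with $K$. No invariant probability can charge $U_\Sigma$, since any orbit in $U_\Sigma$ either terminates at some $\sigma \in \Sigma$ in finite time (Lemma \ref{lemma:ht_singularities}) or is carried by $f^n$ arbitrarily close to $\Sigma$, and therefore leaves the fixed transversal $I$ permanently.

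The main obstacle is the second step, namely the verification that $T$ and $T_0$ share the \emph{same} complete Rauzy--Veech path rather than only agreeing on initial segments. The continuity in $\beta$ alone secures only finite-time agreement; combining it with the self-similarity forced by the commutation relation to propagate equality to all scales is the delicate point, and it is what allows Yoccoz's Proposition~7 to apply and simultaneously underlies Theorem \ref{thm:giet_wandering_intervals}.
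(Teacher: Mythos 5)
Your overall strategy --- induce a GIET $T$ from $h_t$ on a transversal, show it follows the same full Rauzy--Veech path as the IET $T_0$ coming from $\varphi$, invoke Yoccoz's semi-conjugacy, and pass to the suspension --- is exactly the paper's route, but the step you yourself flag as the ``main obstacle'' is a genuine gap, and the mechanism you sketch for it does not work as stated. Continuity of $\beta \mapsto v^s_{\beta}$ (Theorem \ref{thm:regularity_beta_vs}) does not by itself give agreement of a prescribed finite initial segment of the Rauzy path between $T=T_{\beta}$ and $T_0=T_{0}$: it only gives local constancy of a finite segment of the path near a given parameter, and only as long as no connection appears at that depth. To join the actual parameter $\beta$ to $0$ one must exclude connections for \emph{every} intermediate parameter $\beta^*$ and every depth; the ``self-similarity of $T$ inherited from renormalization by $f$'' is not established and cannot be used as you propose, since locating where Rauzy induction of $T$ lands is essentially what is to be proved. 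The paper's missing ingredient is a direct no-connection argument: a connection for $T_{\beta^*}$ corresponds geometrically to a side of the deformed rectangle decomposition $\mathcal{R}_{\beta^*}$, i.e.\ a trajectory of the stable flow joining two conical points; iterating $f_{\beta^*}$ contracts its length to zero (because $\D f \, v^s = \lambda^{-1} v^s \circ f$) while its endpoints remain in $\Sigma$, at distance at least $\delta_{\Sigma}$, a contradiction. With connections excluded for all parameters, each finite segment of the path is locally constant in $\beta$, hence constant on the whole interval, and since $T_0$ has no connection its path is full, so \cite{yoccoz2005echanges} applies.

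Two secondary points. Your choice of transversal is wrong-footed: $v^s_{\beta}$ has horizontal component identically $1$ and only a bounded (possibly vanishing) vertical component, so it is transverse to \emph{vertical} segments, whereas transversality to a horizontal segment may fail; the paper takes $\gamma$ along a vertical separatrix issued from a conical point, which is also what makes the rectangle decomposition and the identification $\Omega = \gamma \cap K$ available. Concerning the support: a dense $h_t$-orbit ($W^{ss}(p_i^{\sigma})$) does not by itself force the unique invariant measure to have support equal to $K$; one needs minimality, and in the paper this is obtained as $\supp \nu = \Omega$ by minimality of $T|_{\Omega}$ (proved through the semi-conjugacy) together with $h_{[0,C]}(\Omega)=K$. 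Likewise, complete forward $h_t$-orbits in $U_{\Sigma}$ do \emph{not} leave the transversal permanently --- they accumulate on $K$ and keep returning near it; they are ruled out because they meet the transversal in wandering points (wandering intervals or isolated points of $\overline{S(\infty)}$), which no invariant measure can charge.
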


\begin{corollary}\label{corol:f_is_mixing}
The unique invariant measure $\mu$ of $h_t$ is also invariant by $f$, and $f$ is mixing with respect to $\mu$.
\end{corollary}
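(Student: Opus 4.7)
The plan is twofold: first to verify that $\mu$ is $f$-invariant using the commutation relation $f \circ h_{\lambda t} = h_t \circ f$ and Theorem~\ref{thm:unique_ergodicité_(h_t)}, and then to exploit the same relation in a renormalization argument giving mixing.

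For $f$-invariance, I would show that the pushforward $f_*\mu$ is $h_t$-invariant: for any $t \in \mathbbm{R}$,
\[ (h_t)_*(f_*\mu) = (h_t \circ f)_*\mu = (f \circ h_{\lambda t})_*\mu = f_*(h_{\lambda t})_*\mu = f_*\mu, \]
using $h_{\lambda t}$-invariance of $\mu$. Unique ergodicity of $h_t$ then forces $f_*\mu = \mu$.

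For mixing, by density of continuous functions in $L^2(\mu)$ it is enough to prove, for continuous $\varphi, \psi : K \to \mathbbm{R}$,
\[ \int \varphi \cdot (\psi \circ f^n)\, d\mu \xrightarrow[n\to\infty]{} \int\varphi\, d\mu \int\psi\, d\mu. \]
By the $f$-invariance just established, this equals $\int (\varphi \circ f^{-n})\, \psi\, d\mu$. Iterating the commutation yields $f^{-n}\circ h_s = h_{\lambda^n s}\circ f^{-n}$, hence by the change of variable $u = \lambda^n s$,
\[ \frac{1}{T}\int_0^T (\varphi\circ f^{-n})\circ h_s\, ds = \varphi_{\lambda^n T}\circ f^{-n}, \qquad \varphi_U := \frac{1}{U}\int_0^U \varphi\circ h_u\, du. \]
Unique ergodicity of $h_t$ gives $\varphi_U \to \int\varphi\, d\mu$ uniformly as $U\to\infty$, so choosing $T_n \to 0$ with $\lambda^n T_n \to \infty$ (for example $T_n = \lambda^{-n/2}$), the function $\varphi_{\lambda^n T_n}\circ f^{-n}$ tends uniformly to the constant $\int\varphi\, d\mu$, and therefore $\int (\varphi_{\lambda^n T_n}\circ f^{-n})\, \psi\, d\mu \to \int\varphi\, d\mu \int\psi\, d\mu$.

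The final step is to control the replacement error. Using $h_s$-invariance of $\mu$, one gets
\[ \int (\varphi\circ f^{-n})\psi\, d\mu - \int (\varphi_{\lambda^n T_n}\circ f^{-n})\psi\, d\mu = \frac{1}{T_n}\int_0^{T_n}\!\!\int (\varphi\circ f^{-n})\,(\psi - \psi\circ h_{-s})\, d\mu\, ds, \]
whose absolute value is at most $\|\varphi\|_{L^2(\mu)}\sup_{0\leqslant s\leqslant T_n}\|\psi - \psi\circ h_{-s}\|_{L^2(\mu)}$. Since the flow is continuous on $K$ compact and $\psi$ is continuous, $\psi\circ h_{-s} \to \psi$ uniformly as $s\to 0$, so this error vanishes with $T_n \to 0$. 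The delicate point of the plan, and the main obstacle, is balancing the two competing scales: $T_n$ must be small enough that the short-time average approximates $\varphi\circ f^{-n}$, yet $\lambda^n T_n$ must be large enough that the long-time ergodic average along $h_t$ is close to $\int \varphi\, d\mu$; the renormalization relation $f^{-n} \circ h_s = h_{\lambda^n s}\circ f^{-n}$ is exactly what makes these two scales compatible.
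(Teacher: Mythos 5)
Your proof is correct. The invariance half is exactly the paper's argument: push $\mu$ forward by $f$, use the commutation relation to see that $f_*\mu$ is $h_t$-invariant, and conclude by unique ergodicity. For the mixing half, however, you take a genuinely different route. The paper argues softly: it takes $F \in L^2(\mu)$ with zero mean, notes that $\|F\circ f^{n}\circ h_t - F\circ f^{n}\|_{L^2} = \|F\circ h_{\lambda^{-n}t} - F\|_{L^2} \to 0$ (by $f$-invariance of $\mu$ and density of continuous functions), extracts a weak subsequential limit $\bar F$ of $(F\circ f^n)_n$ via Banach--Alaoglu, identifies it as $h_t$-invariant, hence constant and zero by unique ergodicity, which is precisely the functional characterization of mixing. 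You instead run a direct two-scale Birkhoff-averaging argument on correlations of continuous observables: replace $\varphi\circ f^{-n}$ by its flow average over a window $[0,T_n]$, use $f^{-n}\circ h_s = h_{\lambda^n s}\circ f^{-n}$ to convert it into an average over $[0,\lambda^n T_n]$, invoke the uniform convergence of ergodic averages on the compact invariant set $K$ (a standard consequence of unique ergodicity of $h_t|_K$), and control the replacement error by uniform continuity of $\psi\circ h_{-s}$ for small $s$; the choice $T_n=\lambda^{-n/2}$ reconciles the two scales, and the reduction from $L^2$ to continuous observables is routine since $\|\varphi\circ f^{-n}\|_{L^2(\mu)}=\|\varphi\|_{L^2(\mu)}$. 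Both proofs rest on the same two pillars -- the renormalization relation and unique ergodicity -- but the paper's weak-compactness argument is shorter and avoids uniform ergodic averages, while yours is more explicit about the mechanism (equidistribution of long flow windows produced by the renormalization) and would become quantitative if one had a rate for the flow's Birkhoff averages, which the soft argument cannot give. The only points worth making explicit in a final write-up are that all objects are restricted to $K$, where the flow is complete, jointly continuous, and uniquely ergodic as a flow on a compact set, so that $\varphi_U \to \int\varphi\,d\mu$ uniformly on $K$ and $\varphi_{\lambda^nT_n}\circ f^{-n}$ is well defined $\mu$-almost everywhere.
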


This theorem and its corollary are a restatement of Theorem~\ref{thm:main_2}(iii).

\begin{proof}[Proof of Corollary~\ref{corol:f_is_mixing}.]
Since $K$ is invariant by $f$ and by the flow $h_t$ and since $h_t$ is well defined for all $t$ on $K$, we have
\begin{align*}
f_* \mu &= f_* ((h_t)_* \mu) = (f \circ h_t)_* \mu = (h_{\lambda^{-1}t})_* (f_* \mu).
\end{align*}
Therefore the measure $f_* \mu$ is invariant by the flow $h_t$. By unique ergodicity of the flow, we must have $f_* \mu = \mu$.

Let $F \in \L2$  be such that $\int F \, \D\mu = 0$. We now prove that the sequence $(F \circ f^n)_n$ weakly converges to zero. By invariance of the measure, the sequence is bounded in the $\L2$ norm. By the Banach-Alaoglu-Bourbaki theorem, this sequence lives in a weakly compact set. Let $\bar{F}$ be a sub-sequential weak limit of $(F \circ f^n)_n$ and let $(n_k)_k$ be a strictly increasing sequence of integers such that $F \circ f^{n_k} \xrightharpoonup[k \to \infty]{} \bar{F}$. On the other hand,
\begin{align*}
||F \circ f^{n_k} \circ h_t - F \circ f^{n_k}||_{L^2} &= ||F \circ h_{\lambda^{-n_k} t} \circ f^{n_k} - F \circ f^{n_k}||_{L^2},\\
&= ||F \circ h_{\lambda^{-n_k} t} - F ||_{L^2} \xrightarrow[k \to \infty]{} 0,
\end{align*}
where the final limit follows from the density of continuous functions in $\L2$. Now, $F \circ f^{n_k} \circ h_t - F \circ f^{n_k}$ converges weakly to $\bar{F} \circ h_t - \bar{F}$. The identification of the strong limit with the weak limit gives $\bar{F} \circ h_t - \bar{F}=0$. By unique ergodicity of $(h_t)_t$, $\bar{F}$ is constant. By integration, this constant is zero. Hence all the sub-sequential weak limit of $(F \circ f^n)_n$ are $0$, which proves the mixing.
\end{proof}

In order to prove Theorem~\ref{thm:unique_ergodicité_(h_t)}, we heavily rely on the semi-conjugacy result from \cite[Proposition 7]{yoccoz2005echanges}, more precisely if an IET and a GIET have the same combinatorial datum and follow a same full path in the Rauzy diagram -- when renormalized by the Rauzy--Veech algorithm -- then there exists a continuous, increasing and surjective function that semi-conjugates the two transformations.

\subsection{Construction of a GIET and $h_t$ as its suspension flow}\label{sect:construct_rect_decomp}

Recall some notation from Section~\ref{sect:first_def_perturb}. Let $\varphi$ be the pseudo-Anosov map that we perturbed in order to get $f$. By construction, $\varphi$ fixes each conical point and each separatrix. Let $\sigma \in \Sigma$ be a conical point and $\gamma_0$ be a segment of a horizontal separatrix starting at $\sigma$. From a general property of the pseudo-Anosov maps, there exists a decomposition in rectangles $\mathcal{R}_0=(R^0_1, \ldots, R^0_{\ell})$ of $S_g$ such that (up to shortening $\gamma_0$) the \emph{bases} of these rectangles form a partition of $\gamma_0$ (this is a particular case of the decomposition into rectangles and cylinders from \cite[Proposition~5.3.4]{hubbard2016teichmuller}).

Denote by $\partial_v \mathcal{R}_0$ (resp. $\partial_h \mathcal{R}_0$) the vertical (resp. horizontal) components of $\bigcup_i \partial R_i$. By construction, $\partial_h \mathcal{R}_0 = \gamma_0$. Now, $\partial_v \mathcal{R}_0$ is made of portions of trajectories for the vertical flow associate to $\varphi$ that connect a conical point to $\gamma_0$, but don't intersect $\gamma_0$ at some other previous time. 

Since the family of vector fields $(x,\beta) \mapsto v^s_{\beta}(x)$ is continuous, we can deform by some homotopy $\mathcal{R}_0$ into $\mathcal{R}_{\beta}=(R^{\beta}_1, \ldots, R^{\beta}_{\ell})$ while preserving the horizontal direction, where $\beta$ is the amplitude of the perturbations in the construction of $f$. In more details, the homotopy sends the portions of trajectories of the vertical flow that connect conical points to $\gamma_0$, to the portions of trajectories of $h_t$ which contain a conical point. Since the vector field $v^s_{\beta}$ has its vertical component constant equal to 1, these latter trajectories are the ones connecting conical points to $\gamma$, where $\gamma$ is a slightly longer or shorter copy of $\gamma_0$. Since any two trajectories do not intersect, these portions of trajectories of $h_t$ are still the shortest ones that connect conical points to $\gamma$

\begin{figure}
\begin{center}
\begin{tabular}{ccc}
\includegraphics[width=0.40\linewidth]{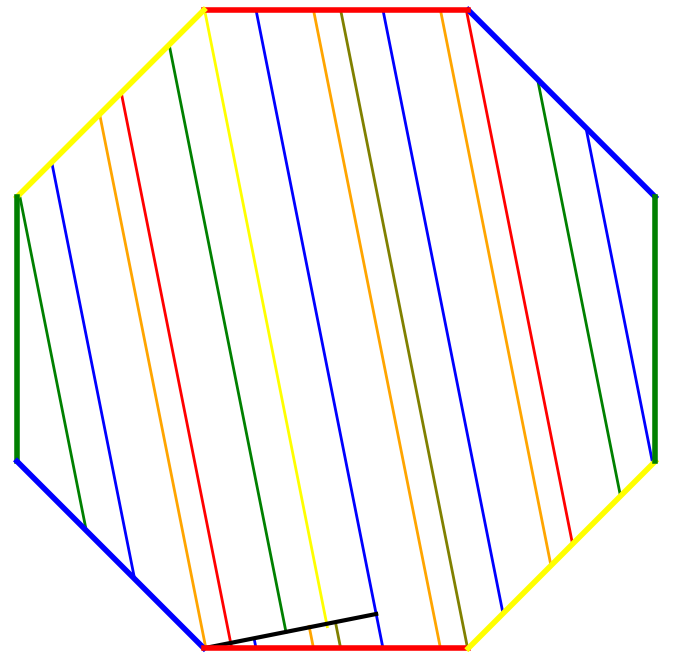} & &
\includegraphics[width=0.40\linewidth]{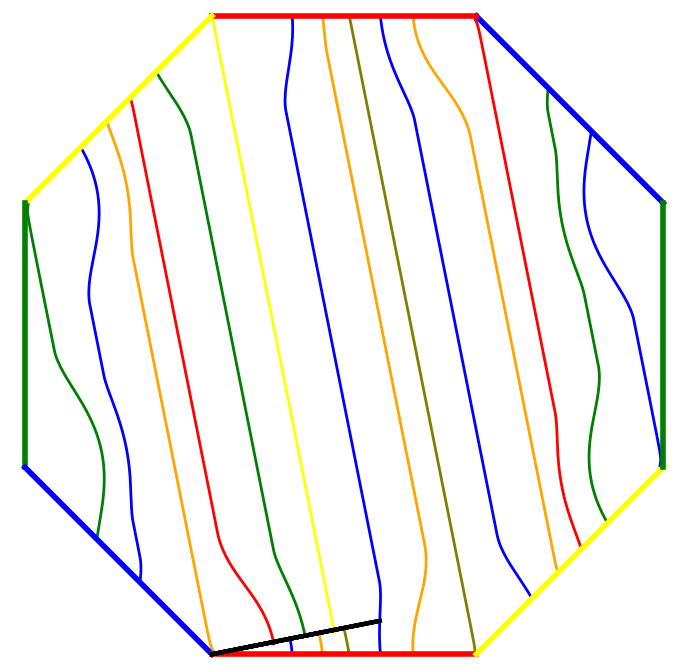} \\
Rectangle decomposition $\mathcal{R}_0$ & & Perturbed rectangle decomposition $\mathcal{R}_{\beta}$
\end{tabular}
\caption{\label{fig:zip_rect_octagon} Rectangle decompositions in the case of a flat genus two surface. The pseudo-Anosov transformation on this surface is explicited in the appendix of \cite{sinai2005weak} as the composition of an upper triangular matrix with its transpose matrix. The contracting and expanding directions of $\varphi$ match (up to a small rotation) with respectively the vertical and horizontal directions of the figures.}
\end{center}
\end{figure}

\begin{figure}
\begin{center}
\begin{tabular}{ccc}
\includegraphics[width=0.40\linewidth]{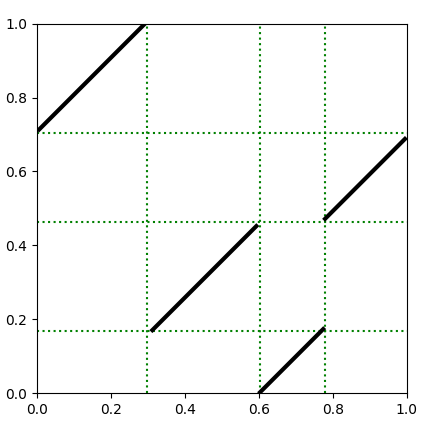} & &
\includegraphics[width=0.40\linewidth]{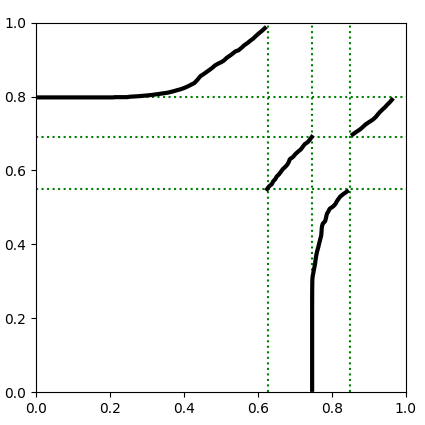} \\
Graph of the IET associated to $\mathcal{R}_0$ & & Graph of the GIET associated to $\mathcal{R}_{\beta}$
\end{tabular}
\caption{\label{fig:IET_and_GIET_associated} Graphs of the induced IET and GIET induced respectively by the rectangle decompositions in Figure~\ref{fig:zip_rect_octagon} -- both flows are going \emph{``downward"}.}
\end{center}
\end{figure}

Call $T$ (resp. $T_0$) the Poincar\'e first return map to $\gamma$ (resp $\gamma_0$) of $h_t$ (resp. of the unit speed vertical flow associate to $\varphi$). It is clear from the construction that $T_0$ is an IET and that $T$ is a GIET. Since for all $\beta$, the vertical component of $v^s_{\beta}$ is equal to one, both $T$ and $T_0$ have the same combinatorial data. Furthermore, by construction, $T$ and $T_0$ have the same path in the Rauzy-graph: fix an integer $n \geqslant 1$, then because of the continuity with respect to $\beta$, there exists a neighbourhood of $\beta_0$ such that all of the induced GIET have the same first $n$ steps in the Rauzy-Veech algorithm. Consider the maximal open set with this property, and let $\beta^*$ be a point in its boundary. We distinguish two cases: assume first that for $\beta^*$ the algorithm doesn't stop at the $n$-th step. Then there is a neighbourhood of $\beta^*$ contradicting the maximality. In the remaining case, the algorithm stops at the $n$-th step: the induced GIET must have a connection. This corresponds geometrically to a side of a rectangle of $\mathcal{R}_{\beta^*}$ connecting a conical point to another one: this is impossible since $f_{\beta^*}$ would contract this curve. Thus $T$ and $T_0$ have the same first $n$ steps in the Rauzy-Veech algorithm, for any $n$.

Since foliations associated to a pseudo-Anosov have no closed leaf (see \cite{farb2011primer}), it follows that $T_0$ has no connection, hence, by \cite{yoccoz2005echanges}, the path of $T_0$ in the Rauzy graph is full and so $T_0$ and $T$ are semi-conjugated by a continuous, increasing and surjective function. Also, since $T_0$ has no connection, it is minimal.

We summarize all this in the following proposition.

\needspace{10em}
\begin{proposition}\label{prop:semiconjugacy}
If $\sigma$ is a conical point, there exist two portions of a same  horizontal separatrix (both containing $\sigma$) $\gamma_0$ and $\gamma$, and maps $T:\gamma \to \gamma$, $T_0:\gamma_0 \to \gamma_0$ such that:
\begin{enumerate}[label=(\roman*)]
\item $T_0$ is an IET and $T$ is a GIET,
\item $T_0$ is the Poincar\'e first return map of the vertical flow associated to $\varphi$,
\item $T$ is the Poincar\'e first return map of the flow $h_t$ associated to $f$,
\item $T_0$ and $T$ have the same combinatorial data, and the same path in the Rauzy-graph,
\item there exists a continuous, increasing and surjective function $h$ such that $h \circ T = T_0 \circ h$.
\end{enumerate}
\end{proposition}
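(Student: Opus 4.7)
The plan is to follow very closely the construction already sketched in the discussion above the proposition, organizing it into the five claimed items. First, I would fix a conical point $\sigma\in\Sigma$ and a vertical separatrix emanating from $\sigma$. Since $\varphi$ is linear pseudo-Anosov with Abelian differential, a standard rectangle-decomposition result for pseudo-Anosov maps (as in \cite[Proposition~5.3.4]{hubbard2016teichmuller}) furnishes a suitable initial segment $\gamma_0$ of this separatrix and a finite family of rectangles $\mathcal{R}_0=(R^0_1,\dots,R^0_{|\Sigma|})$ whose bases partition $\gamma_0$. By definition, the first return map $T_0$ to $\gamma_0$ of the horizontal unit-speed flow of $\varphi$ is then a standard IET, which settles (ii) and half of (i).

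Next, I would use the continuous dependence of the stable vector field on the perturbation parameter (Theorem~\ref{thm:regularity_beta_vs}) to transport $\mathcal{R}_0$ into a corresponding decomposition $\mathcal{R}_\beta$. Concretely, the vertical sides of $\mathcal{R}_0$ are precisely the portions of leaves of the horizontal foliation of $\varphi$ that emanate from conical points and hit $\gamma_0$ for the first time; replacing these pieces of horizontal leaves by the corresponding initial arcs of trajectories of $h_t$ (which exist by Lemma~\ref{lemma:ht_singularities} since they terminate at conical points) yields the new rectangles. Since the horizontal component of $v^s_\beta$ is identically $1$ in the flat coordinates, the resulting base $\gamma$ is just a slightly lengthened or shortened copy of $\gamma_0$ along the same separatrix, and I would take $T$ to be the first return map of $h_t$ to $\gamma$. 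The usual verification that a first return map to a transverse interval of a flow which is piecewise an increasing homeomorphism yields a GIET gives (iii) and finishes (i); the fact that $\mathcal{R}_0$ and $\mathcal{R}_\beta$ have the same labelled combinatorics (because the homotopy is continuous and no rectangle degenerates) gives the combinatorial-data part of (iv).

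The heart of the matter, and the step I expect to be the main obstacle, is showing that $T$ follows the same infinite Rauzy--Veech path as $T_0$. The strategy is by contradiction along the homotopy in $\beta$: if the two paths differed, then by continuity of $\beta\mapsto v^s_\beta$ there would be a critical parameter $\beta^*$ at which the GIET $T_{\beta^*}$ fails to be defined at some discontinuity, i.e.\ admits a \emph{connection} in Yoccoz's sense. Geometrically, such a connection corresponds to an arc of $h_t$-trajectory inside a rectangle of $\mathcal{R}_{\beta^*}$ that joins two conical points of $\Sigma$. But $h_t$ is renormalized by $f_{\beta^*}$ via $f_{\beta^*}\circ h_{\lambda t}=h_t\circ f_{\beta^*}$, and iteration of $f_{\beta^*}$ would contract this arc to arbitrarily short length while keeping it connecting two points of $\Sigma$; combined with the uniform lower bound $\mathrm{d}(K,\Sigma)>0$ obtained from Proposition~\ref{prop:existence_ball_in_basin}, this yields the desired contradiction.

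Finally, (v) will be obtained by invoking Yoccoz's criterion \cite[Proposition~7]{yoccoz2005echanges}: since the vertical foliation of $\varphi$ admits no closed leaf (as recalled in \cite{farb2011primer}), $T_0$ has no connection, so its Rauzy--Veech path is \emph{full}; the coincidence of paths established in the previous paragraph then allows Yoccoz's theorem to produce the continuous, increasing and surjective $h$ with $h\circ T=T_0\circ h$. The same absence of connection implies minimality of $T_0$, a fact that will be used in the subsequent proof of unique ergodicity of $h_t$.
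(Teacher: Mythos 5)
Your proposal follows essentially the same route as the paper: the rectangle decomposition of \cite[Proposition 5.3.4]{hubbard2016teichmuller} with bases partitioning $\gamma_0$, the deformation of $\mathcal{R}_0$ into $\mathcal{R}_\beta$ via the continuity of $(x,\beta)\mapsto v^s_\beta(x)$, the contradiction at a critical $\beta^*$ where a connection of $T_{\beta^*}$ would give an $h_t$-arc joining two conical points that iteration of $f_{\beta^*}$ contracts, and finally Yoccoz's criterion to obtain the semi-conjugacy $h$. The only (cosmetic) correction: the final contradiction should be quantified by the minimal distance $\delta_\Sigma$ between conical points (the arc keeps joining fixed points of $\Sigma$ while its length tends to zero), not by $\mathrm{d}(K,\Sigma)>0$, since a connecting arc need not meet $K$.
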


\subsection{Minimality of the flow on $K$}\label{sect:minimality_flow}

In this part we prove that the map $T$ -- from which $h_t$ is the suspension flow -- is minimal on its nonwandering set. To do so, we rely on the analysis carried out in \cite{yoccoz2005echanges}. From this, we deduce that the flow $h_t$ acts minimally on $K$ -- actually, we also prove that $K$ is an attractor for positive and negative times. This property will be useful to prove that the support of the unique invariant measure of $h_t$ is $K$.

As in \cite{yoccoz2005echanges}, define $S(\infty)$ as the union of the forward orbit of the discontinuity points of $T^{-1}$ and the backward orbits of the discontinuity points of $T$. Similarly, define $S_0(\infty)$ from the discontinuities of $T_0$ and $T_0^{-1}$. By construction, $h$ is an increasing bijection from $S(\infty)$ to $S_0(\infty)$.

Define $\Omega$ as the set of non-isolated points of $\overline{S(\infty)}$. Clearly, $\Omega$ is a closed set. We now prove that $T$ is minimal on $\Omega$.

\begin{theorem}\label{thm:T_is_minimal}
When restricted to the set $\Omega$, $T$ is minimal.
\end{theorem}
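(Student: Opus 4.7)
The plan is to leverage the semi-conjugacy $h \circ T = T_0 \circ h$ established in the preceding subsection together with the minimality of the IET $T_0$ on $\gamma_0$, which holds because the vertical foliation of the pseudo-Anosov has no closed leaf, hence $T_0$ has no connection and \cite[Proposition 7]{yoccoz2005echanges} applies. Given $x \in \Omega$, the decomposition lemma lets me assume without loss of generality that $x \in \Omega_+$; the case $x \in \Omega_-$ is treated symmetrically, using $T^{-1}$ and the inclusion $T^{-1}(\Omega_-) \subset \Omega_-$. Setting $F := \overline{\{T^n(x) : n \geq 0\}}$, the invariance $T(\Omega_+) \subset \Omega_+$ together with closedness of $\Omega_+$ immediately gives $F \subset \Omega_+ \subset \Omega$.

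Applying the monotone continuous surjection $h$ turns $h(F)$ into a compact, forward $T_0$-invariant subset of $\gamma_0$ containing $h(x)$; by minimality of $T_0$ it follows that $h(F) = \gamma_0$. For any $y \in \Omega_+$, one then obtains $z \in F$ with $h(z) = h(y)$. If $h^{-1}(h(y)) = \{y\}$, then $z = y \in F$ and the proof is complete for this $y$. Otherwise $h^{-1}(h(y)) = [a,b]$ is the closure of a wandering interval of $T$; since its interior meets neither $S(\infty)$ nor $\overline{S(\infty)}$ (orbits of discontinuities cannot enter the interior of a wandering interval, by definition of wandering), both $y$ and $z$ lie in $\{a,b\}$, and the only nontrivial situation is $\{z,y\} = \{a,b\}$.

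The wandering-interval case is the principal obstacle. My plan here is to use that $T^n([a,b]) = [T^n(a), T^n(b)]$ for every $n \geq 0$ -- no $T^n$ has a discontinuity in $[a,b]$ -- that these iterates are pairwise disjoint inside the fixed interval $\gamma$, and hence $|T^n(b) - T^n(a)| \to 0$. Combined with the fact that both endpoints of every wandering interval belong to $\Omega_+$ (they accumulate the forward orbits of the discontinuities of $T^{-1}$ and are thus non-isolated points of $\overline{S(\infty)_+}$), one can show that the quotient of $\gamma$ obtained by collapsing the closure of each wandering interval to a single point is conjugate to $T_0$ via the induced map of $h$, and that $F$ descends to a closed forward-invariant subset of this quotient; minimality of $T_0$ forces the descent to cover the full quotient, and lifting back one obtains $\{a,b\} \subset F$, hence $y \in F$. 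This gives $F = \Omega_+$; the symmetric argument on $\Omega_-$, together with the decomposition $\Omega = \Omega_+ \cup \Omega_-$, yields minimality of $T|_\Omega$.
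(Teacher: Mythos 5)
Your reduction up to the critical configuration is sound: taking $F$ the closure of the forward orbit, obtaining $h(F)=\gamma_0$ from minimality of $T_0$, and observing that a nondegenerate fiber of $h$ has no point of $\Omega$ in its interior (because $h$ is injective on $S(\infty)$), so that $y$ and the point $z\in F$ with $h(z)=h(y)$ are the two endpoints $a,b$ of the closed fiber. The genuine gap is in how you close this case. Collapsing the closures of the wandering components and projecting, the statement that the descent of $F$ ``covers the full quotient'' is nothing more than the equality $h(F)=\gamma_0$ you already had, and ``lifting back'' only yields that $F$ meets every fiber, i.e.\ that \emph{at least one} of the two endpoints of $[a,b]$ lies in $F$ --- which is exactly the situation you started from, not $\{a,b\}\subset F$; the argument is circular. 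The shrinking estimate $|T^n(b)-T^n(a)|\to 0$ does not repair it: it produces points of $F$ near $T^n(b)$ for large $n$, not near $b$ itself. (The side claim that both endpoints of every wandering interval lie in $\Omega_+$ is also asserted without proof, and is not what is needed.) What is missing is a one-sided approximation argument: by minimality of $T_0$, the orbit $T_0^n(h(x))$ enters $\left(h(y),h(y)+\varepsilon\right)$ for every $\varepsilon>0$ (when $h(y)$ is not the right endpoint of $\gamma_0$); since $h$ is nondecreasing, the corresponding points $T^n(x)$ lie to the right of $b$, and any subsequential limit $w$ of such points with $h(T^n(x))\to h(y)$ satisfies $h(w)=h(y)$ and $w\geqslant b$, hence $w=b\in F$; symmetrically from the left for $a$, with a separate easy discussion when $h(y)$ is an endpoint of $\gamma_0$.

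There is a second, smaller gap at the very end: proving that forward orbits of points of $\Omega_+$ are dense in $\Omega_+$ and backward orbits of points of $\Omega_-$ are dense in $\Omega_-$ does not by itself give minimality of $T$ on $\Omega=\Omega_+\cup\Omega_-$; for $x\in\Omega_+\smallsetminus\Omega_-$ nothing you proved places $\Omega_-$ in the closure of the full orbit of $x$. The cure is simply to run your fiber argument for every $y\in\Omega$ (it never uses more than $y\in\Omega$), so that $F\supseteq\Omega$ directly. For comparison, the paper's proof avoids the fiber analysis altogether: if the orbit of $x\in\Omega_+$ avoided an open set $U$ with $U\cap\Omega\neq\emptyset$, then $U$ would contain two points of $S(\infty)$, so $h$ would be non-constant on $U$ and $h(U)$ would have nonempty interior, contradicting the density of the $T_0$-orbit of $h(x)$; your route can be made to work, but only after the one-sided limit argument above is supplied.
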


\begin{proof}
We first prove that there exists a decomposition of $\Omega$ in closed sets $\Omega = \Omega_+ \cup \Omega_-$ such that $T( \Omega_+) \subset \Omega_+$ and $T^{-1}(\Omega_-) \subset \Omega_-$. 

Let $S(\infty)_+$ be the forward orbits by $T$ of the discontinuity points of $T^{-1}$ and similarly $S(\infty)_-$ be the set of the backward orbits by $T$ of the discontinuity points of $T$. By definition of $S(\infty)$, $S(\infty) = S(\infty)_+ \cup S(\infty)_+$. Define $\Omega_{\pm}$ as the set of non-isolated points of $\overline{S(\infty)}_{\pm}$. These sets satisfy the claim.

Let $x$ be a point of $\Omega$. Up to considering its backward orbit, we assume that $x \in \Omega_+$. We want to prove that $(T^n(x))_{n\geqslant 0} $ is dense in $\Omega$. By contradiction, let $U$ be an open set such that $U \cap \Omega \neq \emptyset$ and $T^n(x) \notin U\cap \Omega$ for all $n$. Since $\Omega_+$ is stable  by the action of $T$, we can relax the last condition by $T^n(x) \notin U$ for all $n \geqslant 0$.

Since $U \cap \Omega \neq \emptyset$, $U\cap \Omega$ contains at least two different points of $S(\infty)$, therefore $h$ is not constant on $U$. Hence $h(U)$ has a non-empty interior. Finally, since the sequence $h\circ T^n(x) = T_0^n(h(x))$ avoids an open set and $T_0$ is minimal, we get a contradiction.
\end{proof}

In order to prove that $\Omega$ is an attractor for both $T$ and $T^{-1}$, we need the following three technical lemmas.

\begin{lemma}\label{lemma:h_constant_cc}
The function $h$ such that $h \circ T = T_0 \circ h$ is constant on the connected components of $\gamma \smallsetminus \overline{S(\infty)}$.
\end{lemma}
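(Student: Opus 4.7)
The plan is to argue by contradiction, leveraging two key facts already established in the excerpt: that $h$ restricts to an increasing bijection from $S(\infty)$ onto $S_0(\infty)$, and that the IET $T_0$ is minimal (because the vertical foliation of the pseudo-Anosov $\varphi$ has no closed leaf, so $T_0$ has no connection). Fix a connected component $I$ of $\gamma \smallsetminus \overline{S(\infty)}$ and write $I = (a,b)$. Since $h$ is continuous and (weakly) increasing, $h(I)$ is an interval and the claim is that this interval is degenerate, i.e.\ $h(a) = h(b)$.

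Suppose for contradiction that $h(a) < h(b)$. Then $h(I)$ contains the nonempty open interval $\bigl(h(a),h(b)\bigr)$. By minimality of $T_0$, every orbit under $T_0$ is dense in $\gamma_0$; in particular the orbit of any discontinuity of $T_0^{\pm 1}$ is dense, so $S_0(\infty)$ is dense in $\gamma_0$. I can therefore pick a point
\[
y \in \bigl(h(a),h(b)\bigr) \cap S_0(\infty).
\]
Pulling back, the fiber $h^{-1}(y)$ is a (closed) subinterval of $\gamma$, because $h$ is monotone and continuous. Since $y \in h(I)$, this fiber meets $I$ at some point $c \in (a,b)$.

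Now I use the bijectivity statement recalled at the start of the subsection: there is a unique $\tilde{x} \in S(\infty)$ with $h(\tilde{x}) = y$, and this $\tilde{x}$ necessarily lies in $h^{-1}(y)$. Because $I \cap \overline{S(\infty)} = \emptyset$, $\tilde{x}$ must lie outside $I$, hence either $\tilde{x} \le a$ or $\tilde{x} \ge b$. In either case, the interval $h^{-1}(y)$ contains both $\tilde{x}$ and $c \in (a,b)$, so it contains one of the endpoints $a$ or $b$. Consequently $h(a) = y$ or $h(b) = y$, contradicting $h(a) < y < h(b)$. This finishes the argument and shows $h$ is constant on $I$.

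The essential ingredients are the two preparatory facts (bijectivity of $h|_{S(\infty)}$ and minimality of $T_0$), both of which are already in hand; the only potential subtlety, which is not so much a genuine obstacle as a point to state carefully, is the appeal to minimality of $T_0$ to guarantee that $S_0(\infty)$ intersects every open subinterval of $\gamma_0$, and the use of monotonicity to ensure that the fiber $h^{-1}(y)$ really is an interval connecting $\tilde{x}$ to $c$ through one of the endpoints of $I$.
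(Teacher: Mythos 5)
Your argument is correct and rests on the same ingredients as the paper's proof: density of $S_0(\infty)$ in $\gamma_0$ (from minimality of $T_0$), the bijection $h|_{S(\infty)} \colon S(\infty)\to S_0(\infty)$, and monotonicity/continuity of $h$, all used in a proof by contradiction on a component where $h(j_-)<h(j_+)$. The only difference is cosmetic: the paper pulls back infinitely many points of $S_0(\infty)$ from the middle third of $[h(j_-),h(j_+)]$ to produce an accumulation point of $S(\infty)$ inside the component, whereas you pull back a single point and use convexity of the fiber $h^{-1}(y)$ to force $h(a)=y$ or $h(b)=y$ --- essentially the same mechanism.
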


\begin{proof}
By contradiction, let $]j_-,j_+[$ be a connected component of $\gamma \smallsetminus \overline{S(\infty)}$ on which $h$ is not constant. Therefore $h(j_-) < h(j_+)$. By density of $S_0(\infty)$ in $\gamma_0$, there exist infinitely many points of $S_0(\infty)$ in the middle third segment of $[h(j_-),h(j_+)]$. Since $h : S(\infty) \to S_0(\infty)$ is a bijection, the image by $h^{-1}$ of all these points of $S_0(\infty)$ is relatively compact in $]j_-,j_+[$. Hence, there exist accumulation points of $S(\infty)$ in $]j_-,j_+[$, which is a contradiction.
\end{proof}

\begin{lemma}\label{lemma:permuted_without_cycle}
The connected components of $\gamma \smallsetminus \overline{S(\infty)}$ are permuted without cycle by $T$.
\end{lemma}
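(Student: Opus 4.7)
The plan is to combine the semi-conjugacy $h \circ T = T_0 \circ h$ with Lemma \ref{lemma:h_constant_cc} and the minimality of $T_0$ (already observed in Section \ref{sect:construct_rect_decomp}, since the vertical foliation of a pseudo-Anosov map has no closed leaves). Any cycle of components under $T$ would transfer via $h$ into a periodic orbit of $T_0$, which is impossible for a minimal IET on a nontrivial interval.

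First I would verify that $T$ maps connected components of $\gamma \smallsetminus \overline{S(\infty)}$ to connected components. Let $I = (j_-, j_+)$ be such a component. Since the discontinuities of $T$ are contained in $S(\infty) \subset \overline{S(\infty)}$, they avoid $I$, and $T|_I$ is an increasing homeomorphism onto an open interval $T(I)$. The set $S(\infty)$ is $T$-invariant in a straightforward sense: $T$ sends the forward orbit of a discontinuity of $T^{-1}$ into itself, and a backward orbit point of a discontinuity of $T$ to either another such backward orbit point or, for the discontinuity itself, to a one-sided limit which is a discontinuity of $T^{-1}$ and hence in $S(\infty)$. A symmetric argument for $T^{-1}$ then yields $T(I) \cap S(\infty) = \emptyset$, and since $T(I)$ is open this gives $T(I) \cap \overline{S(\infty)} = \emptyset$. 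On the other hand, the one-sided limits $T(j_\pm^{\mp})$ lie in $\overline{S(\infty)}$ (either by continuity of $T$ at a non-discontinuity $j_\pm \in \overline{S(\infty)}$, or as the lateral limits at a discontinuity, which are endpoints of branch images and thus discontinuities of $T^{-1}$). Thus $T(I)$ is itself a connected component of $\gamma \smallsetminus \overline{S(\infty)}$; applying the same reasoning to $T^{-1}$ shows $I \mapsto T(I)$ is a bijection on the set of components.

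Finally, suppose for contradiction that $T^n(I) = I$ for some $n \geq 1$ and some component $I$. By Lemma \ref{lemma:h_constant_cc}, $h(I) = \{y_0\}$ for some $y_0 \in \gamma_0$. Iterating the semi-conjugacy gives $h \circ T^n = T_0^n \circ h$, so for any $x \in I$,
\[ T_0^n(y_0) = T_0^n(h(x)) = h(T^n(x)) = h(x) = y_0, \]
so $y_0$ is periodic under $T_0$. But the $T_0$-orbit of any point in a minimal transformation of $\gamma_0$ must be dense, while the orbit of a periodic point is finite --- contradiction. Hence no such $n$ exists, and the components are permuted without cycle. The only mild technicality lies in the first step, where one must carefully unpack the definition of $S(\infty)$ to handle the endpoints $j_\pm$ when they happen to be actual discontinuities of $T$; the second step is a direct transfer through $h$.
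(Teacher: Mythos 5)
Your proposal is correct and follows essentially the same route as the paper: continuity of $T$ and $T^{-1}$ on the components (so components are permuted), then, assuming a cycle, Lemma \ref{lemma:h_constant_cc} plus the semi-conjugacy $h \circ T = T_0 \circ h$ produces a periodic point of $T_0$, contradicting its minimality. Your first step merely spells out in more detail (including that $T(I)$ is a full component) what the paper dismisses as ``easy to see''.
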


\begin{proof} 
By construction of $S(\infty)$, $T$ and $T^{-1}$ are continuous on each connected componant of $\gamma \smallsetminus \overline{S(\infty)}$. If $J$ is a connected componant of $\gamma \smallsetminus \overline{S(\infty)}$, then it is easy to see that $T(J)$ is a subset of a connected componant of $\gamma \smallsetminus \overline{S(\infty)}$. The same argument applied with $T^{-1}$ proves that the connected componants are permuted by the action of $T$.

By contradiction, let $J$ be a connected component of $\gamma \smallsetminus \overline{S(\infty)}$ and $n>0$ be such that $T^nJ=J$. Therefore $h\circ T^n(J) = h(J) = \{x\}$ by the Lemma~\ref{lemma:h_constant_cc}. Now $h\circ T^n(J) = T_0^n (h(J))$. Therefore $x$ is a periodic point for $T_0$, which contradicts the minimality of $T_0$.
\end{proof}

\needspace{5em}
\begin{lemma}\label{lemma:isolated_are_wandering}
The isolated points of $\overline{S(\infty)}$ are wandering points.
\end{lemma}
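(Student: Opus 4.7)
The plan is to leverage the semi-conjugacy $h \circ T = T_0 \circ h$, together with the constancy of $h$ on complementary components of $\overline{S(\infty)}$ given by Lemma \ref{lemma:h_constant_cc}, to transfer the injectivity of a $T_0$-orbit back to $T$. Let $x$ be an isolated point of $\overline{S(\infty)}$: by definition there is an open interval $(a,b) \subset \gamma$ containing $x$ with $(a,b)\cap\overline{S(\infty)} = \{x\}$, so that $(a,x)$ and $(x,b)$ each lie in a connected component of $\gamma \smallsetminus \overline{S(\infty)}$.

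The first step is to observe that $h$ is constant on all of $(a,b)$, with value $y := h(x)$. By Lemma \ref{lemma:h_constant_cc}, $h$ is already constant on each of $(a,x)$ and $(x,b)$; since $h$ is a continuous, increasing and surjective map (as produced by Yoccoz's semi-conjugacy criterion), taking limits at $x$ from both sides forces both of these constants to coincide with $h(x)$.

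Iterating $h \circ T = T_0 \circ h$ then yields $h(T^n((a,b))) = \{T_0^n(y)\}$, so that $T^n((a,b)) \subset h^{-1}(T_0^n(y))$ for every $n \in \mathbb{Z}$. Since $T_0$ has no connection it is minimal on $\gamma_0$, in particular it has no periodic orbit, so the points $T_0^n(y)$ are pairwise distinct as $n$ ranges over $\mathbb{Z}$. Their preimages under the monotone map $h$ are pairwise disjoint, which in turn forces the iterates $T^n((a,b))$ themselves to be pairwise disjoint. In particular $T^n((a,b)) \cap (a,b) = \emptyset$ for all $n \geqslant 1$, so $x$ is a wandering point of $T$ with wandering neighborhood $(a,b)$.

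The only genuinely delicate point in this sketch is the step forcing $h$ to take the same value on $(a,x)$ and $(x,b)$, since \emph{a priori} a monotone function only has one-sided limits at an interior point. I expect this to be immediate thanks to the continuity of the semi-conjugacy $h$ given by \cite[Proposition 7]{yoccoz2005echanges}, but it is the one step that really relies on this external input rather than on the structural lemmas already established in the preceding subsection.
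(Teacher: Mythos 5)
Your proof is correct, but it follows a slightly different route than the paper's. The paper splits the punctured neighbourhood $U \smallsetminus \{x\}$ into the two adjacent connected components of $\gamma \smallsetminus \overline{S(\infty)}$ and invokes Lemma \ref{lemma:permuted_without_cycle} (components permuted without cycle) to see these are wandering sets, then separately rules out returns of $x$ itself: $T^n(x) \in U$ would force $T^n(x)=x$, making $h(x)$ a periodic point of the minimal map $T_0$. You instead bypass Lemma \ref{lemma:permuted_without_cycle} altogether: using Lemma \ref{lemma:h_constant_cc} on the two lateral components and the continuity of the semi-conjugacy $h$ at $x$ (which is indeed part of the conclusion of \cite[Proposition 7]{yoccoz2005echanges}, so the step you flag as delicate is fine), you collapse the whole interval $(a,b)$ to a single $h$-fibre, push it forward through $h \circ T^n = T_0^n \circ h$, and use that the minimal IET $T_0$ has no periodic orbit, so the fibres $h^{-1}(T_0^n(y))$ are pairwise disjoint (monotonicity of $h$ is not even needed here — preimages of distinct points are automatically disjoint). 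This buys you a genuinely stronger and cleaner conclusion: $T^n((a,b)) \cap (a,b) = \emptyset$ for \emph{all} $n \geqslant 1$, i.e.\ an honest wandering neighbourhood, whereas the paper's argument only yields finitely many returns of the neighbourhood and implicitly relies on the same no-periodic-orbit mechanism through Lemma \ref{lemma:permuted_without_cycle}. The only cosmetic caveat is the boundary case where $x$ is an endpoint of $\gamma$, where one of $(a,x)$, $(x,b)$ is empty; the argument goes through verbatim with a one-sided neighbourhood.
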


\begin{proof}
Let $x$ be an isolated point of $\overline{S(\infty)}$. Therefore there exists an open set $U$ such that $U \cap \overline{S(\infty)} = \{ x \}$. Hence $U \smallsetminus \{x\}= U_1 \sqcup U_2$ is included in the union of two connected components of $\gamma \smallsetminus \overline{S(\infty)}$, which are wandering sets by Lemma~\ref{lemma:permuted_without_cycle}. Therefore, $T^n(U \smallsetminus \{ x\}) \cap U \neq \emptyset$ for only finitely many values of $n$. Now, if $T^n(x) \in U$ then $T^n(x) = x$ and therefore $h(x)$ is a periodic point of $T_0$ which is impossible. Finally, we proved that $T^n U \cap U \neq \emptyset$ for only finitely many values of $n$, in other words $x$ is a wandering point.
\end{proof}

\begin{theorem}\label{thm:omega_attractor}
For every point $x \in \gamma$ whose forward orbit is infinite, the $\omega$-limit set of $x$ satisfies $\omega(x) = \Omega$. The counterpart is true for infinite backward orbits and $\alpha$-limit sets. In other words, $\Omega$ is an attractor for the transformations $T$ and $T^{-1}$. Furthermore, $\Omega$ coincides with the non-wandering set $\Omega(T)$ of $T$.
\end{theorem}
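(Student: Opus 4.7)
The plan is to establish both inclusions $\omega(x) \subset \Omega$ and $\omega(x) \supset \Omega$ for every $x \in \gamma$ with infinite forward orbit, and then deduce the statement for $\alpha$-limit sets by applying the same argument to $T^{-1}$.

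For $\omega(x) \subset \Omega$, I would argue by contradiction. Suppose $y \in \omega(x) \setminus \Omega$: then either $y$ lies in some connected component $J$ of $\gamma \setminus \overline{S(\infty)}$ (so $J$ itself is a wandering open set for $T$ with $T^n J \cap J = \emptyset$ for $n \neq 0$, by Lemma \ref{lemma:permuted_without_cycle}), or $y$ is an isolated point of $\overline{S(\infty)}$, in which case Lemma \ref{lemma:isolated_are_wandering} provides an open neighborhood $U$ of $y$ with $T^n U \cap U = \emptyset$ for all $|n|$ exceeding some threshold $N$. Since $T^{n_k}(x) \to y$, for $k$ large enough the iterate $T^{n_k}(x)$ lies in this wandering neighborhood; choosing indices $k < k'$ so that both iterates are inside and $n_{k'} - n_k > N$, the identity $T^{n_{k'}}(x) = T^{n_{k'} - n_k}(T^{n_k}(x))$ places $T^{n_{k'}}(x)$ in $T^{n_{k'} - n_k}(U) \cap U$, contradicting the wandering property.

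For $\omega(x) \supset \Omega$, I would fix $y \in \Omega$ and an arbitrary open interval neighborhood $V \subset \gamma$ of $y$, and produce infinitely many $n$ with $T^n(x) \in V$. Since $y \in \Omega$, it is non-isolated in $\overline{S(\infty)}$ from at least one side, say from the left (the other case is symmetric). By density of $S(\infty)$ in $\overline{S(\infty)}$ one can pick $\sigma \in S(\infty) \cap V$ with $\sigma < y$. Inserting a third point of $S(\infty)$ strictly between $\sigma$ and $y$ (which exists by left-accumulation of $\overline{S(\infty)}$ at $y$) and using that $h|_{S(\infty)}$ is an increasing bijection onto $S_0(\infty)$, one obtains $h(\sigma) < h(y)$ strictly. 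Because $h$ is continuous, weakly increasing, and constant on each connected component of $\gamma \setminus \overline{S(\infty)}$ by Lemma \ref{lemma:h_constant_cc}, the preimage $h^{-1}((h(\sigma),h(y)))$ is an open interval of the form $(\sigma',y)$ with $\sigma \leq \sigma' < y$, and hence contained in $V$. Minimality of $T_0$ on $\gamma_0$ then yields infinitely many $n$ with $T_0^n(h(x)) \in (h(\sigma),h(y))$, and the semi-conjugacy $h \circ T = T_0 \circ h$ transfers this to $T^n(x) \in (\sigma',y) \subset V$, proving $y \in \omega(x)$.

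The $\alpha$-limit statement is obtained by repeating the argument with $T$ replaced by $T^{-1}$, which is semi-conjugated by the same $h$ to the still-minimal IET $T_0^{-1}$, and for which the wandering components and isolated points of $\overline{S(\infty)}$ coincide with those for $T$. The main obstacle in the second inclusion is the plateau case, when $h^{-1}(h(y))$ is a non-degenerate interval: through the semi-conjugacy the $T_0$-minimality only pins down the $h$-value of the accumulation point, which a priori could select the opposite endpoint of the plateau from $y$. The key is to break this ambiguity by choosing $\sigma$ on the side of $y$ along which $\overline{S(\infty)}$ actually accumulates; this is possible precisely because $y \in \Omega$, and it forces $h^{-1}((h(\sigma),h(y)))$ to sit entirely on that same side of $y$, locking the approach of $T^n(x)$ onto $y$ itself.
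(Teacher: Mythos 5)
Your proof is correct and takes essentially the same route as the paper: $\omega(x)\subset\Omega$ via the wandering components of $\gamma\smallsetminus\overline{S(\infty)}$ (Lemma \ref{lemma:permuted_without_cycle}) and the wandering isolated points (Lemma \ref{lemma:isolated_are_wandering}), and $\Omega\subset\omega(x)$ via the semi-conjugacy $h$, its injectivity on $S(\infty)$, and the minimality of $T_0$. Your one-sided choice of $\sigma$ and the explicit description of $h^{-1}\left((h(\sigma),h(y))\right)$ merely spell out the monotonicity step that the paper leaves implicit when it asserts that the orbit of $h(x)$ must avoid the interior of $h(U)$, so the extra care about plateaus of $h$ is a welcome but not essentially different refinement.
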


\begin{proof}
We prove both inclusions. We start by showing that $\Omega \subset \omega(x)$. By contradiction, let $y \in \Omega$ such that $y \notin \omega(x)$. Since $\omega(x)$ is a closed set, there exists an open set $U$ containing $y$ such that $U \cap \Omega \neq \emptyset$ and $U \cap \omega(x) = \emptyset$. Therefore $T^n(x) \notin U$ for large enough $n$. Since $U \cap \Omega \neq \emptyset$, $U$ contains at least two distinct points of $S(\infty)$. Since $h$ is one-to-one on $S(\infty)$ and continuous on $\gamma$, the set $h(U)$ has a non-empty interior. Therefore the sequence $T_0^n(h(x)) = h\circ T^n(x)$ is dense in $\gamma_0$ (by minimality of $T_0$) and avoids the set of non-empty interior $h(U)$, hence a contradiction.

We now prove that $\Omega^c \subset \omega(x)^c$. Let $y$ be in $\Omega^c$. There are two cases. If $y \in \gamma \smallsetminus \overline{S(\infty)}$, then by Lemma~\ref{lemma:permuted_without_cycle} $y$ is contained in a wandering interval: $y$ cannot be obtain as a limit point of an orbit by $T$, hence $y \notin \omega(x)$. Otherwise, $y$ is an isolated point of $\overline{S(\infty)}$. By contradiction, $y \in \omega(x)$ implies that $y$ is a non-wandering point, which contradicts Lemma~\ref{lemma:isolated_are_wandering}. Hence $\omega(x)=\Omega$.

We now prove that $\Omega=\Omega(T)$. By minimality of $T$ when restricted to $\Omega$, we get $\Omega \subset \Omega(T)$. Since $T$ permutes the connected components of $\gamma \smallsetminus \overline{S(\infty)}$, all points of $\gamma \smallsetminus \overline{S(\infty)}$ are wandering points. Therefore $\Omega(T) \subset \overline{S(\infty)}$. Finally, by the Lemma~\ref{lemma:isolated_are_wandering} we can refined this last inclusion by $\Omega(T) \subset \Omega$.
\end{proof}

\begin{proposition}\label{prop:Omega_K_cap_gamma}
The sets $\Omega$ and $K$ are related by $\Omega = \gamma \cap K$.
\end{proposition}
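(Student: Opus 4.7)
The strategy is to identify $K \cap \gamma$ as a closed, non-empty, $T$-invariant subset of the minimal set $\Omega$, and then invoke the minimality of $T|_\Omega$ (established in the preceding theorem) to conclude $K \cap \gamma = \Omega$. The crucial step is to show that no wandering interval of $T$ meets $K$.

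I would start by picking a hyperbolic fixed point $p = p_1^\sigma$. Since $K = \overline{W^{ss}(p)} = \overline{h_{\mathbbm{R}}(p)}$ (by Theorem \ref{thm:K_connected} and Proposition \ref{prop:stable_leaf_is_flow_trajectory}) and $h_t$ is uniformly transverse to $\gamma$ (the horizontal component of $v^s$ is identically $1$ while $\gamma$ is vertical), the intersections $h_{\mathbbm{R}}(p) \cap \gamma$ are dense in $K \cap \gamma$: any $x \in K \cap \gamma$ is approached by points $h_{s_n}(p)$ which can each be flowed back to $\gamma$ in small time, producing nearby elements of $h_{\mathbbm{R}}(p) \cap \gamma$. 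Since the rectangle decomposition $\mathcal{R}_\beta$ covers $S_g$, the orbit $h_{\mathbbm{R}}(p)$ does meet $\gamma$; fix any $q \in h_{\mathbbm{R}}(p) \cap \gamma$, so its $T$-orbit $\{T^n(q)\}_n$ coincides with $h_{\mathbbm{R}}(p) \cap \gamma$ and is dense in $K \cap \gamma$, with all iterates lying in $K$.

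Assume for contradiction that a wandering interval $J$ meets $K$ at some point $x$. By density and openness of $J$, the set of integers $n$ with $T^n(q) \in J$ is infinite. If $q \in \overline{S(\infty)}$, then all iterates $T^n(q)$ remain in $\overline{S(\infty)}$ (using that the discontinuities of $T$ lie in $S(\infty)$, so $T$ is continuous off $S(\infty)$, and that $q \in K$ is disjoint from these discontinuities), contradicting $\overline{S(\infty)} \cap J = \emptyset$. Otherwise $q$ lies in some wandering interval $J_0$, and two distinct integers $n_1, n_2$ with $T^{n_i}(q) \in J$ force $T^{n_2 - n_1}(J) \cap J \neq \emptyset$; since wandering intervals are connected components of $\gamma \setminus \overline{S(\infty)}$, this gives $T^{n_2 - n_1}(J) = J$, contradicting the ``permuted without cycle'' conclusion of Lemma \ref{lemma:permuted_without_cycle}.

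It follows that $K \cap \gamma \subset \overline{S(\infty)}$. Moreover, because $h_{\mathbbm{R}}(p)$ is an embedded curve, the iterates $T^n(q)$ are pairwise distinct, so the density argument shows every $x \in K \cap \gamma$ is a non-isolated point of $\overline{S(\infty)}$, hence $x \in \Omega$. Finally, $K \cap \gamma$ is closed in $\gamma$, non-empty, and $T$-invariant: the discontinuities of $T$ and $T^{-1}$ all lie in $U_\Sigma$ (their $h_t$-orbits reach $\Sigma$ in finite time and $U_\Sigma$ is $h_t$-invariant in both time directions), hence disjoint from $K$, so $T$ is defined everywhere on $K \cap \gamma$; and $T(K \cap \gamma) \subset K \cap \gamma$ by $h_t$-invariance of $K$. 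Minimality of $T|_\Omega$ then forces $K \cap \gamma = \Omega$. I expect the most delicate point to be showing that wandering intervals avoid $K$, as it requires simultaneously exploiting the density of $h_{\mathbbm{R}}(p)$ in $K$, the transversality of $h_t$ to $\gamma$, and the no-cycle structure of the wandering intervals.
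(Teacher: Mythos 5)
Your strategy is genuinely different from the paper's: you never use Theorem \ref{thm:omega_attractor}, but instead try to prove the inclusion $K\cap\gamma\subset\Omega$ directly (no wandering interval meets $K$, no point of $K\cap\gamma$ is isolated in $\overline{S(\infty)}$) and then conclude by minimality of $T|_{\Omega}$, whereas the paper obtains both inclusions at once from the density of $(T^n(p))_n$ in $\gamma\cap K$ together with $\omega_T(p)=\Omega$. Most of your intermediate steps are fine and available at this stage: transversality of $v^s$ to $\gamma$, $K=\overline{W^{ss}(p)}=\overline{h_{\mathbbm{R}}(p)}$ (Theorem \ref{thm:K_connected} and Proposition \ref{prop:stable_leaf_is_flow_trajectory}), the fact that the discontinuities of $T^{\pm 1}$ lie in $U_{\Sigma}$ and hence off $K$, the no-cycle Lemma \ref{lemma:permuted_without_cycle}, and the closing minimality argument.

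There is, however, one step whose stated justification fails: the claim that the set of $n$ with $T^n(q)\in J$ is infinite, and later that every $x\in K\cap\gamma$ is non-isolated in $\overline{S(\infty)}$ ``because the iterates $T^n(q)$ are pairwise distinct''. Density of the orbit in $K\cap\gamma$ plus pairwise distinctness yields infinitely many distinct orbit points near $x$ only when $x$ is \emph{not} itself an orbit point; if $x=T^m(q)$, density at $x$ is witnessed by $x$ alone. Worse, in your Case B the no-cycle lemma shows the orbit visits each wandering interval at most once, so the scenario you must exclude is precisely ``$T^m(q)$ lies in a wandering interval $J$ with $J\cap K=\{T^m(q)\}$ isolated in $K\cap\gamma$'', and nothing in your text rules it out. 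The gap is fillable with what is already proved: by Theorem \ref{thm:K_connected}, $K=\overline{W^{ss}(p')}$ also for a second fixed point $p'$ whose flow orbit is disjoint from that of $p$; intersecting with $\gamma$ gives a second $T$-orbit, disjoint from $\{T^n(q)\}_n$ and likewise dense in $K\cap\gamma$, and a compact set containing two disjoint dense subsets has no isolated points. With that observation both your ``infinitely many visits'' claim and your non-isolation claim become correct, and the rest of your argument goes through. (Note that the paper's own two-line proof via Theorem \ref{thm:omega_attractor} passes silently over the very same subtlety for the orbit points themselves, so the repair above is useful for either route.)
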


\begin{proof}
Let $p = p_i^{\sigma}$ be in $\gamma \cap K$. We know that $h_{\mathbbm{R}}(p)$ is dense in $K$, therefore $(T^n(p))_n$ is dense in $\gamma \cap K$. However, $\omega_T(x)=\Omega$ for all $x \in \gamma$, in particular for $x=p$. Hence $\Omega = \gamma \cap K$.
\end{proof}

\begin{corollary}\label{corol:ht_minimal}
When restricted to $K$, the flow $h_t$ is minimal. Furthermore, the set $K$ is an attractor for the flow $h_t$, for positive and negative times.
\end{corollary}

\begin{proof}
Let $u: \gamma \to \mathbbm{R}$ be the function giving the first return time in $\gamma$. This function is bounded by some constant $C$. Clearly, we have the equality $h_{\mathbbm{R}}(\Omega)=h_{[0,C]}(\Omega)$ and the left hand side is a closed set containing the orbit of $p = p_i^{\sigma} \in \gamma \cap K = \Omega$, hence $h_{[0,C]}(\Omega)=K$. This last equality proves the minimality of $(h_t)_t$ when restricted to $K$.

From $h_{[0,C]}(\Omega)=K$ and Theorem~\ref{thm:omega_attractor}, we obtain that every infinite forward trajectory of $h_t$ accumulates on $K$. Similarly, every infinite backward trajectory of $h_t$ accumulates on $K$.
\end{proof}

\subsection{Proof of the unique ergodicity of $h_t$}\label{sect:unique_erg_flow}

\begin{lemma}\label{lemma:factorization_measure}
In the coordinates of the suspension, every $h_t$-invariant measure $\mu$ must be of the form $\diff \mu(x,t) = C \diff\nu(x) \diff Leb(t)$, for $x \in \gamma$, $0 \leqslant t < u(x)$, some constant $C>0$ and some measure $\nu$ on $\gamma$, where $u(x)$ is the time of first return to $\gamma$ of $x$ and $Leb$ is the Lebesgue measure.
\end{lemma}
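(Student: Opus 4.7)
The plan is to exploit the suspension coordinates directly: the flow acts as pure vertical translation in $(x,t)$, so a disintegration along the base $\gamma$ forces the conditional measures to be translation-invariant on each fiber, hence Lebesgue up to a multiplicative factor. First I would set up notation: using the standard identification, write $M := \{(x,t) : x \in \gamma,\ 0 \leqslant t < u(x)\}$ with $(x,u(x))$ glued to $(T(x),0)$, and recall that the first return time $u$ is bounded below and above by strictly positive constants on $\gamma$ (this follows because the horizontal component of $v^s$ equals $1$ on all of $S_g \smallsetminus \Sigma$, together with the compactness of $\gamma$ and transversality of $v^s$ to $\gamma$). Fix $\tau_0 \in \,]0,\inf_\gamma u[$.

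Next I would transport the translation invariance to a statement about $\mu$: for any Borel set $A \subset \gamma$ and any $0 < s < \tau_0$ and $0 < \tau < \tau_0 - s$, the flow box $A \times [0,s]$ is mapped homeomorphically by $h_\tau$ to $A \times [\tau, s+\tau]$. Setting $F_A(s) := \mu(A \times [0,s])$, the $h_\tau$-invariance of $\mu$ gives
\begin{equation*}
F_A(s+\tau) - F_A(\tau) = \mu(A \times [\tau,s+\tau]) = \mu(A \times [0,s]) = F_A(s),
\end{equation*}
so the right-continuous, non-decreasing function $F_A$ has translation-invariant increments on $[0,\tau_0)$, hence is linear: $F_A(s) = c_A \cdot s$ for some $c_A \geqslant 0$. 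Countable additivity of $\mu$ transfers to $A \mapsto c_A$, so this defines a finite Borel measure $\nu$ on $\gamma$ with $\nu(A) := c_A$, and on the low slab $\gamma \times [0,\tau_0)$ we already have $d\mu = d\nu(x)\,dLeb(t)$ by a standard monotone class argument.

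Finally I would propagate this product structure over the whole suspension. Any point $(x,t) \in M$ lies in a small flow box which is the image under some $h_{-\sigma}$ (with $\sigma \in \mathbb{R}$, applied possibly across several identifications $(y,u(y)) \sim (T(y),0)$) of a box inside $\gamma \times [0,\tau_0)$. The flow invariance of $\mu$ then forces the same product formula on this box, where the ``$x$-factor'' is transported by iterates of $T^{\pm 1}$ from the base. Consistency of these transports across the gluing is automatic because $\mu$ is already a single well-defined measure on $M$; equivalently, one verifies $a\ posteriori$ that $\nu$ is $T$-invariant, which is forced by the gluing and the uniqueness of the product decomposition. The resulting global identity $d\mu(x,t) = d\nu(x)\,dLeb(t)$ yields the claim (the constant $C$ simply permits one to normalize $\nu$, e.g.\ to a probability measure, independently of $\mu$). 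The only mildly delicate point is the extension step: one must verify that the locally defined product structure glues coherently across the roof, which reduces to the translation-invariance argument applied to flow boxes straddling the roof and the fact that $T$ preserves the Borel structure of $\gamma$.
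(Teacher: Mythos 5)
Your proof is correct, and it rests on the same mechanism as the paper's: in suspension coordinates $h_t$ is unit-speed vertical translation, and invariance in the $t$-direction forces the Lebesgue factor on the fibers. The implementations differ, though. The paper lifts $\mu$ to the cover $\gamma \times \mathbb{R}$, where the lifted flow is translation in the second coordinate, and simply quotes the product structure $C\,\nu \otimes \mathrm{Leb}$ of translation-invariant measures before projecting back; you stay in the fundamental domain, prove the product form on a low slab by showing the monotone functions $F_A$ satisfy Cauchy's equation (hence are linear), and then propagate by flow invariance. Your route is more elementary and self-contained, since it proves rather than invokes the decomposition of translation-invariant measures, at the cost of the extension step you flag as delicate. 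In fact that step is simpler than you make it: because the lemma only asserts the formula on the fundamental domain $\{0 \leqslant t < u(x)\}$, it suffices to treat boxes $A \times [t_1,t_2]$ with $A \subset \{u > t_2\}$, subdivide so the height is less than $\tau_0$, and flow backward by $t_1$ within the column; no crossing of the roof and no transport of the base measure by $T^{\pm 1}$ is needed, and the $T$-invariance of $\nu$ is not part of this lemma (the paper derives it afterwards, via Fubini on small rectangles, in the proof of unique ergodicity). Two minor precision points: $F_A(s+\tau)-F_A(\tau)$ is the measure of a half-open box, so either work with half-open boxes throughout or observe that horizontal slices $A \times \{\tau\}$ are $\mu$-null (uncountably many disjoint vertical translates of equal measure would otherwise contradict finiteness of $\mu$); and, as you use implicitly, it is monotonicity (not mere measurability) of $F_A$ that upgrades the Cauchy equation to linearity.
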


\begin{proof}
Let $\tilde{\pi} : \gamma \times \mathbbm{R} \to \mathcal{R}$ be a covering map. The lift of $h_t$ is simply the unit speed translation flow along the second coordinate. Let $\mu$ be an invariant measure for this flow. Let $\tilde{\mu}$ be a lift of $\mu$ to $\gamma \times \mathbbm{R}$. Therefore $\tilde{\mu}$ is invariant by translation along the second coordinate. Hence $\tilde{\mu} = C \nu \otimes Leb$, where $Leb$ is the Lebesgue measure and $\nu(S) \coloneqq \tilde{\mu}(S \times [0,\varepsilon])$ is a measure on $\gamma$, for some $\varepsilon >0$. Taking back the projection by $\tilde{\pi}$, we get $\diff \mu(x,t) = C \diff\nu(x) \diff Leb(t)$, as long as $\varepsilon < \inf_x u(x)$.
\end{proof}

We can now prove the unique ergodicity of $h_t$.

\begin{proof}[Proof of Theorem~\ref{thm:unique_ergodicité_(h_t)}]
Let $\mu$ be a measure invariant by the flow $h_t$. By Lemma~\ref{lemma:factorization_measure}, we can find a constant $C$ and a measure $\nu$ on $\gamma$ such that $\diff \mu(x,t) = C \diff\nu(x) \diff Leb(t)$. By applying Fubini's theorem on sufficiently small rectangles, we obtain that $\nu$ is invariant by $T$.

Since the vertical foliation associated to a pseudo-Anosov map is uniquely ergodic -- see \cite[Expos\'e~12]{fathi1979travaux, FLP_english} -- it follows that $T_0$ is uniquely ergodic. 

Now, $T$ and $T_0$ have the same path in the Rauzy-graph. By \cite{yoccoz2005echanges}, $T$ is semi-conjugated to $T_0$ by some continuous monotonic function $h$. This function $h$ is bijective when restricted, up to a countable set of points, to the set of non-wandering points of $T$. Therefore $T$ is also uniquely ergodic, of invariant measure $\nu$.
 
Hence $h_t$ is uniquely ergodic, of invariant measure $\mu$.

We now prove that the support of $\mu$ is $K$. First, since $supp(\nu)$ is included in the set of non-wandering points of $T$, which is $\Omega$, and $supp(\nu)$ is a closed set invariant by $T$, by minimality of $T$ we get that $supp(\nu) = \Omega$. Now, by the factorization of $\mu$ and the fact that $h_{\mathbbm{R}}(\Omega)=K$, we get $supp(\mu)=K$.
\end{proof}

We give in Figure~\ref{fig:oct_pert_center} a graphical representation of the set $K$ in the case of the fully explit example outlined in the description of Figure~\ref{fig:zip_rect_octagon}. More precisely, Figures~\ref{fig:zip_rect_octagon}, \ref{fig:IET_and_GIET_associated} and \ref{fig:oct_pert_center} are obtained by integrating numerically an approximation of the vector field $v^s$ associated to the pseudo-Anosov map constructed in the appendix of \cite{sinai2005weak}. The approximation of $v^s$ is obtained by truncating the sum \eqref{eq:definition_v^s} defining $v^s$. The integration is done with a forth order Runge--Kutta method. The GIET in Figure~\ref{fig:IET_and_GIET_associated} is obtained as the first return map of to $\gamma$ of \emph{many} initial conditions in $\gamma$ of the numerically estimated solutions. For Figure~\ref{fig:oct_pert_center}, we plot a numerically estimated solution with some initial data for $t \in [T/2,T]$, for some large $T>0$ (the initial point is not so important because of Corollary~\ref{corol:ht_minimal}).

As a final remark for this section, we can perform a similar analysis by perturbing a pseudo-Anosov only at some conical points $\Sigma_0 \subsetneq \Sigma$. The proofs are mostly the same by replacing $\Sigma$ by $\Sigma_0$ and using arguments from the following section in order to deal with the points $\Sigma \smallsetminus \Sigma_0 \subset K$. 

\section{Perturbation at a regular periodic point}\label{sect:perturbation_regular_point}

Because of the following general property concerning pseudo-Anosov maps -- see for example \cite{farb2011primer} -- we can consider periodic points that are not conical points -- they are regular points.
\begin{proposition}
If $\varphi : S_g \to S_g$ is pseudo-Anosov, then the set of periodic points of $\varphi$ is a dense subset of $S_g$.
\end{proposition}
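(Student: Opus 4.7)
The plan is to combine the hyperbolic structure of $\varphi$ with topological mixing via the classical Markov-crossing argument. Since $\Sigma$ is finite and $\varphi$-invariant, every conical point is periodic (some iterate of $\varphi$ fixes it), so it suffices to approximate an arbitrary regular point $x \in S_g \smallsetminus \Sigma$ and a neighborhood $U$ of $x$ by a periodic orbit in $U$.

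First I would choose a small open rectangle $R \subset U$ centered at $x$, with sides along the stable and unstable leaves of $\varphi$ in the flat coordinates of $\omega$, taken uniformly away from $\Sigma$. Because $\varphi$ acts in those coordinates by the diagonal hyperbolic matrix of stretch $\lambda > 1$ and because the unstable foliation is minimal (every leaf is dense, see \cite{farb2011primer,fathi1979travaux}), for some large $n$ the image $\varphi^n(R)$ is a long thin strip stretched along unstable leaves which \emph{Markov-crosses} $R$: there is a sub-rectangle $R' \subset R$ with $\varphi^n(R') \subset R$, the stable sides of $R'$ mapping strictly into the interior of $R$, and the unstable sides of $\varphi^n(R')$ coinciding with the unstable sides of $R$.

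Then, by the standard hyperbolic fixed-point argument for such crossings, the nested intersection $\bigcap_{k \geqslant 0} \varphi^{-kn}(R)$ collapses along unstable leaves to a single unstable segment $W^u \subset R$, and $\bigcap_{k \geqslant 0} \varphi^{kn}(R')$ collapses along stable leaves to a single stable segment $W^s \subset R$; the local product structure of the two transverse foliations gives $W^u \cap W^s = \{y\}$, and $y$ is fixed by $\varphi^n$, hence periodic for $\varphi$. Since $y \in R \subset U$, this produces a periodic point arbitrarily close to $x$.

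The main obstacle is justifying the Markov-crossing step when the rectangle $R$ approaches $\Sigma$, where the foliations degenerate; this is bypassed by keeping $R$ strictly away from $\Sigma$, which is possible since $x$ is regular. As a cleaner alternative, one may invoke the existence of a Markov partition for a pseudo-Anosov map (see \cite{fathi1979travaux}) to semi-conjugate $\varphi$ to a topologically mixing subshift of finite type, in which density of periodic points is standard, and then push this density down to $S_g$ by continuity of the coding map.
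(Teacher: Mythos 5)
Your argument is correct in substance, but note that the paper does not prove this proposition at all: it is quoted as a known general property of pseudo-Anosov maps with a pointer to \cite{farb2011primer}, so any proof you give is necessarily ``different'' from the paper's treatment. What you wrote is essentially the classical argument that the cited reference formalizes: conical points are periodic because $\Sigma$ is finite and invariant; for a regular point one takes a small foliation-adapted rectangle $R$ away from $\Sigma$, uses minimality (in fact uniform density of sufficiently long unstable segments, which follows from minimality plus compactness and is the same fact the paper itself invokes via \cite[Corollary 14.15]{farb2011primer}) to get a full unstable crossing of $R$ by $\varphi^n(R)$ for large $n$, and then the standard horseshoe/Anosov-closing fixed-point argument produces a point of period $n$ inside $R$; your fallback via a Markov partition and a mixing subshift is equally valid, since a continuous surjective semi-conjugacy sends the dense set of periodic codes to a dense set of periodic points. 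Two small bookkeeping remarks: your description of the crossing has the roles of the sides interchanged (for a full unstable crossing, the short \emph{stable} ends of the strip $\varphi^n(R')$ land on the stable sides of $R$, while its long unstable sides lie strictly inside $R$ in the stable direction, not the other way around), and the nested-intersection step should be phrased for the fixed crossing component (intersect iterates of that component, not of all of $R$); neither affects the validity of the proof, and the crossing itself is legitimate because $R$ contains no singularity, so an unstable segment entering the middle of $R$ is a straight chord in the flat coordinates and must traverse $R$ completely.
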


Let $\theta \in S_g \smallsetminus \Sigma$ be a periodic point of $\varphi$ that is not a conical point. Up to considering a power of $\varphi$, we assume that $\theta$ is a fixed point.

In this part, we present that a very similar analysis can be done when a pseudo-Anosov map is perturbed at a fixed point that is regular instead of conical.

\subsection{Definition of the perturbation}

We can proceed to the same type of perturbation as described in Section~\ref{sect:first_def_perturb} at a regular fixed point $\theta$, except that it is much easier to define since $\theta$ is not a conical point and we do not have to deal with branched cover.

Write $\varphi(x+iy) = \lambda x + i \lambda^{-1} y$ in some local chart centred at $\theta$. In these coordinates, define \[ 
f(x+iy) \coloneqq \left( \lambda + \beta k \left( \frac{\sqrt{x^2+y^2}}{\alpha} \right) \right) x + i \lambda^{-1}y ,
\]
for some $\beta \in ]-\lambda,0]$ (we are interested in the case $\beta \in ]-\lambda,-\lambda+1[$), $0 < \alpha < \delta_\theta$, where $\delta_\theta \coloneqq \min \left( \tfrac{1}{2} Syst(S_g), \inf\{ \D(\theta, \sigma) \mid \sigma \in \Sigma \} \right)$ and $k : \mathbbm{R} \to \mathbbm{R}$ is an even unimodal function of class $\mathcal{C}^1$, compactly supported in $[-1,1]$ such that $k'$ is Lipschitz continuous, for example $k(r) = (1 - r^2)^2 \mathbbm{1}_{[-1,1]}(r)$. Set $f=\varphi$ elsewhere. Actually, this perturbation corresponds to a ``fake" conical point $\sigma = \theta$, that is where $n_\sigma =1$ (in Section~\ref{sect:construction_and_attractor} we assumed that $n_\sigma >1$ for all $\sigma \in \Sigma$).

\subsection{Differences of this case}

With the change on the range of the parameter $\alpha$, analogues of Propositions~\ref{prop:f_diffeo} to \ref{prop:K_as_Finite_Union} hold where $\sigma \in \Sigma$ is replaced by $\theta$. The proof for these results are formally the same.

That is, for $\beta \in \, ] -\lambda, 0]$ and $\alpha \in \, ]0, \delta_\theta [$, $f$ is a homeomorphism on $S_g$ and a $\mathcal{C}^1$ diffeomorphism on $S_g \smallsetminus \Sigma$. Restricting further the range of $\beta$ to be $] -\lambda, 1 - \lambda[$, $\theta$ is an attractive fixed point of $f$. Call $U_\theta$ its basin of attraction (which is open), and define $K = S_g \smallsetminus U_\theta$. In this case, we get that on the horizontal leaf containing $\theta$, there are two hyperbolic fixed points, $p_1$ and $p_2$, at the same distance $|p|>0$ from $\theta$. Furthermore $B(\theta, |p|) \subset U_\theta$. Also, $U_\theta$ is dense in $S_g$, its accessible border is $W^{ss}(p_1)\cup W^{ss}(p_2)$ and the compact set $K$ is equal to $\overline{W^{ss}(p_1)} \cup \overline{W^{ss}(p_2)}$.

The first difference with the case treated in the previous sections is that here $\Sigma \subset K$. In particular, $K$ can no longer be a hyperbolic set since $f$ is not differentiable on points of $\Sigma$. Nonetheless, one can still construct a vector field $v^s$ on $K \smallsetminus \Sigma$ such that $\mathrm{d}_x f (v^s(x)) = \lambda^{-1} v^s(f(x))$, $x \in K \smallsetminus \Sigma$, as in Theorem~\ref{thm:K_is_hyperbolic} (in particular, strictly speaking, $f$ is not Axiom A since $K \smallsetminus \Sigma$ is not a compact set). Results from the first half of Section~\ref{sect:technicalities} do not change, as well as there proofs. More precisely, $v^s$ can be extended into a Lipschitz (or $\mathcal{C}^1$ for smooth $k$) vector field on $S_g \smallsetminus \Sigma$, such that $(x,\beta) \in (S_g \smallsetminus \Sigma) \times ]-\lambda + \lambda^{-2}, 0] \mapsto v^s_\beta(x)$ is continuous.

The second difference concerns Proposition~\ref{prop:commutation_relation_and_stability_of_K}. The commutation relation between $f$ and $h_t$ still holds, but $K$ is no longer $h_t$-invariant -- this is due to the fact that each branch of stable manifold of each $\sigma \in \Sigma$ belongs to $K$ and is parametrized by $h_t$, thus some trajectories in $K$ end in finite time.

Nonetheless, analogue of Proposition~\ref{prop:stable_leaf_is_flow_trajectory} and Theorems~\ref{thm:K_connected} and \ref{thm:f_transitive_on_K} hold. More precisely, only the statement of Theorem~\ref{thm:f_transitive_on_K} has to be modified: $f : K \smallsetminus \Sigma \to K \smallsetminus \Sigma$ is topologically transitive for the trace topology of $S_g$ on $K \smallsetminus \Sigma$. Proofs do not change.

Concerning Section~\ref{sect:f_is_mixing}, since the study of GIET and IET already deal with the singularities of the flows at $\Sigma$, the results concerning the map $T$ and $T_0$ have analogous counterparts. Here the map segments $\gamma$ and $\gamma_0$ are contained in the horizontal leaf containing $\theta$. In particular there is a subset $\Omega \subset \gamma$, attractor for positive and negative iterates of $T$, such that $T|_\Omega$ is minimal, and $T$ is uniquely ergodic of unique invariant measure $\nu$ supported by $\Omega$. The relation $\Omega = \gamma \cap K$ still holds with change in the proof. Since the measure $\nu$ has no atoms -- otherwise, $T_0$ would have periodic orbits, contradicting the minimality of $T_0$ -- we get that $\nu(\Omega) = \nu(\Omega \smallsetminus S(\infty)) = 1$. Furthermore, since $\Omega \smallsetminus S(\infty) = \gamma \cap (K \smallsetminus \cup_{i \in \{ 1,2 \}} W^{ss}(p_i))$, and $K \smallsetminus \cup_{i \in \{ 1,2 \}} W^{ss}(p_i)$ is $h_t$-invariant, we get that $\mu = C \nu \otimes Leb$ is a $h_t$-invariant probability measure (for some $C>0$), with support $\overline{K \smallsetminus \cup_{i \in \{ 1,2 \}} W^{ss}(p_i)}= K$. 

Finally, $h_t$ is uniquely ergodic, and $f$ is mixing with respect to $\mu$, as in the previous section.

\begin{figure}
\begin{center}
\raisebox{-0.5\height}{\includegraphics[width=0.4\textwidth]{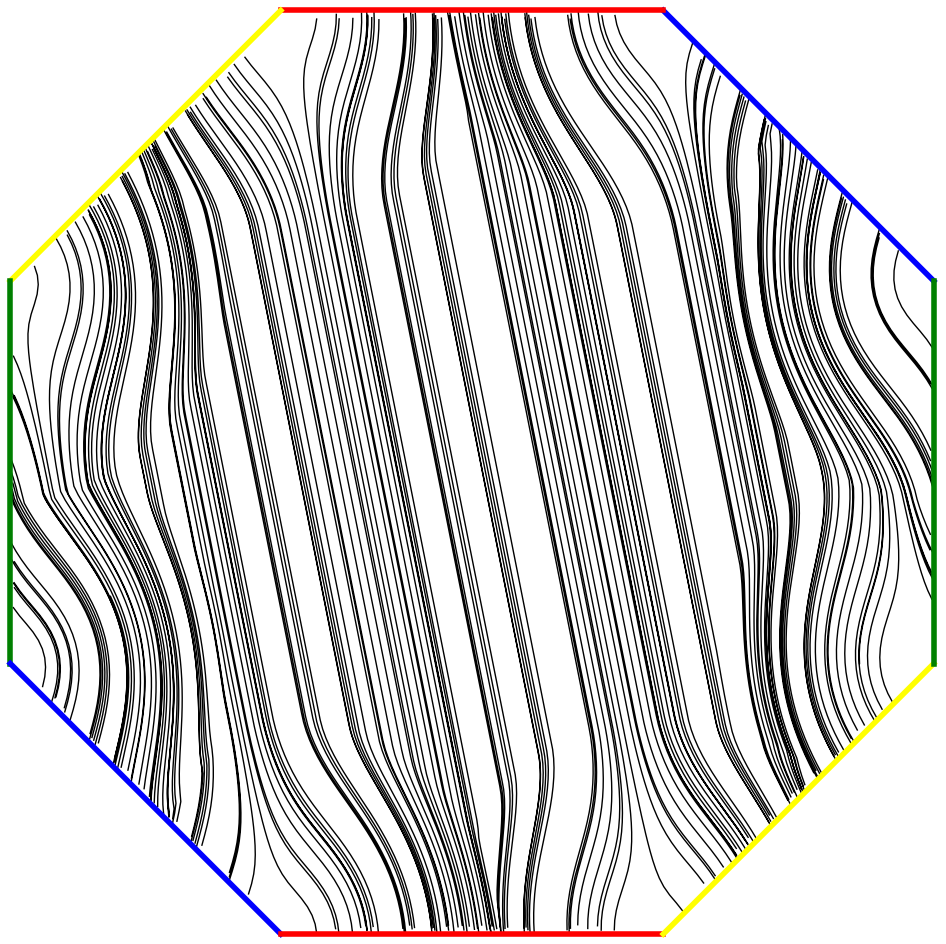}} \hspace{1cm}
\raisebox{-0.5\height}{\includegraphics[width=0.4\textwidth]{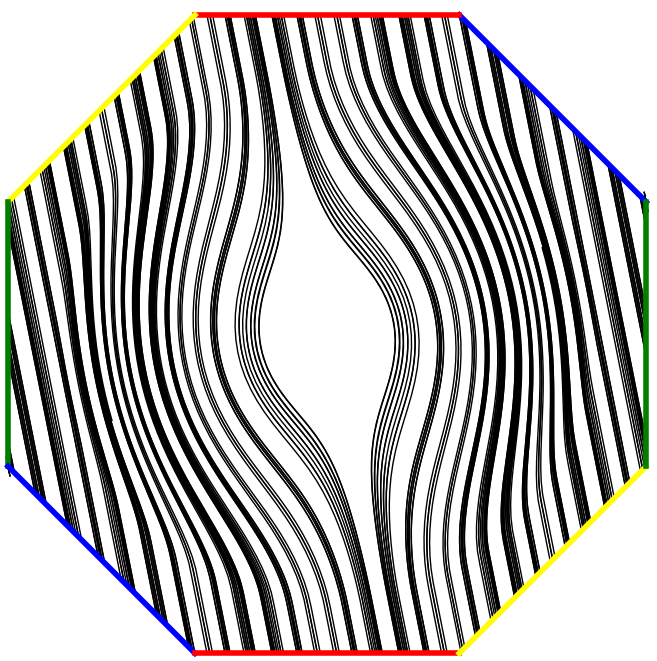}}
\end{center}
\caption{\label{fig:oct_pert_center} Numerical representations of the set $K$ for a perturbation of a pseudo-Anosov homeomorphism on a genus two surface. \textsc{Right:} perturbation at the unique conical point. \textsc{Left:} perturbation at a regular fixed point (the center of the octagon).}
\end{figure}

\section{The measure $\mu$}

In this last section, using extensively Bowen and Ruelle's work \cite{bowen2008equilibrium, Ruelle1976measure}, we prove that $\mu$ is the unique SRB-like measure of $f^{-1}$, and that correlations decrease exponentially fast for $\mathcal{C}^1$ observables compactly supported away from $\Sigma$, in the case where $f$ is constructed as in Section~\ref{sect:construction_and_attractor}. Finally, using the maximizing property associated with SRB measure, we compute the entropy of $f$ with respect to $\mu$. We also ask whether the result on the Ruelle spectrum of a linear pseudo-Anosov by Faure, Gou\"ezel and Lanneau \cite{faure2019ruelle}, and the asymptotic expansion for ergodic integral of the Giulietti--Liverani flow proved by Forni \cite{Forni2020equidistribution}, can be adapted to the settings of the present paper.

We used the term ``SRB-like" instead of just ``SRB" because SRB measure are only defined for $\mathcal{C}^2$ (or $\mathcal{C}^{1+ \alpha}$) diffeomorphisms, but the above map $f$ is only continuous at conical points. Nonetheless, we show that $\mu$ is the unique SRB measure associated to $f^{-1} \vert_{S_g \smallsetminus \Sigma}$ and that the usual definitions of SRB measure extend to $f^{-1}$. We will therefore refer to SRB measure in the rest of this section instead of ``SRB-like" measure.

For now on, we assume that $f$ is a $\mathcal{C}^2$ diffeomorphism away from $\Sigma$, which can be achieved by choosing a $\mathcal{C}^2$ bump function $k$. Such a bump function $k$ is also assumed to be $\mathcal{C}^2$.

\subsection{SRB measure and entropy of $f^{-1}$}

Sinai--Ruelle--Bowen measures are particular invariant measures of $\mathcal{C}^2$ transformations. See \cite{young2002srb} for a survey about these measures and which dynamical systems have them. 

The problem here is that $f$ and $f^{-1}$ are smooth only away from conical points, where they are only continuous. Still, $S_g \smallsetminus \Sigma$ is an invariant set on which $f^{-1}$ is a $\mathcal{C}^2$ diffeomorphism. Furthermore, $K$ is an Axiom A attractor for $f^{-1}$, in the sense that $K$ is locally maximal, $f^{-1}\vert_K$ is uniformly hyperbolic and $f^{-1} \vert_K$ is topologically transitive. Notice that $K$ is connected.

By \cite[Theorem 1.5]{Ruelle1976measure}, there exists a unique SRB measure $\mu_K$ supported by $K$, maximizing $h_{\nu}(f^{-1}\vert_{S_g \smallsetminus \Sigma}) + \nu( - \log \det \mathrm{d}f^{-1}\vert_{E^s} )$ -- and the maximum is equal to $0$.

\begin{theorem}\label{thm:SRB_measure}
If $W$ is a curve of finite length contained in $W^{ss}(p)$ and containing $p$, where $p$ is some hyperbolic fixed point $p^{\sigma}_i$ of $f$, and $\nu_W$ is a measure on $W$ with bounded Radon-Nikodym derivative with respect to the measure induce by the Riemann metric on $W$, then $\mu = \lim\limits_{n \to \infty} (f^{-n})_* \nu_W$.

In particular, $\mu = \lim\limits_{N \to \infty} \frac{1}{N} \sum\limits_{n=0}^{N-1} f^{-n}_* \nu_W$ and according to \cite{young2002srb}, $\mu$ is a SRB measure for $f^{-1}\vert_{S_g \smallsetminus \Sigma}$. Therefore, by uniqueness, $\mu = \mu_K$.
\end{theorem}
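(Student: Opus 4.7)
The plan is to exploit the one-dimensional parametrisation of $W^{ss}(p)$ by the flow $h_t$, the renormalisation identity between $f$ and $h_t$, and the unique ergodicity of $h_t$ established in Theorem~\ref{thm:unique_ergodicité_(h_t)}. By Proposition~\ref{prop:stable_leaf_is_flow_trajectory}, $W^{ss}(p) = h_{\mathbbm{R}}(p)$, so the finite-length arc $W$ can be written as $W = \{ h_s(p) : s \in I \}$ for a bounded interval $I \subset \mathbbm{R}$ with $0 \in I$. The arc-length element along this parametrisation is $\| v^s(h_s(p)) \|\, \D s$; because $v^s$ is continuous on $K$ and its $e_h$-component is constantly $1$, $\|v^s\|$ is bounded above and below on the compact $K$. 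Hence the hypothesis on $\nu_W$ is equivalent to $\D \nu_W = \psi(s)\, \D s$ for some bounded nonnegative function $\psi$ on $I$.

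I would then combine the renormalisation relation $f^{-n} \circ h_s = h_{\lambda^n s} \circ f^{-n}$ with $f(p) = p$ to obtain, for every $\varphi \in \mathcal{C}^0(S_g)$,
\[
\int \varphi\, \D\bigl( (f^{-n})_* \nu_W \bigr) = \int_I \varphi(h_{\lambda^n s}(p))\, \psi(s)\, \D s = \lambda^{-n} \int_{\lambda^n I} \varphi(h_t(p))\, \psi(\lambda^{-n} t)\, \D t,
\]
after the change of variables $t = \lambda^n s$. The core step is to show that this converges to $\bigl( \int_I \psi\, \D s \bigr) \int \varphi\, \D\mu = \nu_W(W)\, \int \varphi\, \D\mu$. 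Assuming first that $\psi$ is continuous, I would partition $I$ into intervals $I_j$ of length $\delta$ on which $\psi$ oscillates by less than $\varepsilon$; on each stretched interval $\lambda^n I_j$, whose length $\lambda^n \delta$ tends to infinity, the uniform version of unique ergodicity of $h_t$ gives $\lambda^{-n} \int_{\lambda^n I_j} \varphi \circ h_t(p)\, \D t \to \delta \int \varphi\, \D\mu$; summing and letting $\delta \to 0$ concludes the continuous case. The extension to bounded $\psi$ is by $L^1$-approximation by continuous functions, the error being controlled by $\|\varphi\|_\infty$ uniformly in $n$.

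This gives $(f^{-n})_* \nu_W \to \nu_W(W)\, \mu$ weakly, hence, under the implicit normalisation $\nu_W(W)=1$, the claimed convergence; the Ces\`aro identity follows immediately. Young's survey \cite{young2002srb} characterises SRB measures for hyperbolic attractors of $f^{-1}$ as precisely such Ces\`aro limits of absolutely continuous measures on pieces of unstable manifolds of $f^{-1}$ (i.e.\ stable manifolds of $f$), so $\mu$ is an SRB measure for $f^{-1}|_{S_g \smallsetminus \Sigma}$. The uniqueness statement \cite[Theorem~1.5]{Ruelle1976measure} applied to the Axiom~A attractor $K$ (Theorem~\ref{thm:axiom_A}) then identifies $\mu$ with $\mu_K$.

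The main obstacle is the interchange of limits in the core step: one must first send $n \to \infty$ at fixed partition scale $\delta$ so that unique ergodicity can be applied to each stretched sub-interval $\lambda^n I_j$, then send $\delta \to 0$, and finally deal with the mere boundedness (rather than continuity) of $\psi$. The hypothesis $p \in W$ is essential throughout, because it is what allows $f^{-n}$ to be removed from inside $h_{\lambda^n s}(f^{-n}(p))$ via $f^{-n}(p)=p$; without a fixed basepoint, the $h_t$-orbit on which one would apply unique ergodicity would itself depend on $n$ and the argument would break down.
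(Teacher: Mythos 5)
Your argument is correct, but it is genuinely different from the paper's. The paper does not parametrize $W$ by flow time at all: it embeds $W$ in a slightly longer arc $\tilde W \subset W^{ss}(p)$, writes $\D\nu_W = \rho\,\D\tilde\nu$ against arc length, and uses the commutation relation in the form $(h_t)_*\bigl(f^{-n}_*\nu_W\bigr) = f^{-n}_*\bigl((h_{\lambda^{-n}t})_*\nu_W\bigr)$ together with the $\mathcal{C}^1$ regularity of $h_t$ (Corollary~\ref{thm:ht_C1}, so that $(h_{\lambda^{-n}t})_*\nu_W$ has density $\tfrac{\rho}{\mathrm{Jac}\,h_{\lambda^{-n}t}}\circ h_{-\lambda^{-n}t}$ and converges to $\nu_W$ in total variation) to show that the pushforwards $f^{-n}_*\nu_W$ are asymptotically $h_t$-invariant; unique ergodicity then forces every weak-$*$ subsequential limit to be $\mu$. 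You instead write $W=\{h_s(p):s\in I\}$, use $f^{-n}\circ h_s = h_{\lambda^n s}\circ f^{-n}$ and $f^{-n}(p)=p$ to turn $(f^{-n})_*\nu_W(\varphi)$ into a weighted Birkhoff average of $\varphi$ along the single orbit of $p$ over the stretched interval $\lambda^n I$, and conclude by the uniform convergence of time averages for the uniquely ergodic flow on the compact invariant set $K$. Your route buys a more explicit, quantitative argument that needs no Jacobian of $h_t$ (Lipschitz $v^s$ suffices, so the $\mathcal{C}^2$ assumption on $k$ is not used here) and no subsequence extraction; the paper's route is softer, needing only that small flow displacements move $\nu_W$ continuously in total variation, at the price of the $\mathcal{C}^1$ flow. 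One small remark: your closing claim that $p\in W$ is essential overstates matters — all you really use is $W\subset W^{ss}(p)=h_{\mathbbm{R}}(p)$ and that $p$ is fixed, so $0\in I$ plays no role; also, as in the paper, the limit is $\nu_W(W)\,\mu$, so the normalisation $\nu_W(W)=1$ should be made explicit.
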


\begin{proof}
Let $W \subset \tilde{W} \subset W^{ss}(p)$ be a strictly longer curve than $W$. Let $\tilde{\nu}$ be the measure on $\tilde{W}$ induced by the Riemann metric. By assumption there exists a bounded function $\rho \geqslant 0$ such that $\mathrm{d}\nu_W = \rho \, \mathrm{d}\tilde{\nu}$. If needed, $\rho$ is implicitly extended by $0$.

Since $k$ is assumed to be $\mathcal{C}^2$, by Theorem~\ref{thm:vs_C1}, $h_t$ is a $\mathcal{C}^1$ flow. Therefore, for small enough $t$, $(h_t)_* \nu_W$ is supported by $\tilde{W}$ and $$ \mathrm{d}((h_t)_*\nu_W) = \frac{\rho}{\mathrm{Jac}\, h_t} \circ h_{-t} \, \mathrm{d}\tilde{\nu},$$
Where $\mathrm{Jac \, h_t}$ is the Jacobian determinant of the time $t$ of the flow. 
Therefore, if $\varphi$ is a continuous function on $S_g$, then for all small enough $t$,
\begin{align*}
|(h_t)_*(f^{-n}_* \nu_W)  - (f^{-n}_* \nu_W) |(\varphi) 
&= 
|f^{-n}_*( (h_{\lambda^{-n}t})_* \nu_W - \nu_W)|(\varphi) \\
&\leqslant
|\varphi|_{\infty} \int_{\tilde{W}} \left| \frac{\rho}{\mathrm{Jac}\, h_{\lambda^{-n}t}}\circ h_{-\lambda^{-n}t} - \rho \right| \, \mathrm{d}\tilde{\nu},
\end{align*}
which converges, by dominated converge, to zero as $n$ goes to infinity. Therefore, all subsequential limits of $f^{-n}_* \nu_W$ are $h_t$-invariant. By unique ergodicity of $h_t$, all subsequential limits of $f^{-n}_* \nu_W$ must coincide with $\mu$. Therefore $f^{-n}_* \nu_W$ converges to $\mu$.
\end{proof}

We can now compute the entropy of $f$ with respect to $\mu$.

\begin{theorem}
The entropy $h_{\mu}(f)$ with respect to $\mu$ is equal to $\log(\lambda)$.
\end{theorem}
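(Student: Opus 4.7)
The plan is to compute $h_{\mu}(f^{-1})$ via Pesin's entropy formula applied to the SRB measure $\mu$ of $f^{-1}$, and then invoke the general identity $h_{\mu}(f) = h_{\mu}(f^{-1})$.

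First, since $\mu$ has just been identified as the SRB measure of the $\mathcal{C}^2$ Axiom~A attractor $(K, f^{-1})$ on $U = S_g \smallsetminus \overline{\Sigma^{\varepsilon}}$ (Theorem~\ref{thm:axiom_A} together with the preceding theorem), the Pesin entropy formula of \cite{Ruelle1976measure} yields
\[
h_{\mu}(f^{-1}) \;=\; \int_K \log\bigl|\mathrm{d}f^{-1}\big|_{E^u_{f^{-1}}}\bigr|\,\mathrm{d}\mu,
\]
the integral of the unique positive Lyapunov exponent of $f^{-1}$ (this is the one-dimensional Pesin formula, equivalent to the characterization $h_{\mu}(f^{-1}) = -\int \log|\det \mathrm{d}f^{-1}|_{E^s_{f^{-1}}}|\,\mathrm{d}\mu$ coming from Ruelle's variational principle).

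Second, I would compute the integrand explicitly using the defining relation of $v^s$. Since $E^u_{f^{-1}} = E^s_f = \mathbb{R}v^s$ and $\mathrm{d}f_x \cdot v^s(x) = \lambda^{-1} v^s(f(x))$ (Theorem~\ref{thm:K_is_hyperbolic}), applying $\mathrm{d}f^{-1}_{f(x)}$ to both sides gives $\mathrm{d}f^{-1}_{f(x)} \cdot v^s(f(x)) = \lambda\, v^s(x)$. Measured in the Riemannian norm, the expansion factor of $\mathrm{d}f^{-1}_y$ on the one-dimensional subspace $\mathbb{R}v^s(y)$ is therefore $\lambda\cdot \|v^s(f^{-1}(y))\|/\|v^s(y)\|$, so that
\[
\log\bigl|\mathrm{d}f^{-1}_y\big|_{E^u_{f^{-1}}}\bigr| \;=\; \log \lambda \;+\; \log\|v^s\circ f^{-1}\|(y) \;-\; \log\|v^s\|(y).
\]

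Third, since $v^s$ is Lipschitz continuous on $S_g\smallsetminus\Sigma$ by Theorem~\ref{thm:vs_lipschitz} and non-vanishing (its $e_h$-component is identically $1$), the function $\log\|v^s\|$ is continuous and bounded on the compact set $K$. Consequently $\log\|v^s\circ f^{-1}\| - \log\|v^s\|$ is a continuous coboundary whose integral against the $f^{-1}$-invariant measure $\mu$ vanishes, leaving $h_{\mu}(f^{-1}) = \log\lambda$. Since metric entropy is preserved under inversion, $h_{\mu}(f) = h_{\mu}(f^{-1}) = \log\lambda$.

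The only subtle point will be justifying the invocation of Pesin's formula despite $f$ failing to be differentiable on $\Sigma$; this is resolved by performing the whole argument on the $f^{-1}$-invariant open set $U = S_g\smallsetminus\overline{\Sigma^\varepsilon}$, on which $f^{-1}$ is a genuine $\mathcal{C}^2$ diffeomorphism and $K$ is a bona fide $\mathcal{C}^2$ Axiom~A attractor in the sense of Ruelle, as already noted in Theorem~\ref{thm:axiom_A}.
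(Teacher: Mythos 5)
Your proposal is correct and follows essentially the same route as the paper: invoke Ruelle's variational/pressure-zero characterization of the SRB measure of the Axiom A attractor $(K,f^{-1})$, observe via $\mathrm{d}f\,v^s=\lambda^{-1}v^s\circ f$ that the unstable Jacobian of $f^{-1}$ along $\mathbb{R}v^s$ is $\lambda$ (up to the norm factor $\|v^s\circ f^{-1}\|/\|v^s\|$, a coboundary you rightly note integrates to zero, a point the paper leaves implicit by working in the trivialization by $v^s$), and conclude with $h_{\mu}(f)=h_{\mu}(f^{-1})$, justified because $\mu$ is supported on $K$ away from $\Sigma$.
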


\begin{proof}
It follows from the fact that $\mathrm{d}f \, v^s = \lambda^{-1} v^s \circ f$ that $\mathrm{d}f^{-1}\vert_{E^s}$ is constant equal to $\lambda$ on $K$. Therefore $h_{\mu}(f^{-1}\vert_{S_g \smallsetminus \Sigma}) = \log( \lambda)$. Now, since $\Sigma \cap K = \emptyset$, we get that $h_{\mu}(f)= h_{\mu}(f^{-1}) = \log(\lambda)$.
\end{proof}

Finally, remark that since the nonwandering set of $f$ is $K \cup \Sigma$ and since we can extend by continuity $\mathrm{d}f^{-1}\vert_{E^s}$ at each $\sigma$ in $\Sigma$ by $\lambda^{-1}(\lambda + \beta_{\sigma})<1$, the measure $\mu$ is still the unique measure maximizing $h_{\nu}(f^{-1}) + \nu( - \log \det \mathrm{d}f^{-1}\vert_{E^s} )$ for $\nu$ ranging over the set of $f$-invariant measures.

\subsection{Bernoulli and exponential mixing}

Using the careful analysis over Markov partition done by Ruelle in \cite{Ruelle1976measure}, we are able to deduce that $(f,\mu)$ is isomorphic to a Bernoulli shift and that the correlations decrease exponentially fast for $\mathcal{C}^1$ observables supported away from $\Sigma$. 

\begin{theorem}
The system $(f,\mu)$ is isomorphic to a Bernoulli shift.
\end{theorem}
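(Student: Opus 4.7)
The plan is to follow the classical Bowen--Ruelle--Ornstein route for Axiom A attractors carrying an SRB measure. By Theorem~\ref{thm:axiom_A}, the set $K$ is an Axiom A attractor for the restriction of $f^{-1}$ to the open set $U = S_g \smallsetminus \overline{\Sigma^{\varepsilon}}$, on which $f^{-1}$ is a $\mathcal{C}^2$ diffeomorphism. Moreover $\mu$ has already been identified as the unique SRB measure supported on $K$, and $f$ (hence $f^{-1}$) is topologically transitive and mixing with respect to $\mu$ on $K$. Since $K \cap \Sigma = \emptyset$, the lack of smoothness of $f$ at the conical points plays no role: all arguments can be run inside the $\mathcal{C}^2$ Axiom A system $(f^{-1}|_U,K)$.

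First I would invoke the existence of a Markov partition for the restriction $f^{-1}\vert_K$: this is Bowen's classical construction (see \cite{Ruelle1976measure}), which applies verbatim to any topologically transitive basic set of a $\mathcal{C}^2$ Axiom A diffeomorphism. It produces a finite family of proper rectangles whose symbolic coding yields a continuous, finite-to-one, almost everywhere bijective factor map $\pi : \Sigma_A \to K$ from a two-sided subshift of finite type $(\Sigma_A,\sigma)$, conjugating $\sigma$ with $f^{-1}\vert_K$. The potential $\phi^u(x) = -\log \det \mathrm{d}_x f^{-1}\vert_{E^s}$, which is Hölder continuous on $K$ by Theorems~\ref{thm:vs_lipschitz} and~\ref{thm:vs_C1} together with the $\mathcal{C}^2$ regularity of $f^{-1}$, lifts to a Hölder potential $\phi^u \circ \pi$ on $\Sigma_A$, and the SRB measure $\mu$ lifts to the unique equilibrium state (Gibbs measure) $\tilde\mu$ for $\phi^u \circ \pi$.

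Next I would verify the hypothesis needed to apply the Ornstein--Bowen theorem (\cite[Theorem 4.1]{bowen2008equilibrium}): the subshift $(\Sigma_A,\sigma,\tilde\mu)$ is topologically mixing. This follows from the fact that $(f^{-1}|_K,\mu)$ itself is mixing — a property transported from the mixing of $(f,\mu)$ established in the corollary to Theorem~\ref{thm:unique_ergodicité_(h_t)} — together with the standard fact that the topological mixing of a basic set passes to any of its Markov symbolic models (possibly after refining the partition to remove periodic decompositions; since $K$ is connected by Theorem~\ref{thm:K_connected}, no such refinement is needed and the associated matrix $A$ is aperiodic). Bowen's theorem then yields that $(\Sigma_A,\sigma,\tilde\mu)$ is measure-theoretically isomorphic to a Bernoulli shift. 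Pushing this isomorphism down by $\pi$ (which is a measurable isomorphism because it is injective off a set of $\tilde\mu$-measure zero) shows that $(f^{-1}\vert_K,\mu)$ is Bernoulli; and since the Bernoulli property is invariant under taking the measurable inverse of a measure-preserving transformation, $(f,\mu)$ is Bernoulli as well.

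The main obstacle is strictly bookkeeping: one has to check that Ruelle's framework, which is stated for globally $\mathcal{C}^2$ Axiom A diffeomorphisms of closed manifolds, applies to our $f^{-1}|_U$ which is only a $\mathcal{C}^2$ diffeomorphism of the open manifold $U$. This is not a genuine problem, because the whole construction (Markov partitions, Gibbs states, Ornstein theory) only uses the dynamics on and in a neighbourhood of the hyperbolic set $K$, and $K$ sits in the interior of $U$ at positive distance from $\Sigma$ thanks to Proposition~\ref{prop:existence_ball_in_basin}. Once this localisation is made explicit, the Bernoulli property follows without further work.
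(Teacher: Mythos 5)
Your proposal is correct and follows essentially the same route as the paper: the paper's proof is simply the citation of Ruelle's Theorem 1.5 applied to the $\mathcal{C}^2$ Axiom A attractor $(f^{-1}|_U,K)$ with its unique SRB measure $\mu$, and your Markov-partition / Gibbs-state / Ornstein--Bowen chain is exactly the machinery packaged inside that citation. The two points you make explicit -- localisation away from $\Sigma$ (legitimate since $K$ is at positive distance from $\Sigma$) and aperiodicity of the symbolic model via connectedness of $K$ (or, equivalently, via mixing of $\mu$, which has full support in $K$) -- are precisely what the paper leaves implicit.
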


\begin{theorem}
There exist constants $0< \theta < 1$ and $C >0$ such that for all $\mathcal{C}^1$ observables $\varphi$ and $\psi$ compactly supported away from $\Sigma$, 
$$|\mu(\varphi \circ f^{-n} \, \psi) - \mu(\varphi) \mu(\psi)| < C ||\varphi||_{\mathcal{C}^1} ||\psi||_{\mathcal{C}^1} \theta^{-n}, \quad \forall n \geqslant 0.$$
\end{theorem}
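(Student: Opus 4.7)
The plan is to reduce to the classical Bowen--Ruelle--Sinai theorem on exponential mixing for SRB measures of topologically mixing Axiom A attractors. By Theorem~\ref{thm:axiom_A}, $K$ is an Axiom A attractor for the $\mathcal{C}^2$ diffeomorphism $f^{-1}: U \to U$, where $U = S_g \smallsetminus \overline{\Sigma^\varepsilon}$; moreover, $K$ is connected by Theorem~\ref{thm:K_connected}, and $f^{-1}|_K$ is topologically mixing since $f$ is $\mu$-mixing (and conjugating to $f^{-1}$ preserves the mixing property). By the previous theorem, $\mu$ is the SRB measure for this attractor.

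First I would invoke Bowen's construction to obtain a Markov partition of $K$, yielding a H\"older semi-conjugacy $\pi : \Sigma_A \to K$ with a topologically mixing subshift of finite type $(\Sigma_A, \sigma)$. Under this coding, $\mu$ corresponds to the Gibbs equilibrium state $\tilde\mu$ associated to the H\"older potential $\tilde\phi = -\log|\det \mathrm{d}f^{-1}|_{E^s}| \circ \pi$ on $\Sigma_A$; H\"older regularity of $\tilde\phi$ follows from the $\mathcal{C}^2$ regularity of $f^{-1}$ on $U$ together with the H\"older regularity of the stable distribution $E^s$ on $K$. The Ruelle--Perron--Frobenius spectral gap theorem for the associated transfer operator then produces constants $\theta \in (0,1)$ and $C_\alpha > 0$ such that
\begin{equation*}
\left| \int \tilde\varphi \, (\tilde\psi \circ \sigma^n) \, \mathrm{d}\tilde\mu - \tilde\mu(\tilde\varphi) \, \tilde\mu(\tilde\psi) \right| \leqslant C_\alpha \, \|\tilde\varphi\|_{C^\alpha} \, \|\tilde\psi\|_{C^\alpha} \, \theta^n
\end{equation*}
for all $\alpha$-H\"older functions $\tilde\varphi, \tilde\psi$ on $\Sigma_A$, with $\alpha$ a fixed exponent determined by the coding.

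To translate back to $S_g$, I would observe that any $\mathcal{C}^1$ observable $\varphi$ on $S_g$ with compact support disjoint from $\Sigma$ restricts to a Lipschitz function on $K$, so that $\varphi \circ \pi$ is $\alpha$-H\"older on $\Sigma_A$ with norm controlled by $\|\varphi\|_{\mathcal{C}^1}$ (up to a multiplicative constant depending only on the H\"older exponent of $\pi$). Since $\mu$ is supported on $K$ and $K$ is $f^{-1}$-invariant, the correlation $\mu(\varphi \circ f^{-n} \cdot \psi)$ equals the symbolic correlation $\int (\varphi \circ \pi) \cdot ((\psi \circ \pi) \circ \sigma^n) \, \mathrm{d}\tilde\mu$, and the desired inequality follows at once from the symbolic estimate.

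The main obstacle is verifying that Bowen's Markov partition construction applies cleanly in our setting: the attractor $K$ has the peculiar local structure of an interval times a Cantor set (Theorem~\ref{thm:f_mixing}(ii)), but this is exactly the class of hyperbolic attractors for which the construction was designed. One must choose the rectangles of the partition with diameter small enough so that they remain in $U$ where $f^{-1}$ is smooth, which is possible because $K$ lies at positive distance from $\Sigma$ (by Proposition~\ref{prop:existence_ball_in_basin}); and one must use topological mixing of $f^{-1}|_K$ to ensure that the subshift $\Sigma_A$ is itself topologically mixing, so that the spectral gap is genuine. Once these points are verified, the remainder is a routine application of Ruelle's thermodynamic formalism.
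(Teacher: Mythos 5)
Your proposal is correct and follows essentially the same route as the paper: the paper simply observes that $K$ is a connected, topologically mixing Axiom A attractor for the $\mathcal{C}^2$ map $f^{-1}$ away from $\Sigma$ with SRB measure $\mu$, and invokes Ruelle's Theorem 1.5 in \cite{Ruelle1976measure} directly. You have merely unpacked the black box of that citation (Markov partition, Gibbs state for $-\log\det \mathrm{d}f^{-1}\vert_{E^s}$, transfer-operator spectral gap, transport of H\"older/Lipschitz norms through the coding), which is consistent with the paper's argument.
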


The proofs of these two theorems directly follows from \cite[Theorem 1.5]{Ruelle1976measure}.

\subsection{What about the Ruelle spectrum?}

In \cite{faure2019ruelle}, Faure, Gou\"ezel and Lanneau proved that for any orientation preserving linear pseudo-Anosov map $\varphi$ on a surface $S_g$ of genus $g$, the Ruelle spectrum can be computed explicitly. More precisely, if $\lambda > 1$ is the expansion factor of $\varphi$ and $\lambda^{-1},\, \lambda,\, \mu_1, \ldots,\, \mu_{2g-2}$ is the spectrum of $\varphi^*$ --~where $\varphi^*$ is the natural action of $\varphi$ on the first space of cohomology $H^1(S_g)$~-- then the Ruelle spectrum of $\varphi$ for $\mathcal{C}_c^{\infty}(S_g \smallsetminus \Sigma)$ observables is $\{ \lambda^{-n}\mu_i \mid 1 \leqslant i \leqslant 2g-2, \, n \geqslant 1 \}$. Furthermore, the multiplicity of $\lambda^{-n}\mu_i$ is $n$. In order to prove this result, the authors first show that $\lambda^{-n}\mu_i$ are indeed Ruelle resonances and then that there are no other Ruelle resonances. 

Since $f$ is, by construction, homotopic to such linear pseudo-Anosov map $\varphi$, the action on the cohomology is the same. One might expect that the Ruelle spectrum of $(f,\mu)$ is the same as the one of $\varphi$, up to a few modifications. 

The key ingredients in the first part of \cite{faure2019ruelle} --~where it is proved that $\lambda^{-n}\mu_i$ are Ruelle resonances~-- are the smoothness of the invariant foliations and the uniform contraction of the stable foliation. This particularities remain true in the case of the perturbation $f$. The argument then should carry over to the case of the specific derived from pseudo-Anosov maps studied in this paper.

However, the second part of \cite{faure2019ruelle} -- where it is proved that Ruelle resonances must be of the form $\lambda^{-n}\mu_i$ -- relies on many geometric considerations and also on the uniform dilation of the unstable foliation. Unfortunately this last assumption fails, by construction, in the case of $f$. 

\subsection{What about deviation from Ergodic Integral?} In \cite[Corollary 1.5]{Forni2020equidistribution}, Forni proved an asymptotic expansion for the ergodic integrals of the Giulietti--Liverani flow \cite{GL_2019} on surface of genus $g \geqslant 2$. Because of all the common properties between the flow $h_t$ studied in the present paper and the Giulietti--Liverani flow, it seems reasonable that a similar formula should holds. However, it is not clear whether Forni's proof can be adapted in this setting.

\bibliography{biblioARPE}{}

\begin{thebibliography}{10}

\bibitem{athanassopoulos2015denjoy}
K.~Athanassopoulos.
\newblock Denjoy {$C^1$} diffeomorphisms of the circle and {M}c{D}uff's
  question.
\newblock {\em Expo. Math.}, 33(1):48--66, 2015.

\bibitem{Barge11classification}
M.~Barge and B.~F. Martensen.
\newblock Classification of expansive attractors on surfaces.
\newblock {\em Ergodic Theory Dynam. Systems}, 31(6):1619--1639, 2011.

\bibitem{Bonatti1998}
C.~Bonatti and R.~Langevin.
\newblock Diff\'{e}omorphismes de {S}male des surfaces.
\newblock {\em Ast\'{e}risque}, (250):viii+235, 1998.
\newblock With the collaboration of E. Jeandenans.

\bibitem{bowen2008equilibrium}
R.~E. Bowen.
\newblock {\em Equilibrium states and the ergodic theory of {A}nosov
  diffeomorphisms}, volume 470.
\newblock Springer Science \& Business Media, 2008.

\bibitem{hubert2010persistence}
X.~Bressaud, P.~Hubert, and A.~Maass.
\newblock Persistence of wandering intervals in self-similar affine interval
  exchange transformations.
\newblock {\em Ergodic Theory Dynam. Systems}, 30(3):665--686, 2010.

\bibitem{Butterley2020parabolic}
O.~Butterley and L.~D. Simonelli.
\newblock Parabolic flows renormalized by partially hyperbolic maps.
\newblock {\em Boll. Unione Mat. Ital.}, 13(3):341--360, 2020.

\bibitem{camelier1997aiet}
R.~Camelier and C.~Gutierrez.
\newblock Affine interval exchange transformations with wandering intervals.
\newblock {\em Ergodic Theory Dynam. Systems}, 17(6):1315--1338, 1997.

\bibitem{coudene2006pictures}
Y.~Coud\`ene.
\newblock Pictures of hyperbolic dynamical systems.
\newblock {\em Notices of the AMS}, 53(1), 2006.

\bibitem{coudene2013book}
Y.~Coud\`ene.
\newblock {\em Ergodic theory and dynamical systems}.
\newblock Universitext. Springer-Verlag London, Ltd., London; EDP Sciences,
  [Les Ulis], 2016.
\newblock Translated from the 2013 French original [ MR3184308] by Reinie
  Ern\'{e}.

\bibitem{farb2011primer}
B.~Farb and D.~Margalit.
\newblock {\em A primer on mapping class groups (pms-49)}.
\newblock Princeton University Press, 2011.

\bibitem{FLP_english}
A.~Fathi, F.~Laudenbach, and V.~Po\'{e}naru.
\newblock {\em Thurston's work on surfaces}, volume~48 of {\em Mathematical
  Notes}.
\newblock Princeton University Press, Princeton, NJ, 2012.
\newblock Translated from the 1979 French original by Djun M. Kim and Dan
  Margalit.

\bibitem{fathi1979travaux}
A.~Fathi, F.~Laudenbach, V.~Po{\'e}naru, et~al.
\newblock Travaux de {T}hurston sur les surfaces, volume 66-67 of
  {A}st{\'e}risque.
\newblock {\em Soci{\'e}t{\'e} Math{\'e}matique de France, Paris}, 1979.

\bibitem{faure2019ruelle}
F.~Faure, S.~Gou{\"e}zel, and E.~Lanneau.
\newblock Ruelle spectrum of linear pseudo-{A}nosov maps.
\newblock {\em Journal de l'{\'E}cole polytechnique-Math{\'e}matiques},
  6:811--877, 2019.

\bibitem{Forni2020equidistribution}
G.~Forni.
\newblock On the equidistribution of unstable curves for pseudo-{A}nosov
  diffeomorphisms of compact surfaces.
\newblock {\em Ergodic Theory Dynam. Systems}, 42(3):855--880, 2022.

\bibitem{forni2013introduction}
G.~Forni and C.~Matheus.
\newblock Introduction to {T}eichm\"{u}ller theory and its applications to
  dynamics of interval exchange transformations, flows on surfaces and
  billiards.
\newblock {\em J. Mod. Dyn.}, 8(3-4):271--436, 2014.

\bibitem{GL_2019}
P.~Giulietti and C.~Liverani.
\newblock Parabolic dynamics and anisotropic {B}anach spaces.
\newblock {\em J. Eur. Math. Soc. (JEMS)}, 21(9):2793--2858, 2019.

\bibitem{herman1979conjugaison}
M.-R. Herman.
\newblock Sur la conjugaison diff\'{e}rentiable des diff\'{e}omorphismes du
  cercle \`a des rotations.
\newblock {\em Inst. Hautes \'{E}tudes Sci. Publ. Math.}, (49):5--233, 1979.

\bibitem{hubbard2016teichmuller}
J.~H. Hubbard.
\newblock {\em Teichm{\"u}ller theory and applications to geometry, topology,
  and dynamics}.
\newblock 2016.

\bibitem{katok1997introduction}
A.~Katok and B.~Hasselblatt.
\newblock {\em Introduction to the modern theory of dynamical systems},
  volume~54.
\newblock Cambridge university press, 1997.

\bibitem{lanneau2017tell}
E.~Lanneau.
\newblock Tell me a pseudo-{A}nosov.
\newblock {\em Eur. Math. Soc. Newsl.}, (106):12--16, 2017.
\newblock Translated from the French [ MR3643215] by Fernando P. da Costa.

\bibitem{levitt1987feuilletages}
G.~Levitt.
\newblock La d\'{e}composition dynamique et la diff\'{e}rentiabilit\'{e} des
  feuilletages des surfaces.
\newblock {\em Ann. Inst. Fourier (Grenoble)}, 37(3):85--116, 1987.

\bibitem{MMY2010aiet}
S.~Marmi, P.~Moussa, and J.-C. Yoccoz.
\newblock Affine interval exchange maps with a wandering interval.
\newblock {\em Proc. Lond. Math. Soc. (3)}, 100(3):639--669, 2010.

\bibitem{Rodriguez06expansive}
F.~Rodriguez~Hertz and J.~Rodriguez~Hertz.
\newblock Expansive attractors on surfaces.
\newblock {\em Ergodic Theory Dynam. Systems}, 26(1):291--302, 2006.

\bibitem{Ruelle1976measure}
D.~Ruelle.
\newblock A measure associated with axiom-{A} attractors.
\newblock {\em Amer. J. Math.}, 98(3):619--654, 1976.

\bibitem{sinai2005weak}
Y.~G. Sinai and C.~Ulcigrai.
\newblock Weak mixing in interval exchange transformations of periodic type.
\newblock {\em Letters in Mathematical Physics}, 74(2):111--133, 2005.

\bibitem{smale1967differentiable}
S.~Smale.
\newblock Differentiable dynamical systems.
\newblock {\em Bulletin of the American mathematical Society}, 73(6):747--817,
  1967.

\bibitem{Williams70DA}
R.~F. Williams.
\newblock The {$``{\rm DA}''$} maps of {S}male and structural stability.
\newblock In {\em Global {A}nalysis ({P}roc. {S}ympos. {P}ure {M}ath., {V}ol.
  {XIV}, {B}erkeley, {C}alif., 1968)}, pages 329--334. Amer. Math. Soc.,
  Providence, R.I., 1970.

\bibitem{yoccoz2005echanges}
J.-C. Yoccoz.
\newblock Echanges d'intervalles.
\newblock \textit{Cours Coll{\`e}ge de France},
  \url{https://www.college-de-france.fr/media/jean-christophe-yoccoz/UPL8726_yoccoz05.pdf},
  2005.

\bibitem{yoccoz2010survey}
J.-C. Yoccoz.
\newblock Interval exchange maps and translation surfaces.
\newblock In {\em Homogeneous flows, moduli spaces and arithmetic}, volume~10
  of {\em Clay Math. Proc.}, pages 1--69. Amer. Math. Soc., Providence, RI,
  2010.

\bibitem{young2002srb}
L.-S. Young.
\newblock What are {SRB} measures, and which dynamical systems have them?
\newblock {\em Journal of Statistical Physics}, 108(5-6):733--754, 2002.

\bibitem{zorich:hal-00453397}
A.~Zorich.
\newblock {Flat surfaces}.
\newblock In {\em {Frontiers in Number Theory, Physics, and Geometry I}}, pages
  439--585. {Springer}, 2006.

\end{thebibliography}
\bibliographystyle{abbrv}

\end{document}